\newtheorem{theorem}{Theorem}[section]
\newtheorem{corollary}[theorem]{Corollary}
\newtheorem{proposition}[theorem]{Proposition}
\newtheorem{lemma}[theorem]{Lemma}
\newtheorem{remark}[theorem]{Remark}
\DeclareMathOperator{\lin}{lin}
\DeclareMathOperator{\spann}{span}
\DeclareMathOperator{\domain}{Dom}
\DeclareMathOperator{\support}{supp}
\DeclareMathOperator{\sign}{sign}
\def\e{e}
\def\W{\mathbf W}
\def\L{\mathbf L}
\def\a{\alpha}
\def\b{\beta}
\title[On derivatives and related objects for Fourier-Bessel expansions]
	{On derivatives, Riesz transforms and Sobolev spaces \\ for Fourier-Bessel expansions} 
\author[B.\ Langowski]{Bartosz Langowski}
\address{Bartosz Langowski, \newline
Indiana University, Department of Mathematics, \newline
831 East 3rd St., Bloomington, IN 47405, USA \newline
\indent and \newline
			Faculty of Pure and Applied Mathematics, 
      Wroc\l{}aw University of Science and Technology \newline
      Wyb.\ Wyspia\'nskiego 27,
      50--370 Wroc\l{}aw, Poland      
      }
\email{balango@iu.edu}
\author[A.\ Nowak]{Adam Nowak}
\address{Adam Nowak, \newline
			Institute of Mathematics,
      Polish Academy of Sciences, \newline
      \'Sniadeckich 8,
      00--656 Warsaw, Poland 
}
\email{adam.nowak@impan.pl}
\begin{document}


\begin{abstract}
We study the problem of an appropriate choice of derivatives associated with discrete Fourier-Bessel expansions.
We introduce a new so-called essential measure Fourier-Bessel setting, where the relevant derivative is
simply the ordinary derivative. Then we investigate Riesz transforms and Sobolev spaces in this context.
Our main results are $L^p$-boundedness of the Riesz transforms (even in a multi-dimensional situation)
and an isomorphism between the Sobolev and Fourier-Bessel potential spaces.
Moreover, throughout the paper we collect various comments concerning two other closely related
Fourier-Bessel situations that were considered earlier in the literature. We believe that our
observations shed some new light on analysis of Fourier-Bessel expansions.
\end{abstract}

\maketitle
\thispagestyle{empty}

\footnotetext{
\emph{2020 Mathematics Subject Classification:} primary 42C05; secondary 42C20, 42C30, 33C45.\\
\emph{Key words and phrases:} 
Fourier-Bessel expansions, derivative, Riesz transform, Sobolev space.

Research supported by the National Science Centre of Poland within the project OPUS 2017/27/B/ST1/01623.
}

\section{Introduction} \label{sec:intro}

Discrete Fourier-Bessel expansions have been present in the literature for a long time, see e.g.\ Watson's classic monograph
\cite[Chapter XVII]{watson} and references given there.
Harmonic analysis related to these expansions has been widely developed over many decades, and
a good deal of this activity falls on the 21st century. For instance, convergence of Fourier-Bessel series
in various senses was studied by Wing \cite{wing}, Hochstadt \cite{H}, Benedek and Panzone \cite{BePa1,BePa2,BePa3},
Markett \cite{Mar}, Guadalupe et al.\ \cite{GPRV}, Balodis and C\'ordoba \cite{BaCo}, Stempak \cite{St2},
and Ciaurri and Roncal \cite{CiRo1,CiRo15,CiRo3}. Transplantation problems related to Fourier-Bessel expansions were
investigated by Gilbert \cite{Gi}, Stempak \cite{St1}, and Ciaurri and Stempak \cite{CiSt1,CiSt2}.
Transference principles involving Fourier-Bessel expansions were developed by Betancor and Stempak \cite{BeSt1,BeSt2},
Betancor \cite{Bet1}, and Betancor et al.\ \cite{BFRS}. Conjugacy and Riesz transforms for Fourier-Bessel expansions
were considered by Muckenhoupt and Stein \cite{MuSt}, Ciaurri and Stempak \cite{CiSt3}, Nowak and Stempak \cite{NoSt1},
Ciaurri and Roncal \cite{CiRo4}, and Wr\'obel \cite{Wr}.
Fourier-Bessel heat kernel bounds were obtained by Nowak and Roncal \cite{NoRo1,NoRo2},
and Ma\l{}ecki et al.\ \cite{MSZ}. Further harmonic analysis topics in the context of Fourier-Bessel expansions
can be found in Ciaurri and Roncal \cite{CiRo2} (square functions), Nowak and Roncal \cite{NoRo3} (potential operators),
and Dziuba\'nski et al.\ \cite{DPRS} (Hardy spaces). A list of selected open questions in the Fourier-Bessel analysis
can be found in \cite{Bet}. These accounts are by no means complete.

The aim of the present paper is to advance knowledge about and understanding of Fourier-Bessel expansions and the related analysis.
We offer a new insight into several fundamental definitions existing in the literature on the subject.
Moreover, we introduce and study new objects, which seem to be philosophically more appropriate than their prototypes
considered earlier. The crucial observation that inspired this research is the following (see \cite[p.\,1340]{NoRo1} for
an earlier and more vague formulation).
\vspace{5pt}\\
\noindent \emph{A considerable part of the literature devoted to Fourier-Bessel expansions ignores in certain ways existence
of the right boundary of the interval $(0,1)$ where the expansions naturally live.}
\vspace{5pt}\\
\noindent
The problem pertains to Riesz transforms and the conjugacy scheme for Fourier-Bessel expansions since, roughly speaking,
the derivatives involved, hence also the emerging Riesz transforms, should ``recognize'' or even reflect presence of the right boundary.
This also has an influence on appropriate definitions of Sobolev spaces associated with Fourier-Bessel expansions.
Actually, such spaces were not considered before, and the reason is, apparently, the aforementioned issue.
Another aspect of analysis related to Fourier-Bessel expansions, where the right boundary was not taken into account
properly, are weighted estimates for various operators. The literature commonly assumes that a power weight in this context
has the form $w(x) = x^{a}$, while naturally it should be $w(x) = x^{a}(1-x)^{b}$. A similar remark
concerns also more general weights. For instance, instead of local $A_p$ weights (with locality referring to the left endpoint
of $(0,1)$) one should rather consider double local $A_p$ weights (cf.\ \cite{CNS})
with locality referring to both endpoints of $(0,1)$. All this affects optimality of various results.
A plausible explanation of this situation is that the case of discrete Fourier-Bessel expansions on the interval $(0,1)$
was, to an excessive extent, viewed through the perspective of continuous Fourier-Bessel expansions on $(0,\infty)$ (the context
of the Hankel transform), where there is no right boundary. For instance, the associated Laplacians in both situations
are the same as differential operators, but their factorizations in terms of suitable derivatives are not necessarily inherited
in a similar trivial way, contrary to what one can find in the literature.

The main achievements of this paper are the following.
\begin{itemize}
\item[(A1)]
We introduce new \emph{derivatives} associated with natural and Lebesgue measures Fourier-Bessel settings.
These derivatives are philosophically more appropriate than those used so far in the literature, since they take into
account the existence of the right boundary of $(0,1)$ and the behavior of the Fourier-Bessel systems at the right endpoint of $(0,1)$.
Moreover, they lead to Riesz transforms and a conjugacy scheme that are more consistent with many, if not practically all, other cases
of classical discrete orthogonal expansions, cf.\ \cite[Section 7]{NoSt1}. Furthermore, the new derivatives are suitable for defining
the associated Sobolev spaces, while the previously used derivatives are not.
\item[(A2)]
We derive qualitatively new orthogonal and complete systems called \emph{differentiated Fourier-Bessel systems}, which are defined
in terms of the standard oscillatory Bessel function $J_{\nu}$ and its successive zeros. Expansions with respect
to these new systems seem to be a kind of novelty in the circle of classical long known expansions based
on the Bessel function, like the series of Fourier-Bessel, Fourier-Dini, Fourier-Neumann, Kapteyn, and Schl\"omilch; see \cite{watson}.
\item[(A3)]
We introduce a new Fourier-Bessel setting related to another measure that we call \emph{essential measure}.
This framework turns out to be the most convenient for studying a number of fundamental harmonic analysis problems.
One of its most significant aspects is relative simplicity, since the associated derivative is just the usual derivative,
and the bottom eigenfunction is a constant function $\boldsymbol{1}$. Moreover, there is a probabilistic variant of this
setting, which may be regarded as a Fourier-Bessel counterpart of probabilistic contexts related to the classical orthogonal
systems of Hermite, Laguerre and Jacobi polynomials. Furthermore, as we shall explain in Remark \ref{rem:CZ}, the essential measure
Fourier-Bessel framework is the most effective one from the perspective of application of the Calder\'on-Zygmund operator
theory.
\item[(A4)]
We obtain $L^p$ estimates for vectorial Riesz transforms in the multi-dimensional essential measure Fourier-Bessel context with
an explicit constant independent of the dimension and of the parameter of type, and with explicit quantitative dependence on $p$.
Each of these aspects, i.e.\ dimensionless and parameterless constant, explicit dependence on $p$,
are novel compared to previous studies of Riesz transforms for Fourier-Bessel expansions, see e.g.\ \cite{CiSt3,CiRo4,Wr}.
\item[(A5)]
We define and study Sobolev space of the first order in the essential measure Fourier-Bessel setting. In particular, we
prove that the Sobolev space is isomorphic to the Fourier-Bessel potential space, which is an analogue of the celebrated
classical result of A.P.\, Calder\'on. According to our best knowledge, Sobolev spaces for Fourier-Bessel
expansions were not investigated so far, in contrast to many other situations of classic orthogonal expansions,
see e.g.\ \cite{BFRTT,BT1,BT2,GLLNU,La1,La2,La3}.
\end{itemize}

The paper is organized as follows (item (Sn) stands for Section n).
\begin{itemize}
\item[(S2)]
In this section
we first summarize the general notation used throughout. Then we present various facts and formulae connected with the
Bessel function $J_{\nu}$. In particular, we introduce auxiliary functions $R_n^{\nu}$, $n \ge 1$ (defined in terms
of $J_{\nu}$) and establish their basic properties.
\item[(S3)]
We describe briefly the well-known natural and Lebesgue measures Fourier-Bessel contexts. Then we present the new
essential measure Fourier-Bessel situation, including its probabilistic variant. For technical reasons, we also
invoke the Jacobi trigonometric Lebesgue measure setting scaled to the interval $(0,1)$.
\item[(S4)]
This section is devoted to derivatives in the situations mentioned in (S3) above. In particular, we recall the old derivatives,
which have been used so far, and introduce the new ones in the natural and Lebesgue measures Fourier-Bessel settings.
\item[(S5)]
We study Fourier-Bessel differentiated systems that emerge from the original Fourier-Bessel systems by differentiating with
the derivatives introduced in (S4). We describe the boundary behavior of these systems (Proposition \ref{prop:asd}) and
show that they form orthogonal bases in the corresponding $L^2$ spaces (Proposition \ref{prop:orth} and Theorem \ref{thm:L2dense}).
Here the most difficult issue is completeness of the systems. The reasoning is rather involved and essentially follows
the strategy from Hochstadt's work \cite{H}, though the details are largely independent of \cite{H}.
\item[(S6)]
We investigate Riesz transforms for Fourier-Bessel expansions. We focus mostly on the essential measure setting, where
we prove the results announced in (A4) above (Theorem \ref{thm:Riesz} and Theorem~\ref{thm:Rieszd}). This is done
by applying a general powerful machinery due to Wr\'obel \cite{wrobel}, which requires verification of several
technical conditions. Here the trickiest part, besides the completeness established earlier, is finding uniform
pointwise bounds for the essential measure Fourier-Bessel system and its differentiated counterpart (Lemma \ref{lem:ues}).
Still in this section, we also consider a modification of the essential measure Fourier-Bessel situation and get a result on
the associated Riesz transform (Theorem \ref{thm:Rieszm}). Finally, we comment on the natural and Lebesgue measures
Fourier-Bessel settings.
\item[(S7)]
We study heat semigroups associated with the original and differentiated Fourier-Bessel systems. We aim at obtaining maximal
theorems in the essential measure situation (Theorem \ref{thm:maxhess} and Corollary \ref{cor:map}), since these
results are needed to study Sobolev spaces in the next section. To get that, the key step is to establish a relation
between kernels of heat semigroups associated with differentiated Jacobi and Lebesgue measure Fourier-Bessel systems, where in the
former case sharp bounds of the kernel are known. This is inspired by the ideas from \cite{NoRo2}.
\item[(S8)]
We introduce Sobolev and potential spaces related to the essential measure Fourier-Bessel setting.
As the main result here, we prove that the spaces just mentioned are isomorphic as Banach spaces (Theorem \ref{thm:isoCal}),
as they should be according to a general philosophy. This isomorphism is of importance since, in particular, it provides
an effective way of verification whether a concrete function belongs to the potential space.
A noticeable technical obstacle in this section is showing that the subspace
spanned by the Fourier-Bessel system is dense in the Sobolev space (Proposition~\ref{prop:Sdense}). We finish
this section with some crucial observations concerning natural and Lebesgue measures Fourier-Bessel situations.
Namely, we infer that previously used derivatives are not suitable for defining the associated Sobolev spaces.
On the other hand, we indicate that our new derivatives are appropriate choices for this purpose.
\end{itemize}
For the reader's convenience, as an Appendix at the end
we provide a table summarizing basic notation in the contexts appearing in this paper.

\subsection*{Acknowledgment.}
The authors thank the anonymous referee for insightful comments and suggestions that helped them to improve the presentation.

\section{Preliminaries} \label{sec:pre}

In this section we first briefly describe the general notation used in the paper.
Then we present for further reference various facts and formulae connected with the Bessel function $J_{\nu}$.

\subsection{Notation} \label{ssec:not}

Throughout the paper we use fairly standard notation. In particular, $\mathbb{N} = \{0,1,2,\ldots\}$ is
the set of natural numbers, and $\mathbb{N}_+ = \{1,2,\ldots\}$ is its positive subset.
As usual, for $1 \le p \le \infty$, $p'$ denotes its conjugate exponent, $1/p+1/p' = 1$.
All $L^p$ spaces in this paper are over the interval $(0,1)$ or a product of such intervals,
so in our notation we skip $(0,1)$ or $(0,1)^d$ and indicate only the relevant
measure. The standard inner product in $L^2(d\mu)$ is denoted by $\langle \cdot, \cdot\rangle_{d\mu}$, where we
shall skip the subscript when $d\mu=dx$. We will also write $\langle f, g \rangle_{d\mu}$ for functions that
are not necessarily in $L^2(d\mu)$, but the integral defining the pairing converges.

We write $X \lesssim Y$ to indicate that $X \le CY$ with a positive constant $C$ independent
of significant quantities. We shall write $X \simeq Y$ when simultaneously $X \lesssim Y$ and $Y \lesssim X$.

\subsection{Facts and formulae concerning the Bessel function $J_{\nu}$} \label{sec:Bes}

The most standard Bessel function $J_{\nu}$ of the first kind and order $\nu$ (see e.g.\ \cite{watson})
will appear frequently in what follows. In all the occurrences below $\nu > -1$ and $J_{\nu}$ is considered
as a function on the positive half-line $(0,\infty)$. It is well known that $J_{\nu}$ is smooth on $(0,\infty)$
and has infinitely many oscillations and zeros there. We denote the successive positive zeros of $J_{\nu}$
by $\lambda_{n,\nu}$, $n \in \mathbb{N}_+$. In general, $J_{\nu}$ cannot be expressed directly via elementary
functions; this is possible if and only if $\nu$ is half-integer, but not integer. In particular,
\begin{equation} \label{specJ}
J_{-1/2}(z) = \sqrt{\frac{2}{\pi z}} \cos z, \qquad J_{1/2}(z) = \sqrt{\frac{2}{\pi z}} \sin z,
\qquad J_{3/2}(z) = \sqrt{\frac{2}{\pi z}} \frac{\sin z - z \cos z}{z}.
\end{equation}
The function $J_\nu$ has the series expansion
\begin{equation*}
J_\nu(z)=\Big(\frac{z}{2}\Big)^\nu\sum_{k=0}^{\infty}\frac{(-\frac{1}{4}z^2)^k}{k!\Gamma(\nu+k+1)},
\end{equation*}
from which it follows that 
\begin{equation}\label{J0}
J_\nu(z)=\frac{1}{\Gamma(\nu+1)}\Big(\frac{z}{2}\Big)^\nu+\mathcal{O}\big(z^{\nu+2}\big),\qquad z\rightarrow 0^+.
\end{equation}
For large arguments one has
\begin{equation} \label{Jinf}
J_{\nu}(z) = \mathcal{O}\big( z^{-1/2} \big), \qquad z \to \infty.
\end{equation}
In particular, $J_{\nu}$ is bounded on each half-line separated from zero and contained in $(0,\infty)$.
A more precise asymptotic for large arguments is given by, cf.\ \cite[Chapter VII, Section 7$\cdot$21]{watson},
\begin{equation} \label{Jinfp}
J_{\nu}(z) = \sqrt{\frac{2}{\pi z}} \bigg( \cos\Big( z - \frac{2\nu+1}4 \pi \Big) + \mathcal{O}\big( z^{-1} \big) \bigg),
\qquad z \to \infty.
\end{equation}

Given $n \ge 1$, the behavior of $J_{\nu}(\lambda_{n,\nu}x)$ close to $x=1$ is
\begin{equation}\label{J}
J_\nu(\lambda_{n,\nu} x)=(-1)^{n+1} |J_{\nu+1}(\lambda_{n,\nu})| \lambda_{n,\nu} (1-x) +\mathcal{O}\big((1-x)^2\big),
\qquad x\rightarrow 1;
\end{equation}
this follows from Taylor's formula and \eqref{diff_J} below.
Note that $\sign J_{\nu+1}(\lambda_{n,\nu}) = (-1)^{n+1}$.
In fact, the zeros of $J_{\nu+1}$ interlace those of $J_{\nu}$,
see e.g.\ \cite[Chapter XV, Section 15$\cdot$22]{watson},
\begin{equation} \label{zer_int}
0 < \lambda_{1,\nu} < \lambda_{1,\nu+1} < \lambda_{2,\nu} < \lambda_{2,\nu+1} < \lambda_{3,\nu} < \ldots .
\end{equation}
It is known that
\begin{equation} \label{eigvas}
\lambda_{n,\nu} = \pi n + D_{\nu} + \mathcal{O}\big(n^{-1}\big), \qquad n \ge 1,
\end{equation}
where $D_{\nu} = \pi (2\nu-1)/4$. More precise asymptotics of the zeros are also available, see
\cite[Chapter XV, Section 15$\cdot$53]{watson}; see also \cite[p.\,4445]{CiSt1}.

One has the differentiation formulae
\begin{align}
\frac{d}{dz} J_{\nu}(z) & = \frac{\nu}{z} J_{\nu}(z) - J_{\nu+1}(z), \label{diff_J} \\
\frac{d}{dz} J_{\nu+1}(z) & = J_{\nu}(z) - \frac{\nu+1}{z} J_{\nu+1}(z). \label{diff_J_b}
\end{align}
Notice that \eqref{diff_J} implies
\begin{equation} \label{diff_J_c}
\frac{d}{dz} \big( z^{-\nu}J_{\nu}(z)\big) = -z^{-\nu} J_{\nu+1}(z).
\end{equation}

An important role in our developments is played by the functions
$$
R^{\nu}_n(x) := \lambda_{n,\nu} \frac{J_{\nu+1}(\lambda_{n,\nu}x)}{J_{\nu}(\lambda_{n,\nu}x)}, \qquad x \in (0,1), \quad
	n \ge 1,
$$
and particularly by the initial one $R^{\nu}_1 =: R^{\nu}$ (for the sake of brevity from now on we omit the subscript in $R^{\nu}_1$).
Observe that by \eqref{J0} and \eqref{J}
\begin{equation} \label{Ras}
R^{\nu}(x) \simeq \frac{x}{1-x}, \qquad x \in (0,1).
\end{equation}
Using the Mittag-Leffler expansion (see \cite[p.\,498]{watson})
\begin{equation} \label{MLe}
\frac{J_{\nu+1}(z)}{J_{\nu}(z)} = \sum_{k=1}^{\infty} \frac{2z}{\lambda_{k,\nu}^2 - z^2}
\end{equation}
of the Bessel ratio we see that
\begin{equation} \label{R1}
R^{\nu}(x) = \frac{1}{1-x} - \frac{1}{1+x} + S^{\nu}(x),
\end{equation}
where $S^{\nu}$ is a smooth bounded function on $(0,1)$ given by $S^{\nu} := S_1^{\nu}$, where
\begin{equation}\label{eq:S}
S^{\nu}_n(x) := \sum_{k\ge 1, k\neq n}^{\infty}  \frac{2x}{(\frac{\lambda_{k,\nu}}{\lambda_{n,\nu}})^2 - x^2}, \qquad n \ge 1.
\end{equation}
Clearly, $S^{\nu}(x) \simeq x$ as $x \to 0^+$, in particular $S^{\nu}(0) = 0$. Furthermore, $S^{\nu}(1) = \nu+1$, which
follows from Calogero's formula \cite{Cal}
\begin{equation} \label{Calo}
\sum_{k \ge 1, \; k \neq n} \frac{2 \lambda_{n,\nu}^2}{\lambda_{k,\nu}^2-\lambda_{n,\nu}^2} = \nu+1, \qquad n \in \mathbb{N}_+.
\end{equation}

We will need the following result concerning boundary behavior of the difference $R^{\nu}-R^{\nu}_n$.
\begin{lemma} \label{lem:diffR}
Let $\nu > -1$ and $n \ge 2$ be fixed. Then
\begin{align*}
R^{\nu}(x) - R_n^{\nu}(x) \simeq
	\begin{cases}
		-x, & x \to 0^+, \\
		1-x, & x \to 1^-.
	\end{cases}
\end{align*}
\end{lemma}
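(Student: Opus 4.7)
The two asymptotics are independent, so I would handle them separately. For the behavior at $x\to 0^+$, I would just substitute the power series \eqref{J0} (applied to both $J_{\nu}$ and $J_{\nu+1}$) into the definition of $R^{\nu}_n$, which produces
\[
R^{\nu}_n(x) = \frac{\lambda_{n,\nu}^2}{2(\nu+1)}\,x + \mathcal{O}(x^3), \qquad x\to 0^+.
\]
Subtracting the instances $n=1$ and $n\ge 2$ yields $R^{\nu}(x)-R_n^{\nu}(x)=\frac{\lambda_{1,\nu}^2-\lambda_{n,\nu}^2}{2(\nu+1)}\,x+\mathcal{O}(x^3)$, whose leading coefficient is strictly negative by the monotonicity of the zeros \eqref{zer_int}; this settles the first part.

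The case $x\to 1^-$ is more delicate because both $R^{\nu}$ and $R_n^{\nu}$ are singular there and, as will be seen, the leading singular and constant parts of $R^{\nu}_n$ near $x=1$ turn out to be $n$-independent. Applying the Mittag-Leffler expansion \eqref{MLe} at $z=\lambda_{n,\nu}x$ and isolating the $k=n$ summand gives the generalization
\[
R_n^{\nu}(x) = \frac{1}{1-x} - \frac{1}{1+x} + S^{\nu}_n(x)
\]
of \eqref{R1}, valid for every $n\ge 1$, and Calogero's formula \eqref{Calo} shows that $S^{\nu}_n(1)=\nu+1$ regardless of $n$. Consequently $f_n(x):=R_n^{\nu}(x)-(1-x)^{-1}$ extends analytically to a left neighborhood of $x=1$ with $f_n(1)=\nu+\tfrac{1}{2}$, so $(R^{\nu}-R_n^{\nu})(x)=f_1(x)-f_n(x)$ vanishes at $x=1$. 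The claim therefore reduces to computing $(f_1-f_n)'(1)$ and verifying it has the correct sign.

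To carry out this computation, I would exploit the Riccati identity satisfied by $u(z):=J_{\nu+1}(z)/J_{\nu}(z)$: combining \eqref{diff_J} and \eqref{diff_J_b} one obtains $u'(z)=1-(2\nu+1)u(z)/z+u(z)^2$, which after the scaling $R_n^{\nu}(x)=\lambda_{n,\nu}u(\lambda_{n,\nu}x)$ translates into
\[
(R_n^{\nu})'(x) = \lambda_{n,\nu}^2 - \frac{2\nu+1}{x}\,R_n^{\nu}(x) + R_n^{\nu}(x)^2.
\]
Writing $R_n^{\nu}=(1-x)^{-1}+f_n$ and rearranging, the $(1-x)^{-2}$ terms cancel and one is left with
\[
f_n'(x) = \lambda_{n,\nu}^2 - \frac{2\nu+1}{x}f_n(x) + f_n(x)^2 + \frac{2x f_n(x)-(2\nu+1)}{x(1-x)}.
\]
Since $2f_n(1)=2\nu+1$, the last fraction is of the indeterminate form $0/0$ at $x=1$, and L'Hopital's rule identifies its limit there as $-(2\nu+1)-2f_n'(1)$. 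Passing $x\to 1^-$ and solving gives
\[
3f_n'(1) = \lambda_{n,\nu}^2 - (2\nu+1) - \big(\nu+\tfrac{1}{2}\big)^2,
\]
so only the $\lambda_{n,\nu}^2/3$ part depends on $n$. Hence $(f_1-f_n)'(1)=(\lambda_{1,\nu}^2-\lambda_{n,\nu}^2)/3<0$ for $n\ge 2$, and a first-order Taylor expansion at $x=1$ delivers
\[
R^{\nu}(x)-R_n^{\nu}(x) = \frac{\lambda_{n,\nu}^2-\lambda_{1,\nu}^2}{3}\,(1-x) + \mathcal{O}\big((1-x)^2\big),
\]
which is the asserted asymptotic. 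The main obstacle is the simultaneous cancellation of the singular and constant parts at $x=1$, which forces one to extract the next-order information; the Riccati equation performs this extraction cleanly and avoids explicit third-order Taylor expansions of $J_{\nu}$ and $J_{\nu+1}$ around the zero $\lambda_{n,\nu}$.
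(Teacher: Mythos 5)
Your argument is correct, and it reaches the conclusion by a genuinely different route than the paper, in both halves. For $x\to 0^+$ the paper differentiates the Mittag-Leffler representation at the origin and evaluates the resulting Rayleigh sum $\sum_{k\ge 1}\lambda_{k,\nu}^{-2}=\tfrac{1}{4(\nu+1)}$, whereas your direct Taylor expansion of the quotient $J_{\nu+1}/J_{\nu}$ produces the same coefficient $\tfrac{\lambda_{1,\nu}^2-\lambda_{n,\nu}^2}{2(\nu+1)}$ more elementarily, with no special-function identity beyond \eqref{J0}. For $x\to 1^-$ both proofs first strip off the common principal part $(1-x)^{-1}$ and the common constant $\nu+1/2$ (via \eqref{MLe} and Calogero's formula \eqref{Calo}); but where the paper then differentiates the series term by term and must invoke Calogero's second identity \eqref{Calo2} to evaluate $G'(1)$, you extract $f_n'(1)$ from the Riccati equation $(R_n^{\nu})'=\lambda_{n,\nu}^2-\tfrac{2\nu+1}{x}R_n^{\nu}+(R_n^{\nu})^2$ together with one application of L'H\^opital --- in effect re-deriving the content of \eqref{Calo2} from the ODE rather than quoting it. (In fact your route could even dispense with \eqref{Calo}: since every other term in your equation for $f_n'$ stays bounded as $x\to 1^-$, so must the fraction $\tfrac{2xf_n(x)-(2\nu+1)}{x(1-x)}$, which already forces $f_n(1)=\nu+\tfrac12$.) Both computations land on the expansion $\tfrac13(\lambda_{n,\nu}^2-\lambda_{1,\nu}^2)(1-x)+\mathcal{O}((1-x)^2)$ recorded in Remark \ref{rem:asdif}. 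The only point worth making explicit in your write-up is the justification that $f_n$ is analytic in a neighborhood of $x=1$ (locally uniform convergence of the Mittag-Leffler series away from its poles, or equivalently that $x=1$ is a simple pole of $R_n^{\nu}$ with principal part exactly $(1-x)^{-1}$ by \eqref{J}), since both your L'H\^opital step and the passage to the limit in the Riccati equation presuppose the existence of $f_n'(1)$; the paper's proof relies on the same regularity, so this is a shared, minor omission rather than a gap specific to your approach.
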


\begin{proof}
We first use the Mittag-Leffler expansion \eqref{MLe} to extend \eqref{R1} to arbitrary $n \ge 1$, getting
\begin{equation} \label{RtoS}
R_n^{\nu}(x) = \frac{1}{1-x} - \frac{1}{1+x} + S_n^{\nu}(x).
\end{equation}
Then we can write
\begin{align*}
\lim_{x \to 0^+} \frac{R^{\nu}(x) - R_n^{\nu}(x)}{x} & =
	2 \sum_{k \ge 1, k \neq 1} \bigg(\frac{\lambda_{1,\nu}}{\lambda_{k,\nu}}\bigg)^2
- 2 \sum_{k \ge 1, k \neq n} \bigg(\frac{\lambda_{n,\nu}}{\lambda_{k,\nu}}\bigg)^2 \\
& = 2 \big( \lambda_{1,\nu}^2 - \lambda_{n,\nu}^2\big) \sum_{k \ge 1} \frac{1}{\lambda_{k,\nu}^2}
= - \frac{\lambda_{n,\nu}^2 - \lambda_{1,\nu}^2}{2\nu +2}.
\end{align*}
The series that is summed in the last identity is a particular case of Rayleigh's function, in particular it is known that,
see \cite[Chapter XV, Section 15$\cdot$51]{watson},
$$
\sum_{k \ge 1}\frac{1}{\lambda_{k,\nu}^2} = \frac{1}{4\nu+4}.
$$
From the above limit evaluation it follows that $R^{\nu}(x) - R_n^{\nu}(x) \simeq -x$ as $x \to 0^+$.

It remains to verify that $R^{\nu}(x) - R_n^{\nu}(x) \simeq 1-x$ as $x \to 1^-$.
We have
$$
\lim_{x \to 1^-} \frac{R^{\nu}(x) - R_n^{\nu}(x)}{1-x} = \lim_{x \to 1^-} \frac{S_1^{\nu}(x)- S_n^{\nu}(x)}{1-x}
= \lim_{x \to 1^-} 2x \frac{G(x)}{1-x},
$$
where
$$
G(x) := \sum_{k\ge 1, k \neq 1} \frac{1}{(\frac{\lambda_{k,\nu}}{\lambda_{1,\nu}})^2 - x^2}
	- \sum_{k \ge 1, k \neq n} \frac{1}{(\frac{\lambda_{k,\nu}}{\lambda_{n,\nu}})^2 - x^2}.
$$
Clearly, $G(x)$ is differentiable at $x = 1$ and, by Calogero's formula \eqref{Calo}, $G(1) = 0$.
Further,
$$
\lim_{x \to 1^-} G'(x) = G'(1) = \sum_{k \ge 1, k \neq 1} \frac{2\lambda_{1,\nu}^4}{(\lambda_{k,\nu}^2-\lambda_{1,\nu}^2)^2}
	- \sum_{k \ge 1, k \neq n} \frac{2\lambda_{n,\nu}^4}{(\lambda_{k,\nu}^2-\lambda_{n,\nu}^2)^2}
	= \frac{\lambda_{1,\nu}^2 - \lambda_{n,\nu}^2}{6}.
$$
The last identity is a consequence of another formula by Calogero \cite{Cal2}, namely
\begin{equation} \label{Calo2}
\sum_{k \ge 1, k \neq n} \frac{\lambda_{n,\nu}^4}{(\lambda_{n,\nu}^2-\lambda_{k,\nu}^2)^2}
	= \frac{\lambda_{n,\nu}^2 - (\nu+1)(\nu+5)}{12}, \qquad n \in \mathbb{N}_+.
\end{equation}
Using now L'H\^opital's rule we can evaluate the relevant limit,
$$
\lim_{x \to 1^-} \frac{R^{\nu}(x) - R_n^{\nu}(x)}{1-x} = 2\lim_{x \to 1^-} \frac{G'(x)}{-1}
	= \frac{\lambda_{n,\nu}^2-\lambda_{1,\nu}^2}{3}.
$$
The conclusion follows.
\end{proof}

\begin{remark} \label{rem:asdif}
The proof of Lemma \ref{lem:diffR} gives in fact more precise asymptotics
$$
R^{\nu}(x) - R^{\nu}_{n}(x) =
	\begin{cases}
		- \frac{1}{2\nu+2} (\lambda_{n,\nu}^2 - \lambda_{1,\nu}^2) x + \mathcal{O}(x^2), & x \to 0^+, \\
		\frac{1}{3}(\lambda_{n,\nu}^2 - \lambda_{1,\nu}^2) (1-x) + \mathcal{O}((1-x)^2), & x \to 1^-.
	\end{cases}
$$
\end{remark}

\section{Fourier-Bessel and Jacobi contexts} \label{sec:prel}

In this paper we consider mainly three contexts of discrete Fourier-Bessel expansions depending on the choice
of the associated measure, and also, for technical reasons, an auxiliary context of Jacobi expansions.
We always assume that parameters of type $\nu,\alpha,\beta > -1$. Traditionally, the Fourier-Bessel
systems we deal with are enumerated by the elements of $\mathbb{N}_+$ rather than $\mathbb{N}$.

\subsection{Natural measure Fourier-Bessel setting} \label{ssec:FBnat}

Let
$$
\phi_n^{\nu}(x) = \frac{\sqrt{2}}{|J_{\nu+1}(\lambda_{n,\nu})|} x^{-\nu} J_{\nu}(\lambda_{n,\nu}x), \qquad
		x \in (0,1), \quad n \ge 1.
$$
The Fourier-Bessel system $\{\phi_n^{\nu} : n \ge 1\}$ is an orthonormal basis in $L^2(d\mu_{\nu})$,
where
$$
d\mu_{\nu}(x) = x^{2\nu+1}\, dx, \qquad x \in (0,1).
$$
Moreover, these are eigenfunctions of the Bessel operator
$$
\mathcal{L}_{\nu} = - \frac{d^2}{dx^2} - \frac{2\nu+1}{x} \frac{d}{dx}
$$
considered on $(0,1)$, $\mathcal{L}_{\nu} \phi_n^{\nu} = \lambda_{n,\nu}^2 \phi_n^{\nu}$, $n \ge 1$.

When $\nu = d/2-1$, $d \ge 1$, $\mathcal{L}_{\nu}$ is the radial part of the Euclidean Laplacian in
$\mathbb{R}^d$, and expansions with respect to the system $\{\phi_n^{\nu}\}$ correspond to radial analysis in the
$d$-dimensional unit ball in $\mathbb{R}^d$; see e.g.\ \cite{NoRo1} for more details.

\subsection{Lebesgue measure Fourier-Bessel setting} \label{ssec:FBleb}

This context arises from the previous one by transforming it to the Lebesgue measure situation via the mapping
$$
U_{\nu}f(x) = x^{\nu+1/2} f(x).
$$
This leads to the Fourier-Bessel system
\begin{equation*} 
\psi_n^{\nu} = U_{\nu} \phi_n^{\nu}, \qquad n \ge 1,
\end{equation*}
which forms an orthonormal basis in $L^2(dx)$. The corresponding Laplacian is$^{\dag}$
$$
\mathbb{L}_{\nu} = U_{\nu} \mathcal{L}_{\nu} U_{\nu}^{-1} = -\frac{d^2}{dx^2} + \frac{(\nu+1/2)(\nu-1/2)}{x^2} 
$$
on $(0,1)$, and one has $\mathbb{L}_{\nu} \psi_n^{\nu} = \lambda_{n,\nu}^2 \psi_n^{\nu}$, $n \ge 1$.
\footnotetext{\noindent $\dag$ Note that $\mathbb{L}_{\nu}$, as well as $\mathcal{L}_{\nu}$, $\mathfrak{L}_{\nu}$ and
$\mathbb{J}_{\alpha,\beta}$, can be written in a divergence form. For instance, in the case of $\mathbb{L}_{\nu}$ one has
$$
\mathbb{L}_{\nu}f =
- \frac{1}{\psi_1^{\nu}}\frac{d}{dx}\bigg(\big(\psi_1^{\nu}\big)^2 \frac{d}{dx}\bigg(\frac{1}{\psi_1^{\nu}}f\bigg)\bigg).
$$
}

\subsection{Essential measure Fourier-Bessel setting} \label{FBneu}

We now introduce another Fourier-Bessel context that has not appeared in the literature yet.
Here we choose the reference measure so that the bottom eigenfunction is constant. Then the
orthogonal system satisfies the Neumann boundary condition
at the right endpoint of $(0,1)$ (notice that the systems $\{\phi_n^{\nu}\}$ and $\{\psi_n^{\nu}\}$
satisfy the Dirichlet condition at this endpoint) and, moreover, the associated
derivative is the standard one (this will be discussed in detail in Section \ref{sec:der} below). Thus let
$$
\widetilde{U}_{\nu}f = f / \phi_1^{\nu}
$$
and consider the system
\begin{equation*} 
\varphi_n^{\nu} = \widetilde{U}_{\nu} \phi_n^{\nu}, \qquad n \ge 1,
\end{equation*}
which is an orthonormal basis in $L^2(d\eta_{\nu})$, with
$$
d\eta_{\nu}(x) = x^{2\nu+1}\big[\phi_1^{\nu}(x)\big]^2 \, dx, \qquad x \in (0,1).
$$
The corresponding Laplacian now is
$$
\mathfrak{L}_{\nu} = \widetilde{U}_{\nu} \mathcal{L}_{\nu} \widetilde{U}_{\nu}^{-1}
	= -\frac{d^2}{dx^2} - \bigg( \frac{2\nu+1}{x} - 2R^{\nu}(x)\bigg) \frac{d}{dx} + \lambda_{1,\nu}^2
$$
considered on $(0,1)$, being $\mathfrak{L}_{\nu} \varphi_n^{\nu} = \lambda_{n,\nu}^2 \varphi_n^{\nu}$, $n \ge 1$.
Note that the measure $d\eta_{\nu}$ is doubling, and $(0,1)$ equipped with $d\eta_{\nu}$ and the ordinary distance
is a space of homogeneous type in the sense of Coifman and Weiss \cite{CW}.

The setting just defined is sub-probabilistic in the sense that the semigroup generated by a natural in this context
self-adjoint extension of $\mathfrak{L}_{\nu}$ is sub-Markovian. Nevertheless, a simple modification leads to a genuine
probabilistic setting. Indeed, consider
$$
\mathfrak{L}_{\nu}^M = \mathfrak{L}_{\nu} - \lambda^2_{1,\nu}
	= -\frac{d^2}{dx^2} - \bigg( \frac{2\nu+1}{x} - 2R^{\nu}(x)\bigg) \frac{d}{dx}.
$$
Then the associated orthonormal basis in $L^2(d\eta_{\nu})$ is still $\{\varphi_n^{\nu} : n \ge 1\}$,
but the eigenvalues are shifted,
$$
\mathfrak{L}_{\nu}^M \varphi_n^{\nu} = \big( \lambda_{n,\nu}^2-\lambda_{1,\nu}^2\big) \varphi_n^{\nu},
\qquad n \ge 1.
$$
In particular, the bottom eigenvalue is $0$.
The operator $\mathfrak{L}_{\nu}^M$ admits a natural self-adjoint extension in $L^2(d\eta_{\nu})$ defined
spectrally in terms of the orthonormal system, and the associated semigroup is Markovian.

The framework of $\mathfrak{L}_{\nu}^M$ just introduced may be regarded as a Fourier-Bessel analogue of
the classic probabilistic settings related to expansions into classical Hermite, Laguerre and Jacobi polynomials.
We remark that the natural measure Fourier-Bessel setting is strictly sub-probabilistic in the sense that the semigroup
generated by a natural in that context self-adjoint extension of $\mathcal{L}_{\nu}$ is sub-Markovian, but not Markovian.
See e.g.\ the discussion in \cite[Section 4.3]{NoRo1}; see also \cite{MSZ}.
The case of the Lebesgue measure Fourier-Bessel setting is more complicated.
We strongly believe that the semigroup generated by a natural self-adjoint extension of $\mathbb{L}_{\nu}$ is
sub-Markovian only for some $\nu > -1$, but never Markovian; see again \cite[Section 4.3]{NoRo1} for some comments in this direction.
However, a complete detailed treatment of these questions seems to require an analysis which is far beyond the scope of this paper.

\begin{remark} \label{rem:CZ}
The essential measure Fourier-Bessel context appears to be an optimal environment among the other Fourier-Bessel situations
for a direct application of the Calder\'on-Zygmund operator theory in the following sense.
Note that the three Fourier-Bessel settings considered in this section
are interconnected via the mappings $U_{\nu}$ and $\widetilde{U}_{\nu}$, and similarly are connected harmonic analysis
operators in these contexts. In particular, any weighted $L^p$ inequality for such an operator, e.g.\ Riesz transform,
in one of the contexts can be transferred to the corresponding weighted $L^p$ bounds in the other contexts,
with suitable modifications of the weight.
The standard Calder\'on-Zygmund theory on spaces of homogeneous type can potentially be applied in each of the three
situations, in each case producing weighted results in all the situations via the aforementioned transference.
The point is that a successful application in the essential measure framework delivers the largest classes of
weights, hence the most complete results, in all the three situations, compared to the two other applications.
\end{remark}

Certain modification of the essential measure Fourier-Bessel setting, which might be of interest, is considered in
Section \ref{ssec:mod_e}.

\subsection{Jacobi trigonometric Lebesgue measure setting scaled to $(0,1)$}\label{sec:jac}

This context will serve us as a source of both intuition and results to be transferred to the
Lebesgue measure Fourier-Bessel framework. This idea originates in \cite{NoRo2}, it was also explored in \cite{NoRo3};
the latter article reveals its further usefulness.

We consider the functions
$$
\Phi_k^{\alpha,\beta}(x) = c_k^{\alpha,\beta} \Big(\sin \frac{\pi x}2 \Big)^{\alpha+1/2}
	\Big( \cos \frac{\pi x}2 \Big)^{\beta+1/2} P_k^{\alpha,\beta}(\cos \pi x), \qquad x \in (0,1), \quad k \ge 0,
$$
where $\alpha, \beta > -1$, $P_k^{\alpha,\beta}$ are the classical Jacobi polynomials (cf.\ \cite{Sz}) and $c_k^{\alpha,\beta}>0$ are
suitable (explicitly known, see e.g.\ \cite{NoRo2}) normalizing constants.
The system $\{\Phi_k^{\alpha,\beta}:k \ge 0\}$ is an
orthonormal basis in $L^2(dx)$ consisting of eigenfunctions of the Jacobi Laplacian
$$
\mathbb{J}_{\alpha,\beta} = -\frac{d^2}{dx^2} + \frac{\pi^2 (\alpha+1/2)(\alpha-1/2)}{4\sin^2 \frac{\pi x}2}
	+ \frac{\pi^2(\beta+1/2)(\beta-1/2)}{4\cos^2\frac{\pi x}2};
$$
one has $\mathbb{J}_{\alpha,\beta} \Phi_k^{\alpha,\beta} = \pi^2 (k+\frac{\alpha+\beta+1}2)^2 \Phi_k^{\alpha,\beta}$, $k \ge 0$.

Note that (see e.g.\ \cite[Chapter IV]{Sz} for the relevant formulae)
\begin{align*}
\Phi_k^{-1/2,1/2}(x) & = \sqrt{2} \cos\big( \pi(k+1/2)x \big), \qquad k \ge 0, \\
\Phi_k^{1/2,1/2}(x) & = \sqrt{2} \sin\big( \pi(k+1)x \big), \qquad k \ge 0.
\end{align*}
Further, $\mathbb{J}_{\pm 1/2, 1/2} = -\frac{d^2}{dx^2}$. On the other hand, see \eqref{specJ},
\begin{align*}
\psi_n^{-1/2}(x) & = \sqrt{2} \cos\big( \pi(n-1/2)x \big), \qquad n \ge 1, \\
\psi_n^{1/2}(x) & = \sqrt{2} \sin\big( \pi n x \big), \qquad n \ge 1,
\end{align*}
and $\mathbb{L}_{\pm 1/2} = -\frac{d^2}{dx^2}$. Thus, in the cases $\nu = \pm 1/2$, the Lebesgue measure Fourier-Bessel
setting coincides with the Jacobi setting with $(\alpha,\beta) = (\pm 1/2, 1/2)$, respectively. That means that
the systems, the Laplacians and the eigenvalues are exactly the same (only enumeration is slightly different).

Finally, notice that for $\nu=-1/2$ the natural and Lebesgue measures Fourier-Bessel settings coincide,
both being the same as the Jacobi one with $(\alpha,\beta) = (-1/2,1/2)$.

\section{Derivatives} \label{sec:der}

A crucial aspect of harmonic analysis related to Fourier-Bessel expansions is a proper choice of the associated derivatives.
These should be, in principle, first order differential operators decomposing the corresponding Laplacians.
Definitions of several important objects depend heavily on that choice, and this in particular
pertains to Fourier-Bessel Riesz transforms and Sobolev spaces.

However, the Fourier-Bessel derivatives used so far are not really adequate.
In the natural and Lebesgue measures Fourier-Bessel settings they are taken from the decompositions
$$
\mathcal{L}_{\nu} = \mathfrak{d}_{\nu}^*\mathfrak{d}_{\nu}, \qquad \mathbb{L}_{\nu} = \mathfrak{D}_{\nu}^* \mathfrak{D}_{\nu},
$$
where $\mathfrak{d}_{\nu} = \frac{d}{dx}$, $\mathfrak{D}_{\nu} = \frac{d}{dx}-\frac{\nu+1/2}{x}$, and $\mathfrak{d}_{\nu}^*$
and $\mathfrak{D}_{\nu}^*$ are the formal adjoints in $L^2(d\mu_{\nu})$ and $L^2(dx)$, respectively.
Such derivatives are appropriate in the theory of continuous Fourier-Bessel expansions on the whole half-line $(0,\infty)$,
in the context of the modified Hankel transform, where they originate from; then the related differential Laplacians are again
$\mathcal{L}_{\nu}$ and $\mathbb{L}_{\nu}$, but considered on $(0,\infty)$, and the above decompositions hold on $(0,\infty)$
(for more details, see e.g.\ \cite{BHNV} or \cite{CaSz0} and references therein).
But $\mathfrak{d}_{\nu}$ and $\mathfrak{D}_{\nu}$ are no longer good when restricting to $(0,1)$ and considering discrete
Fourier-Bessel expansions since, roughly speaking, they do not reflect presence of the right boundary of $(0,1)$
and the Dirichlet boundary condition there. Consequently, these derivatives do not allow to develop
(or develop properly) important harmonic
analysis aspects of discrete Fourier-Bessel expansions like, for instance, the theory of the associated Sobolev spaces.
Actually, it seems that for this very reason such Sobolev spaces have not been studied effectively yet.
The problem also interferes with existing
works dealing with Riesz transforms and conjugacy for Fourier-Bessel expansions, for instance \cite[Section 18]{MuSt},
\cite{CiSt3,CiRo4} as well as \cite{NoSt1} and indirectly the related paper \cite{NoSt2}.

We think that the derivatives that philosophically and practically match better the two Fourier-Bessel contexts in question
should factorize, in a fashion indicated above, the Laplacians shifted by the bottom eigenvalue, so that spectra of the
operators being decomposed start at the origin. This precisely happens in most, if not all, so far studied classical
frameworks related to classical discrete orthogonal expansions, see e.g.\ \cite[Sections 7.1--7.7]{NoSt1}, except
for Fourier-Bessel expansions \cite[Section 7.8]{NoSt1}$^{\ddag}$. Apparently, the latter case still requires a deeper understanding.
\footnotetext{\noindent $\ddag$ Note that the Riesz transform associated with $\mathcal{L}_{\nu}$
that is postulated in \cite[Section 7.8]{NoSt1}
coincides with the conjugacy mapping introduced by Muckenhoupt and Stein \cite[Section 18]{MuSt}, contrary to an
erroneous claim finishing \cite[Section 7.8]{NoSt1}. In fact, the conjugate mapping of Muckenhoupt and Stein is incorrectly
written in terms of the system $\phi_k^{\nu}$ in \cite[p.\,698]{NoSt1}.}

In view of what was just said, we are looking for derivatives (first order differential operators)
$\delta_\nu$ and $\mathbb{D}_{\nu}$ giving the decompositions
$$
\mathcal{L}_{\nu} = \lambda^2_{1,\nu} + {\delta}^*_{\nu} {\delta}_{\nu}, \qquad
	\mathbb{L}_{\nu} = \lambda_{1,\nu}^2 + {\mathbb{D}}_{\nu}^* {\mathbb{D}}_{\nu}.
$$
This immediately boils down to solving a Riccati-type ODE, which is not that trivial.
Thus we proceed in a more elementary way.
First we switch to the essential measure Fourier-Bessel setting, where the analogous question is easy, and then transfer
the result to the other settings via the mappings $U_{\nu}$ and~$\widetilde{U}_{\nu}$.

\subsection{Essential measure Fourier-Bessel setting} \label{sec:ess}

We are looking for the first order differential operator $d_{\nu}$ satisfying
$$
\mathfrak{L}_{\nu} = \lambda_{1,\nu}^2 + d_{\nu}^* d_{\nu} \quad \textrm{and} \quad
\mathfrak{L}^M_{\nu} = d_{\nu}^* d_{\nu},
$$
the formal adjoint being taken in $L^2(d\eta_{\nu})$. Since $d_{\nu}$ annihilates the bottom eigenfunction, which
is identically $1$, it follows that $d_{\nu}$ must be the standard derivative. This is because, see \cite[Section 2]{NoSt1},
$$
\big\langle d_{\nu} \varphi_1^{\nu}, d_{\nu} \varphi^{\nu}_1 \big\rangle_{d\eta_{\nu}} = 
\big\langle d^*_{\nu} d_{\nu} \varphi_1^{\nu}, \varphi_1^{\nu} \big\rangle_{d\eta_{\nu}} =
\big\langle \mathfrak{L}^M_{\nu} \varphi_1^{\nu}, \varphi_1^{\nu} \big\rangle_{d\eta_{\nu}} = 0.
$$
Thus
$$
d_{\nu} = \frac{d}{dx}, \qquad d_{\nu}^* = -\frac{d}{dx} - \frac{2\nu+1}{x} + 2R^{\nu}(x).
$$
Here we keep the subscript even though $d_{\nu}$ does not depend on $\nu$.
A simple computation shows that
\begin{equation} \label{eq:us1}
d_{\nu}\varphi_n^{\nu} = (R^{\nu}-R_n^{\nu}) \varphi_n^{\nu}, \qquad n \ge 1.
\end{equation}
Note that 
\begin{equation}\label{eq:Rder}
R_n^{\nu} = - (\phi_n^{\nu})'/\phi_n^{\nu},
\end{equation}
 see \eqref{diff_J_c}.

\subsection{Natural measure Fourier-Bessel setting} \label{sec:FBdernat}

To find $\delta_{\nu}$ in the decomposition
$$
\mathcal{L}_{\nu} = \lambda_{1,\nu}^2 + \delta_{\nu}^* \delta_{\nu}
$$
observe that  $\delta_{\nu} = \widetilde{U}^{-1}_{\nu} d_{\nu} \widetilde{U}_{\nu}$. This leads to the expressions
$$
\delta_{\nu} = \frac{d}{dx} + R^{\nu}(x), \qquad \delta_{\nu}^* = -\frac{d}{dx} - \frac{2\nu+1}x + R^{\nu}(x),
$$
the formal adjoint $\delta_{\nu}^*$ being taken in $L^2(d\mu_{\nu})$. Furthermore,
\begin{equation} \label{eq:us2}
\delta_{\nu} \phi_n^{\nu} = (R^{\nu}-R_n^{\nu}) \phi_n^{\nu}, \qquad n \ge 1.
\end{equation}

\subsection{Lebesgue measure Fourier-Bessel setting} \label{sec:FBderleb}

By the relation $\mathbb{D}_{\nu} = U_{\nu} \delta_{\nu} U_{\nu}^{-1}$ we have
$$
\mathbb{L}_{\nu} = \lambda_{1,\nu}^2 + \mathbb{D}_{\nu}^*\mathbb{D}_{\nu}
$$
with
\begin{equation}\label{eq:derleb}
\mathbb{D}_{\nu} = \frac{d}{dx} - \frac{\nu+1/2}x + R^{\nu}(x), \qquad
\mathbb{D}_{\nu}^* = -\frac{d}{dx} - \frac{\nu+1/2}x + R^{\nu}(x),
\end{equation}
the formal adjoint being taken in $L^2(dx)$. Further,
\begin{equation}\label{eq:useful}
\mathbb{D}_{\nu} \psi_n^{\nu} = (R^{\nu}-R_n^{\nu}) \psi_n^{\nu}, \qquad n \ge 1.
\end{equation}

\subsection{Jacobi trigonometric Lebesgue measure setting scaled to $(0,1)$} \label{sec:jacder}

The Jacobi operator admits the decomposition (cf.\ \cite[Section 7.7]{NoSt1})
$$
\mathbb{J}_{\alpha,\beta} = \pi^2 \bigg(\frac{\alpha+\beta+1}2\bigg)^2 + D_{\alpha,\beta}^* D_{\alpha,\beta},
$$
where
\begin{align*}
D_{\alpha,\beta} & = \frac{d}{dx} - \pi \frac{2\alpha+1}4 \cot\frac{\pi x}2 + \pi \frac{2\beta+1}4 \tan\frac{\pi x}2, \\
D^*_{\alpha,\beta} & = -\frac{d}{dx} - \pi \frac{2\alpha+1}4 \cot\frac{\pi x}2 + \pi \frac{2\beta+1}4 \tan\frac{\pi x}2.
\end{align*}
Here the adjoint is taken in $L^2(dx)$. One has
\begin{equation} \label{jdfe}
D_{\alpha,\beta}\Phi_k^{\alpha,\beta} = - \pi \sqrt{k(k+\alpha+\beta+1)} \Phi_{k-1}^{\alpha+1,\beta+1}, \qquad k \ge 1,
\end{equation}
and $D_{\alpha,\beta}\Phi_0^{\alpha,\beta} \equiv 0$.

Note that the Jacobi derivatives coincide with the Lebesgue measure Fourier-Bessel setting derivatives when
$(\alpha,\beta) = (\pm 1/2, 1/2)$. Indeed, since (see \eqref{specJ})
\begin{equation} \label{Rexp}
R^{-1/2}(x) = \frac{\pi}2 \tan\frac{\pi x}2, \qquad R^{1/2}(x) = \frac{1}{x} - \pi \cot(\pi x),
\end{equation}
we have
\begin{align*}
\mathbb{D}_{-1/2} & = \frac{d}{dx} + \frac{\pi}2 \tan\frac{\pi x}2 = D_{-1/2,1/2}, \\
\mathbb{D}_{1/2} & = \frac{d}{dx} - \pi \cot(\pi x) = \frac{d}{dx} - \frac{\pi}2 \cot\frac{\pi x}2 + \frac{\pi}2\tan\frac{\pi x}2
	= D_{1/2,1/2}.
\end{align*}

\section{Differentiated systems} \label{sec:diff}

The main aim of this section is to show that the differentiated systems
$$
\big\{d_{\nu}\varphi_n^{\nu} : n \ge 2\big\}, \qquad \big\{\delta_{\nu}\phi_n^{\nu} : n \ge 2\big\}, \qquad
\big\{\mathbb{D}_{\nu}\psi_n^{\nu} : n \ge 2\big\}
$$
are orthogonal bases in the corresponding $L^2$ spaces.
Recall that $d_{\nu}\varphi_{1}^{\nu} = \delta_{\nu} \phi_1^{\nu} = \mathbb{D}_{\nu}\psi_1^{\nu} = 0$.

Let us start with noting basic asymptotics of functions belonging to the differentiated systems.
Notice that for the original systems we have, for each $\nu > -1$ and $n \ge 1$ fixed,
\begin{align}
\varphi_n^{\nu}(x) & \simeq
	\begin{cases}
		1, & x \to 0^+, \\
		(-1)^{n+1}, & x \to 1^-,
	\end{cases} \label{asy1} \\
\phi_n^{\nu}(x) & \simeq
	\begin{cases}
		1, & x \to 0^+, \\
		(-1)^{n+1}(1-x), & x \to 1^-,
	\end{cases} \label{asy2}  \\
\psi_n^{\nu}(x) & \simeq
	\begin{cases}
		x^{\nu+1/2}, & x \to 0^+, \\
		(-1)^{n+1}(1-x), & x \to 1^-,
	\end{cases} \label{asy3}
\end{align}
which is an instant consequence of \eqref{J0} and \eqref{J}. 
\begin{proposition} \label{prop:asd}
Let $\nu > -1$ and $n \ge 2$ be fixed.
Each of the functions $d_{\nu} \varphi_n^{\nu}$, $\delta_{\nu} \phi_n^{\nu}$ and $\mathbb{D}_{\nu} \psi_n^{\nu}$
is smooth on $(0,1)$. Further, one has
\begin{align*}
d_{\nu}\varphi_n^{\nu}(x) & \simeq
	\begin{cases}
		-x, & x \to 0^+, \\
		(-1)^{n+1}(1-x), & x \to 1^-,
	\end{cases} \\
\delta_{\nu}\phi_n^{\nu}(x) & \simeq
	\begin{cases}
		-x, & x \to 0^+, \\
		(-1)^{n+1}(1-x)^2, & x \to 1^-,
	\end{cases} \\
\mathbb{D}_{\nu}\psi_n^{\nu}(x) & \simeq
	\begin{cases}
		-x^{\nu+3/2}, & x \to 0^+, \\
		(-1)^{n+1}(1-x)^2, & x \to 1^-.
	\end{cases}
\end{align*}
\end{proposition}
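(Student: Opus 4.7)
The proof is essentially a direct computation that combines three ingredients already established: the factorized expressions \eqref{eq:us1}, \eqref{eq:us2}, \eqref{eq:useful} for the action of the derivatives on the orthonormal systems, the boundary asymptotics \eqref{asy1}--\eqref{asy3} of those systems, and the asymptotics of $R^{\nu}-R_n^{\nu}$ furnished by Lemma \ref{lem:diffR}. There is no real obstacle; the only point that needs a brief comment is the behavior near $x=1^-$, where $R^{\nu}$ and $R_n^{\nu}$ separately blow up but their difference remains tame, which is precisely the content of Lemma \ref{lem:diffR}.

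The plan is as follows. First I would dispose of smoothness. Since $J_{\nu}(\lambda_{1,\nu}x)$ does not vanish on $(0,1)$ (the first positive zero of $J_{\nu}$ is $\lambda_{1,\nu}$), the function $R^{\nu}$ is smooth on $(0,1)$; the coefficients $(\nu+1/2)/x$ and $(2\nu+1)/x$ appearing in $\mathbb{D}_{\nu}$, $\delta_{\nu}^*$, $d_{\nu}^*$ are smooth on $(0,1)$ as well. The original systems $\varphi_n^{\nu}$, $\phi_n^{\nu}$, $\psi_n^{\nu}$ are smooth on $(0,1)$ because they are built from $J_{\nu}(\lambda_{n,\nu}\cdot)$ and elementary factors. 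Consequently, each of $d_{\nu}\varphi_n^{\nu}$, $\delta_{\nu}\phi_n^{\nu}$, $\mathbb{D}_{\nu}\psi_n^{\nu}$ is smooth on $(0,1)$.

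Next, I would derive the asymptotics by multiplying the asymptotic of $R^{\nu}-R_n^{\nu}$ from Lemma \ref{lem:diffR} against the asymptotic of the relevant eigenfunction. For $d_{\nu}\varphi_n^{\nu}$, formula \eqref{eq:us1} gives
\begin{equation*}
d_{\nu}\varphi_n^{\nu}(x) = \bigl(R^{\nu}(x)-R_n^{\nu}(x)\bigr)\varphi_n^{\nu}(x),
\end{equation*}
so as $x\to 0^+$ one gets $(-x)\cdot 1 = -x$, and as $x\to 1^-$ one gets $(1-x)\cdot(-1)^{n+1} = (-1)^{n+1}(1-x)$, using \eqref{asy1}. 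For $\delta_{\nu}\phi_n^{\nu}$ the same reasoning via \eqref{eq:us2} combined with \eqref{asy2} yields $(-x)\cdot 1 = -x$ near $0$ and $(1-x)\cdot(-1)^{n+1}(1-x) = (-1)^{n+1}(1-x)^2$ near $1$. The $\mathbb{D}_{\nu}\psi_n^{\nu}$ case is analogous, using \eqref{eq:useful} and \eqref{asy3}: near $0$ one obtains $(-x)\cdot x^{\nu+1/2}=-x^{\nu+3/2}$, and near $1$ again $(-1)^{n+1}(1-x)^2$.

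The only subtlety worth flagging is at $x=1^-$, where the individual summands of $R^{\nu}$ and $R_n^{\nu}$ both contain the singular term $1/(1-x)$ (see \eqref{R1}, \eqref{RtoS}), so that their finite-order vanishing combines correctly with the linear zero of $\phi_n^{\nu}$ and $\psi_n^{\nu}$ to give a quadratic vanishing of $\delta_{\nu}\phi_n^{\nu}$ and $\mathbb{D}_{\nu}\psi_n^{\nu}$, respectively, and a linear vanishing of $d_{\nu}\varphi_n^{\nu}$ (since $\varphi_n^{\nu}$ does not vanish at $1$, cf.\ \eqref{asy1}). All three sign patterns are encoded in $\sign J_{\nu+1}(\lambda_{n,\nu})=(-1)^{n+1}$ already built into \eqref{asy1}--\eqref{asy3}.
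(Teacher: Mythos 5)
Your proposal is correct and takes essentially the same route as the paper, which likewise obtains the asymptotics by combining \eqref{eq:us1}, \eqref{eq:us2}, \eqref{eq:useful} with \eqref{asy1}--\eqref{asy3} and Lemma \ref{lem:diffR}. The only cosmetic difference is in the smoothness step: the paper notes that the interior singularities of $R_n^{\nu}$ in the factorized form $(R^{\nu}-R_n^{\nu})\varphi_n^{\nu}$ are cancelled by the Bessel zeros, whereas you observe directly that the operators $d_{\nu}$, $\delta_{\nu}$, $\mathbb{D}_{\nu}$ have smooth coefficients on $(0,1)$ (since $J_{\nu}(\lambda_{1,\nu}\cdot)$ does not vanish there) and act on smooth functions --- both arguments are valid.
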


\begin{proof}
To justify the smoothness simply invoke the relevant definitions and write \eqref{eq:us1}, \eqref{eq:us2} and \eqref{eq:useful}
in terms of the Bessel function to see that singularities of the difference $R^{\nu}-R^{\nu}_n$ are canceled out by the
Bessel zeros.
To get the asymptotics, combine \eqref{asy1}, \eqref{asy2} and \eqref{asy3} with \eqref{eq:us1}, \eqref{eq:us2},
\eqref{eq:useful}, respectively, and use Lemma \ref{lem:diffR}.
\end{proof}

\begin{remark} 
With the aid of the asymptotics from Remark \ref{rem:asdif} one can describe more precisely than
Proposition \ref{prop:asd} the endpoint behavior of functions from the differentiated systems.
We leave the details to interested readers.
\end{remark}

\begin{proposition} \label{prop:orth}
Let $\nu > -1$.
\begin{itemize}
\item[(a)] The system $\{d_{\nu}\varphi_n^{\nu} : n \ge 2\}$ is orthogonal in $L^2(d\eta_{\nu})$.
\item[(b)] The system $\{\delta_{\nu}\phi_n^{\nu} : n \ge 2\}$ is orthogonal in $L^2(d\mu_{\nu})$.
\item[(c)] The system $\{\mathbb{D}_{\nu}\psi_n^{\nu} : n \ge 2\}$ is orthogonal in $L^2(dx)$.
\end{itemize}
\end{proposition}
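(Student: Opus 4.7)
The plan is to establish (a) directly in the essential measure setting via a Sturm–Liouville style integration by parts based on the factorization $\mathfrak{L}_\nu^M = d_\nu^* d_\nu$, and then to deduce (b) and (c) by transferring (a) through the unitary equivalences $\widetilde{U}_\nu$ and $U_\nu$.

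For (a), set $w_\nu(x) := x^{2\nu+1}[\phi_1^\nu(x)]^2$ so that $d\eta_\nu = w_\nu\, dx$. For $n\neq m$, both $\ge 2$, I would integrate by parts once to get
\[
\langle d_\nu \varphi_n^\nu, d_\nu \varphi_m^\nu\rangle_{d\eta_\nu} = \bigl[\varphi_n^\nu\, d_\nu\varphi_m^\nu\, w_\nu\bigr]_0^1 + \langle \varphi_n^\nu, d_\nu^* d_\nu \varphi_m^\nu\rangle_{d\eta_\nu}.
\]
Since $d_\nu^* d_\nu = \mathfrak{L}_\nu^M$ and $\mathfrak{L}_\nu^M \varphi_m^\nu = (\lambda_{m,\nu}^2-\lambda_{1,\nu}^2)\varphi_m^\nu$, the bulk integral equals a scalar multiple of $\langle \varphi_n^\nu,\varphi_m^\nu\rangle_{d\eta_\nu}$, which vanishes by the already established orthogonality of $\{\varphi_n^\nu\}$ in $L^2(d\eta_\nu)$.

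The real work lies in showing that the boundary term vanishes at both endpoints, and this is where the asymptotics from Section \ref{sec:diff} enter decisively. As $x\to 0^+$, by \eqref{asy1} and Proposition \ref{prop:asd} one has $\varphi_n^\nu(x)\simeq 1$ and $d_\nu\varphi_m^\nu(x)\simeq -x$, while \eqref{J0} guarantees that $\phi_1^\nu$ tends to a nonzero constant at $0$, so that $w_\nu(x)\simeq x^{2\nu+1}$; the product is therefore $\mathcal{O}(x^{2\nu+2})\to 0$ since $\nu>-1$. As $x\to 1^-$, \eqref{asy1} gives $\varphi_n^\nu(x)\simeq (-1)^{n+1}$ and Proposition \ref{prop:asd} gives $d_\nu\varphi_m^\nu(x)\simeq (-1)^{m+1}(1-x)$; combined with the behavior $\phi_1^\nu(x)\simeq 1-x$ coming from \eqref{asy2}, which yields $w_\nu(x)\simeq (1-x)^2$, the boundary integrand is $\mathcal{O}((1-x)^3)\to 0$. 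Thus both endpoint contributions disappear and (a) is complete.

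For (b) and (c), I would invoke the fact that $\widetilde{U}_\nu:L^2(d\mu_\nu)\to L^2(d\eta_\nu)$ and $U_\nu:L^2(d\mu_\nu)\to L^2(dx)$ are isometric isomorphisms, satisfying $\widetilde{U}_\nu\phi_n^\nu=\varphi_n^\nu$ and $U_\nu\phi_n^\nu=\psi_n^\nu$, together with the intertwining identities $d_\nu=\widetilde{U}_\nu\delta_\nu\widetilde{U}_\nu^{-1}$ and $\mathbb{D}_\nu=U_\nu\delta_\nu U_\nu^{-1}$ recorded in Section \ref{sec:der}. These yield $d_\nu\varphi_n^\nu=\widetilde{U}_\nu(\delta_\nu\phi_n^\nu)$ and $\mathbb{D}_\nu\psi_n^\nu=U_\nu(\delta_\nu\phi_n^\nu)$, so the orthogonality statements in (b) and (c) are precisely the statement of (a) transported by an isometry. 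The only substantive obstacle is therefore the boundary analysis in (a); once the correct factorization $\mathfrak{L}_\nu^M=d_\nu^* d_\nu$ is in hand together with the precise endpoint behavior of Proposition \ref{prop:asd}, the proof reduces to a routine spectral computation.
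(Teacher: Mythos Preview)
Your proof is correct and follows essentially the same approach as the paper: both rely on the factorization $\mathfrak{L}_\nu - \lambda_{1,\nu}^2 = d_\nu^* d_\nu$, an integration by parts reducing to the orthogonality of the original system, and the endpoint asymptotics of Proposition~\ref{prop:asd} to kill the boundary terms. The paper packages the integration-by-parts step by citing \cite[Lemma~2]{NoSt1} and verifying its hypotheses (namely $d_\nu\varphi_n^\nu\in L^2(d\eta_\nu)$ and the identity~\eqref{T1}), while you carry out the computation explicitly; your deduction of (b) and (c) from (a) via the isometries $\widetilde{U}_\nu$ and $U_\nu$ is a clean organizational choice that the paper itself uses elsewhere (cf.\ the reduction preceding Theorem~\ref{thm:L2dense}).
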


\begin{proof}
Orthogonality in each case is essentially a consequence of orthogonality of the original system and the decomposition of
the associated Laplacian in terms of the derivative and its formal adjoint; see \cite[Lemma 2]{NoSt1}.
To be more precise, also some technical assumptions, like \cite[(2.7) and (2.8)]{NoSt1}, must be verified.
For instance, in case of the system $\{d_{\nu}\varphi_n^{\nu}\}$ the two assumptions just mentioned read as
$$
d_{\nu}\varphi_n^{\nu} \in L^2(d\eta_{\nu}), \qquad n \ge 2,
$$
and
\begin{equation} \label{T1}
\big\langle d_{\nu}\varphi_n^{\nu}, d_{\nu} \varphi_m^{\nu}\big\rangle_{d\eta_{\nu}}
= \big\langle d_{\nu}^* d_{\nu}\varphi_n^{\nu}, \varphi_m^{\nu}\big\rangle_{d\eta_{\nu}}, \qquad n,m \ge 2,
\end{equation}
respectively.
But all this is rather straightforward with the aid of basic asymptotics for functions of the differentiated and non-differentiated
systems given in \eqref{asy1}, \eqref{asy2}, \eqref{asy3} and Proposition \ref{prop:asd}.
To show \eqref{T1} and its analogues one integrates by parts where
an additional fact needed is \eqref{Ras}. The details are left to the reader.
\end{proof}

\begin{remark} \label{rem:norm}
The differentiated systems are not orthonormal. In fact, see \cite[Lemma 2]{NoSt1},
$$
\|d_{\nu}\varphi_n^{\nu}\|_{L^2(d\eta_{\nu})} = \|\delta_{\nu}\phi_n^{\nu}\|_{L^2(d\mu_{\nu})}
= \|\mathbb{D}_{\nu}\psi_n^{\nu}\|_{L^2(dx)} = \sqrt{\lambda_{n,\nu}^2 - \lambda_{1,\nu}^2}, \qquad n \ge 1.
$$
\end{remark}

We now turn to proving completeness of the differentiated systems, which is a more complicated issue.
\begin{theorem} \label{thm:L2dense}
Let $\nu > -1$.
\begin{itemize}
\item[(a)] The system $\{d_{\nu}\varphi_n^{\nu} : n \ge 2\}$ is complete in $L^2(d\eta_{\nu})$.
\item[(b)] The system $\{\delta_{\nu}\phi_n^{\nu} : n \ge 2\}$ is complete in $L^2(d\mu_{\nu})$.
\item[(c)] The system $\{\mathbb{D}_{\nu}\psi_n^{\nu} : n \ge 2\}$ is complete in $L^2(dx)$.
\end{itemize}
\end{theorem}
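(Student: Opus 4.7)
All three statements are equivalent via the unitary isomorphisms of Section \ref{sec:prel}: the maps $U_\nu\colon L^2(d\mu_\nu)\to L^2(dx)$ and $\widetilde{U}_\nu\colon L^2(d\mu_\nu)\to L^2(d\eta_\nu)$ send $\phi_n^\nu$ to $\psi_n^\nu$ and $\varphi_n^\nu$ respectively, and by construction (Section \ref{sec:der}) they intertwine $\delta_\nu$ with $\mathbb{D}_\nu$ and $d_\nu$; hence they carry the three differentiated systems to one another. It therefore suffices to prove, say,~(c). Suppose $f\in L^2(dx)$ satisfies $\langle f,\mathbb{D}_\nu\psi_n^\nu\rangle=0$ for every $n\ge 2$; the goal is $f\equiv 0$.

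Following the strategy of Hochstadt \cite{H}, I would recast these vanishing pairings as zeros, at $\{\pm\lambda_{n,\nu}\}_{n\ge 2}$, of a suitable entire function of a complex parameter~$z$ built from $f$ and the Bessel Wronskian-type combination
$$
W(x,z) := x^{1/2}\bigl[\lambda_{1,\nu} J_{\nu+1}(\lambda_{1,\nu}x)\,J_\nu(zx) \;-\; z\,J_{\nu+1}(zx)\,J_\nu(\lambda_{1,\nu}x)\bigr].
$$
Combining the explicit formula $\psi_n^\nu(x)\propto x^{1/2}J_\nu(\lambda_{n,\nu}x)$, the identity $R^\nu(x)=\lambda_{1,\nu}J_{\nu+1}(\lambda_{1,\nu}x)/J_\nu(\lambda_{1,\nu}x)$, and \eqref{eq:useful}, one verifies that $\mathbb{D}_\nu\psi_n^\nu(x)\cdot J_\nu(\lambda_{1,\nu}x)$ is a nonzero constant multiple of $W(x,\lambda_{n,\nu})$. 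Two structural facts facilitate the construction: $W(x,\lambda_{1,\nu})\equiv 0$, and $W(x,z)=\mathcal{O}(1-x)$ as $x\to 1^-$ whenever $J_\nu(z)=0$. Using this cancellation to handle the $(1-x)^{-1}$ singularity of $1/J_\nu(\lambda_{1,\nu}x)$ at the right endpoint, one produces a genuine entire function $F(z)$ with $F(\lambda_{n,\nu})=0$ for all $n\ge 2$.

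From \eqref{Jinf} and Cauchy--Schwarz in $x$, $F(z)/z^\nu$ is even entire of exponential type at most one. Since it vanishes at $\pm\lambda_{n,\nu}$ for $n\ge 2$ and the canonical product $J_\nu(z)/z^\nu \propto \prod_{k\ge 1}(1-z^2/\lambda_{k,\nu}^2)$ has the same density of zeros, Hadamard's factorization theorem forces
$$
\frac{F(z)}{z^\nu} \;=\; \frac{c\,J_\nu(z)}{z^\nu\,(1-z^2/\lambda_{1,\nu}^2)}
$$
for some constant $c\in\mathbb{C}$. An additional scalar relation — for example from the Taylor coefficient of $F$ at $z=0$, which reduces to an explicit nonzero multiple of $\int_0^1 f\,\psi_1^\nu\,dx$ and can be shown to vanish since $\psi_1^\nu$ lies in the closed span of $\{\mathbb{D}_\nu\psi_n^\nu\}_{n\ge 2}$ by a separate orthogonality argument — pins down $c=0$, so $F\equiv 0$. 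Expanding $F\equiv 0$ in powers of~$z$ produces a complete family of vanishing Bessel moments of $f$, from which $f\equiv 0$ follows.

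The main obstacle is the regularization step: the naive test kernel $W(x,z)/J_\nu(\lambda_{1,\nu}x)$ is singular at $x=1$ for generic~$z$, and transferring the orthogonality data into the clean statement $F(\lambda_{n,\nu})=0$ requires either a density/duality argument in a suitable weighted $L^2$ space or a splitting of $W(x,z)$ that exploits the identity $W(x,\lambda_{1,\nu})\equiv 0$. A secondary difficulty is the sharp growth bound for $F$ on the imaginary axis needed for Hadamard: each term inside $W(x,z)$ grows like $e^{|\Im z|}$ separately, and exponential type exactly one is recovered only through the precise cancellation between the two terms rather than a crude triangle inequality. These technicalities, rather than the general outline, are what make the argument ``rather involved'' and, as the authors note, cause the details to diverge substantially from Hochstadt's original.
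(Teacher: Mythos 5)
Your reduction to item (c) is the same as the paper's, but from there you take a genuinely different route: the paper never argues via entire functions of exponential type. It constructs the Green function $K_{\nu}(x,\xi)$ of $\mathbb{D}_{\nu}\mathbb{D}_{\nu}^{*}$ explicitly (Lemma \ref{lem:green}), shows the associated integral operator $T_{\nu}$ is compact, self-adjoint and injective on $L^2(dx)$, invokes the spectral theorem to obtain an orthogonal basis of eigenfunctions of $T_{\nu}$, and then proves (Lemmas \ref{lem:eigA} and \ref{lem:eig}) that every eigenfunction is a multiple of some $\mathbb{D}_{\nu}\psi_n^{\nu}$ with $n\ge 2$: boundary estimates eliminate the $Y_{\nu}$ component and force the spectral parameter to be a zero of $J_{\nu}$. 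Your identity $\mathbb{D}_{\nu}\psi_n^{\nu}(x)\,J_{\nu}(\lambda_{1,\nu}x)=c_n W(x,\lambda_{n,\nu})$ is correct, and a Paley--Wiener/Hadamard argument of the kind you outline is a recognized alternative for completeness of Bessel-type systems; however, as written the proposal has gaps that are not mere technicalities.

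Two are serious. First, $F(z)=\int_0^1 f(x)W(x,z)/J_{\nu}(\lambda_{1,\nu}x)\,dx$ is not defined for a general $f\in L^2(dx)$ and general $z$: by \eqref{Ras} the factor $R^{\nu}(x)=\lambda_{1,\nu}J_{\nu+1}(\lambda_{1,\nu}x)/J_{\nu}(\lambda_{1,\nu}x)$ grows like $(1-x)^{-1}$, which is not integrable against an arbitrary $L^2$ function, and the cancellation at $x=1$ occurs only when $J_{\nu}(z)=0$. You flag this as the main obstacle but do not resolve it, and until a concrete regularization (e.g.\ replacing $J_{\nu}(zx)$ by $J_{\nu}(zx)-J_{\nu}(z)h(x)$ with $h(1)=1$, which leaves the values at $z=\lambda_{n,\nu}$ unchanged) is carried out together with the accompanying growth bounds, there is no entire function to which Hadamard's theorem can be applied. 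Second, your determination of the constant $c$ is circular: you invoke $\int_0^1 f\,\psi_1^{\nu}\,dx=0$ on the grounds that $\psi_1^{\nu}$ lies in the closed span of $\{\mathbb{D}_{\nu}\psi_n^{\nu}\}_{n\ge 2}$, but that containment is precisely (a piece of) the completeness being proved, and no independent argument for it is supplied; since a codimension count a priori allows the orthocomplement of $\spann\{\mathbb{D}_{\nu}\psi_n^{\nu}:n\ge2\}$ to be one-dimensional, ruling out this single dimension is the heart of the matter, not an afterthought. In addition, the Hadamard step needs more than a count of zeros: to conclude that the quotient of $F$ by the canonical product is constant one needs a lower bound for $|J_{\nu}|$ away from its zeros or a Phragm\'en--Lindel\"of argument, neither of which is indicated. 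As it stands, the proposal is an outline of a viable alternative strategy rather than a proof.
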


It is straightforward to see that items (a)--(c) in this theorem are equivalent. So it is enough to prove one of them
and for technical reasons it is convenient to focus on (c). We will adapt the method used by Hochstadt \cite{H}.

Observe that each $\mathbb{D}_{\nu}\psi_n^{\nu}$ is an eigenfunction of the operator
\begin{align}\label{eq:DD*new}
\mathbb{D}_{\nu}\mathbb{D}_{\nu}^{*} = \Big( \frac{d}{dx}+q_{\nu}\Big)\Big( -\frac{d}{dx}+q_{\nu}\Big)
	= -\frac{d^2}{dx^2} +q'_{\nu} + q_{\nu}^2,
\end{align}
with the corresponding eigenvalue $\lambda_{n,\nu}^2-\lambda_{1,\nu}^2$;
here
$$
q_{\nu}(x) = -\frac{\nu+1/2}{x} + R^{\nu}(x),
$$
see \eqref{eq:derleb}.
Hochstadt's strategy is to represent the (formal) inverse of $\mathbb{D}_{\nu}\mathbb{D}_{\nu}^*$
as an integral operator and analyze its
properties. Then the density will follow by a functional analysis argument.

In the first step we construct a corresponding Green function. Denote by $\bm{\delta}$ the Dirac delta.
\begin{lemma} \label{lem:green}
Let $\nu > -1$.
There exists a function $K_{\nu} \colon (0,1)\times (0,1) \mapsto \mathbb{R}$ such that for each $\xi \in (0,1)$ fixed
$K_{\nu}(\cdot,\xi)$ is continuous on $(0,1)$ and the identity
\begin{equation} \label{eq:green}
\mathbb{D}_{\nu}\mathbb{D}_{\nu}^{*} K_{\nu}(\cdot,\xi) = \bm{\delta}(\cdot - \xi)
\end{equation}
holds in the sense of distributions. Moreover,
\begin{equation} \label{KL2}
\int_0^1\int_0^1 K_{\nu}^2(x,\xi)\, d\xi dx < \infty.
\end{equation}
\end{lemma}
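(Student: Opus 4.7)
My plan is to construct $K_\nu$ explicitly as the Green function of the second-order ODE operator
$$\mathbb{D}_\nu\mathbb{D}_\nu^{*} = -\frac{d^2}{dx^2} + q_\nu' + q_\nu^2, \qquad q_\nu(x) = -\frac{\nu+1/2}{x}+R^\nu(x),$$
by splicing two boundary-adapted homogeneous solutions at $x=\xi$.

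First I would find two linearly independent solutions of $\mathbb{D}_\nu\mathbb{D}_\nu^{*} u=0$. Since $\mathbb{D}_\nu\psi_1^\nu=0$, we have $q_\nu=-(\psi_1^\nu)'/\psi_1^\nu$, from which a direct check gives $\mathbb{D}_\nu^{*}(1/\psi_1^\nu)=0$; hence $u_1:=1/\psi_1^\nu$ is one solution. Substituting $u=f/\psi_1^\nu$ reduces the equation to $[(\psi_1^\nu)^{-2}f']'=0$, so the independent choice $f(x)=\int_0^x(\psi_1^\nu(s))^2\,ds$ produces
$$u_2(x):=\frac{1}{\psi_1^\nu(x)}\int_0^x (\psi_1^\nu(s))^2\, ds,\qquad W(u_1,u_2)=1.$$
I then take the boundary-adapted pair $v_0:=u_2$ and $v_1:=u_1-u_2$. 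Using $\int_0^1(\psi_1^\nu)^2\, ds=1$ (orthonormality of $\{\psi_n^\nu\}$ in $L^2(dx)$) one rewrites $v_1(x)=(\psi_1^\nu(x))^{-1}\int_x^1(\psi_1^\nu(s))^2\,ds$. The asymptotics \eqref{asy3} ($\psi_1^\nu(x)\simeq x^{\nu+1/2}$ at $0^+$ and $\psi_1^\nu(x)\simeq 1-x$ at $1^-$) give $v_0(x)\simeq x^{\nu+3/2}$ and $v_1(x)\simeq x^{-\nu-1/2}$ as $x\to 0^+$, and $v_0(x)\simeq(1-x)^{-1}$ and $v_1(x)\simeq(1-x)^2$ as $x\to 1^-$; thus $v_0$ (respectively $v_1$) decays fast at the left (resp.\ right) endpoint.

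Next I would set $K_\nu(x,\xi):= v_0(x\wedge\xi)\,v_1(x\vee\xi)$; this is continuous in $x$ on $(0,1)$ for each fixed $\xi$ and solves the homogeneous equation away from $\{x=\xi\}$. The jump of $\partial_x K_\nu(\cdot,\xi)$ at $\xi$ equals $W(v_0,v_1)=-W(u_1,u_2)=-1$, which, combined with the minus sign in front of $d^2/dx^2$ in $\mathbb{D}_\nu\mathbb{D}_\nu^{*}$, yields $\mathbb{D}_\nu\mathbb{D}_\nu^{*} K_\nu(\cdot,\xi)=\bm{\delta}(\cdot-\xi)$ in the distributional sense.

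Finally, \eqref{KL2} follows by symmetry from
$$\int_0^1\!\!\int_0^1 K_\nu^2(x,\xi)\,dx\,d\xi = 2\int_0^1 v_1^2(\xi)\int_0^\xi v_0^2(x)\,dx\,d\xi;$$
the inner integral is $O(\xi^{2\nu+4})$ near $\xi=0$ and $O((1-\xi)^{-1})$ near $\xi=1$, so after multiplying by $v_1^2(\xi)$ the integrand behaves like $\xi^3$ near $0$ and $(1-\xi)^3$ near $1$, and is therefore integrable. The only point requiring care in this scheme is picking the correct constants of integration so that each of $v_0,v_1$ vanishes quickly enough at its endpoint to dominate the blowup of the other factor there; everything else is routine Sturm--Liouville bookkeeping.
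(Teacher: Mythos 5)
Your construction is correct and is essentially the paper's own proof: the same particular solution $1/\psi_1^{\nu}$, the same reduction-of-order second solution $F/\psi_1^{\nu}$ with $F(x)=\int_0^x(\psi_1^{\nu})^2$, the same boundary-adapted pair, the same Green function $K_{\nu}(x,\xi)=v_0(x\wedge\xi)v_1(x\vee\xi)$, the same jump/continuity matching, and the same endpoint asymptotics for verifying \eqref{KL2}. The only cosmetic differences are that the paper invokes Abel's theorem for the constancy of the Wronskian (you compute it directly) and additionally records a closed Bessel-function formula for $F$, which it notes is not needed for the proof.
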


To prove Lemma \ref{lem:green} we need to invoke a classic result from ODE theory. Recall that \emph{Wronskian} of a pair
of functions $(y_1,y_2)$ defined on some open interval is given by
$$
\mathcal{W}(y_1,y_2) = \det \left( \begin{array}{cc} y_1 & y_2 \\ y'_1 & y'_2 \end{array} \right)
	= y_1 y'_2 - y'_1 y_2.
$$
The lemma below is known as Abel's theorem, see e.g.\ \cite[Lemma 3.11]{T}.
\begin{lemma} \label{lem:abel}
Let $y_1$ and $y_2$ be any solutions of the equation
$$
y''(x) + a(x)y'(x) + b(x) y(x) = 0, \qquad x \in I,
$$
where $a$ and $b$ are continuous functions on an open interval $I$. Then
$$
\mathcal{W}(y_1,y_2)(x) = C \exp\big( - F(x) \big), \qquad x \in I,
$$
where $C$ is a constant and $F$ is a primitive function of $a$.
In particular, either $\mathcal{W}(y_1,y_2)$ is identically zero on $I$ or it has no zeros on $I$.
\end{lemma}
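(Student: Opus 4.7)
The plan is to derive a first-order linear ODE satisfied by the Wronskian itself, and then integrate it. Since $y_1, y_2$ are twice differentiable (they solve a second-order ODE with continuous coefficients, so they are automatically $C^2$ on $I$), the function $\mathcal{W}(y_1,y_2) = y_1 y_2' - y_1' y_2$ is differentiable on $I$, and one computes directly
$$
\frac{d}{dx}\mathcal{W}(y_1,y_2) = y_1' y_2' + y_1 y_2'' - y_1'' y_2 - y_1' y_2' = y_1 y_2'' - y_1'' y_2,
$$
with the cross terms $y_1' y_2'$ canceling.

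The key step is to use the ODE to replace the second derivatives. Substituting $y_i'' = -a\, y_i' - b\, y_i$ (for $i=1,2$) into the expression above, the terms involving $b$ cancel (they yield $-b(y_1 y_2 - y_1 y_2) = 0$) and the terms involving $a$ combine to give
$$
\frac{d}{dx}\mathcal{W}(y_1,y_2)(x) = -a(x)\, \mathcal{W}(y_1,y_2)(x), \qquad x \in I.
$$
This is a first-order linear homogeneous ODE for $\mathcal{W}(y_1,y_2)$ with continuous coefficient $-a$ on the open interval $I$. Its general solution is well known: fixing any primitive $F$ of $a$ on $I$ (which exists since $a$ is continuous), the function $x \mapsto \mathcal{W}(y_1,y_2)(x)\, \exp(F(x))$ has zero derivative on $I$ by the product rule combined with the ODE above, hence is constant on the connected interval $I$. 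Calling this constant $C$ yields $\mathcal{W}(y_1,y_2)(x) = C \exp(-F(x))$ on $I$.

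The dichotomy in the last sentence of the lemma is then immediate: the exponential factor $\exp(-F(x))$ is strictly positive throughout $I$, so either $C = 0$ and $\mathcal{W}(y_1,y_2) \equiv 0$ on $I$, or $C \neq 0$ and $\mathcal{W}(y_1,y_2)(x) \neq 0$ for every $x \in I$. There is no real obstacle here; the only point requiring care is ensuring that we work on a single connected interval $I$ so that the constancy argument for $\mathcal{W}(y_1,y_2)\exp(F)$ actually yields a single constant rather than one per connected component, and this is built into the hypothesis that $I$ is an interval.
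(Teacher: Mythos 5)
Your proof is correct and complete: the computation $\mathcal{W}' = -a\,\mathcal{W}$ followed by integration of this first-order linear ODE is the standard proof of Abel's theorem, and your handling of the dichotomy via the strict positivity of $\exp(-F)$ and the connectedness of $I$ is exactly right. The paper itself gives no proof of this lemma, citing instead \cite[Lemma 3.11]{T}, where the same classical argument appears, so your proposal matches the intended (textbook) reasoning.
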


\begin{proof}[{Proof of Lemma \ref{lem:green}}]
We begin the construction of $K_{\nu}(x,\xi)$ with the observation that \eqref{eq:green} forces $K_{\nu}(x,\xi)$
to satisfy, for each $\xi \in (0,1)$ fixed,
$$
\big( \mathbb{D}_{\nu} \mathbb{D}_{\nu}^* K_{\nu}(\cdot,\xi) \big)(x) = 0, \qquad x \in (0,1)\setminus \{\xi\}.
$$
Therefore we shall look for $K_{\nu}(x,\xi)$ of the form
\begin{equation} \label{eq:casgr}
K_{\nu}(x,\xi) = 	\begin{cases}
										z_1(x)A(\xi), & x \in (0,\xi], \\
										z_2(x)B(\xi), & x \in [\xi,1),
									\end{cases}
\end{equation}
where $z_1$ and $z_2$ form a basis of fundamental solutions of the homogeneous equation
\begin{equation} \label{eq:homo}
\mathbb{D}_{\nu} \mathbb{D}_{\nu}^* y = \Big( - \frac{d^2}{dx^2} + q'_{\nu} + q_{\nu}^2 \Big)y = 0,
\end{equation}
and $A,B$ are functions to be determined later.

A particular solution to \eqref{eq:homo} is, cf.\ \cite[Sec.\ 2.1.9, Eq.\ 30]{handbook},
$$
y_1(x) = \exp\bigg( \int_{1/2}^x q_{\nu}(u)\, du\bigg);
$$
this fact can easily be verified directly.
Using \eqref{eq:Rder} and the identity $\psi^{\nu}_1(x) = x^{\nu+1/2}\phi_1^{\nu}(x)$ one computes
$$
y_1(x) = \exp\bigg( \int_{1/2}^x \bigg[-(\nu+1/2)\log u+\log \frac{1}{\phi_1^\nu(u)}\bigg]'\, du\bigg)
= \exp\bigg( \int_{1/2}^x \bigg[\log \frac{1}{\psi_1^{\nu}(u)}\bigg]'\, du\bigg)=\frac{1}{\psi_1^{\nu}(x)},
$$
where in the last step we have neglected a multiplicative constant.

To complete $y_1$ to a basis of solutions of \eqref{eq:homo} we employ the method known as the \emph{reduction of order}.
Assume that $y_2 = v y_1$ for some differentiable function $v$. A straightforward computation shows that
$$
\mathcal{W}(y_1,y_2) = v' y_1^2.
$$
On the other hand, by Lemma \ref{lem:abel},
$$
\mathcal{W}(y_1,y_2) = C
$$
for some constant $C$. Combining the above equations we get
$$
v' = C y_1^{-2}.
$$
Integrating this identity produces
$$
v(x) = C \int_0^x y_1^{-2}(u)\, du + D,
$$
with $D$ being another constant. Setting here $C=1$ and $D=0$ we obtain
$$
y_2 = F y_1,
$$
where
$$
F(x) = \int_0^{x} \big[ \psi_1^{\nu}(u)\big]^2\, du.
$$

We now express explicitly the auxiliary function $F$. This is not necessarily needed to prove the lemma, but it
is of general interest to have an explicit formula for $K_{\nu}(x,\xi)$ in the end. Using the explicit form
of $\psi_1^{\nu}$ we see that
$$
F(x) = \frac{2}{J_{\nu+1}^2(\lambda_{1,\nu})} \int_0^x u J_{\nu}^2(\lambda_{1,\nu}u)\, du.
$$
By means of a direct differentiation and formulae \eqref{diff_J} and \eqref{diff_J_b} one checks that
$$
2\int z J_{\nu}^2(z)\, dz = z^2 J_{\nu}^2(z) - 2\nu z J_{\nu}(z) J_{\nu+1}(z) + z^2 J_{\nu+1}^2(z) + \mathcal{C},
$$
where $\mathcal{C}$ is a constant of integration.
In view of \eqref{J0}, the above primitive function has value $\mathcal{C}$ at $z=0$, thus we get
\begin{align*}
F(x) & = \frac{2}{\lambda_{1,\nu}^2 J_{\nu+1}^2(\lambda_{1,\nu})} \int_0^{\lambda_{1,\nu}x} z J_{\nu}^2(z)\, dz \\
	& = \frac{1}{J_{\nu+1}^2(\lambda_{1,\nu})} \Big[ x^2 J_{\nu}^2(\lambda_{1,\nu}x) - \frac{2\nu x}{\lambda_{1,\nu}}
		J_{\nu}(\lambda_{1,\nu}x) J_{\nu+1}(\lambda_{1,\nu}x) + x^2 J_{\nu+1}^2(\lambda_{1,\nu}x) \Big].
\end{align*}
Observe that $F(1)=1$ (since $J_\nu(\lambda_{1, \nu})=0$) and, obviously, $F(0)=0$.

We now consider a modified basis of solutions
\begin{align*}
z_1 & = y_2, \\
z_2 & = y_1 - y_2.
\end{align*}
Observe that $\mathcal{W}(z_1,z_2) = -\mathcal{W}(y_1,y_2) = -1$.
To continue the construction of $K_{\nu}(x,\xi)$ notice that,
for any fixed $\xi \in (0,1)$, using \eqref{eq:DD*new} and then \eqref{eq:green} one obtains
\begin{align*}
\frac{\partial}{\partial x}K_{\nu}(x,\xi)\Big|_{x=\xi^{-}} - \frac{\partial}{\partial x}K_{\nu}(x,\xi)\Big|_{x=\xi^{+}}
& = - \lim_{\epsilon \to 0^+} \int_{\xi-\epsilon}^{\xi+\epsilon} \frac{\partial^2}{\partial x^2} K_{\nu}(x,\xi)\, dx \\
& = \lim_{\epsilon \to 0^+} \int_{\xi-\epsilon}^{\xi+\epsilon} \mathbb{D}_{\nu} \mathbb{D}_{\nu}^{*} K_{\nu}(x,\xi)\, dx = 1.
\end{align*}
This combined with \eqref{eq:casgr} forces
$$
z'_1(\xi) A(\xi) - z'_2(\xi) B(\xi) = 1.
$$
On the other hand, the required continuity at $x=\xi$ implies
$$
z_1(\xi) A(\xi) = z_2(\xi) B(\xi).
$$
These two equations form a system whose solution for $A$ and $B$ is
\begin{align*}
A(\xi) & = \frac{- z_2(\xi)}{\mathcal{W}(z_1,z_2)} = z_2(\xi) = y_1(\xi) - y_2(\xi) = \big(1-F(\xi)\big) y_1(\xi)
	= \frac{1-F(\xi)}{\psi_1^{\nu}(\xi)}, \\
B(\xi) & = \frac{- z_1(\xi)}{\mathcal{W}(z_1,z_2)} = z_1(\xi) = y_2(\xi) = F(\xi) y_1(\xi) = \frac{F(\xi)}{\psi_1^{\nu}(\xi)}.
\end{align*}
Plugging this to \eqref{eq:casgr} we arrive at the formula
\begin{equation} \label{greenf}
K_{\nu}(x,\xi) = 	\begin{cases}
										\frac{F(x)(1-F(\xi))}{\psi_1^{\nu}(x)\psi_1^{\nu}(\xi)}, & x \in (0,\xi], \\
										\frac{(1-F(x))F(\xi)}{\psi_1^{\nu}(x)\psi_1^{\nu}(\xi)}, & x \in [\xi,1).
									\end{cases}
\end{equation}
Note that this formula is explicit, in view of the explicit expressions for $F$ and $\psi_1^{\nu}$.

By the construction, $K_{\nu}(x,\xi)$ possesses the asserted properties, it remains only to verify \eqref{KL2}.
By \eqref{asy3},
\begin{equation} \label{symp1}
\psi_1^{\nu}(x) \simeq x^{\nu+1/2}(1-x), \qquad x \in (0,1).
\end{equation}
Further,
\begin{align}
F(x) & = \int_0^x \big[ \psi_1^{\nu}(u)\big]^2\, du \simeq x^{2\nu+2}, \qquad x \in (0,1), \label{symp2} \\
1-F(x) & = \int_x^1 \big[ \psi_1^{\nu}(u)\big]^2\, du \simeq (1-x)^3, \qquad x \in (0,1). \label{symp3}
\end{align}
Consequently,
$$
K_{\nu}(x,\xi) \simeq \begin{cases}
												\frac{x^{\nu+3/2}(1-\xi)^2}{(1-x)\xi^{\nu+1/2}}, & x \in (0,\xi],\\
												\frac{(1-x)^2 \xi^{\nu+3/2}}{x^{\nu+1/2}(1-\xi)}, & x \in [\xi,1).
											\end{cases}								
$$
Now it is elementary to check \eqref{KL2}. Since $K_{\nu}(x,\xi) = K_{\nu}(\xi,x)$, we may restrict attention to $0 < \xi < x$.
Then
$$
\int_0^1 \int_0^x K_{\nu}^2(x,\xi)\, d\xi dx \simeq \int_0^1 \int_0^x \frac{(1-x)^4 \xi^{2\nu+3}}{x^{2\nu+1}(1-\xi)^2}\, d\xi dx
	\simeq \int_0^1 \frac{(1-x)^4 x^{2\nu+4}}{x^{2\nu+1}(1-x)}\, dx = \int_0^1 x^3(1-x)^3\, dx < \infty.
$$
This finishes the proof.
\end{proof}

Let $K_{\nu}(x,\xi)$ be the Green function defined in \eqref{greenf}.
For $\nu > -1$ consider the integral operator
$$
T_{\nu}f(x) = \int_0^1 K_{\nu}(x,\xi) f(\xi)\, d\xi
$$
acting on $L^2(dx)$. In view of \eqref{KL2} this operator is compact.
Further, since the integral kernel is symmetric, $T_{\nu}$ is self-adjoint.
Moreover, the null space of $T_{\nu}$ is trivial. Indeed, assume that $T_{\nu}f = 0$ for some $f \in L^2(dx)$.
Then, by the explicit formula \eqref{greenf},
$$
\frac{1-F(x)}{\psi_1^{\nu}(x)} \int_0^x \frac{F(\xi)}{\psi_1^{\nu}(\xi)} f(\xi)\, d\xi +
\frac{F(x)}{\psi_1^{\nu}(x)} \int_x^1 \frac{1-F(\xi)}{\psi_1^{\nu}(\xi)} f(\xi)\, d\xi = 0, \qquad x \in (0,1).
$$
Multiplying this equation by $\psi_1^{\nu}(x)/F(x)$ and then differentiating both sides we get
$$
\bigg( \frac{1-F(x)}{F(x)}\bigg)'
\int_0^{x} \frac{F(\xi)}{\psi_1^{\nu}(\xi)} f(\xi)\, d\xi = 
-\bigg( \frac{\psi_1^{\nu}(x)}{F(x)}\bigg)^2
\int_0^{x} \frac{F(\xi)}{\psi_1^{\nu}(\xi)} f(\xi)\, d\xi =
0, \qquad x \in (0,1).
$$
This clearly implies that $f=0$.

From a general theory, see \cite[p.\,213--214]{H}, it follows that the set of eigenfunctions of $T_{\nu}$
forms an orthogonal basis in $L^2(dx)$. Therefore Theorem \ref{thm:L2dense}(c) will be proved once we show the following.

\begin{lemma} \label{lem:eig}
Let $\nu > -1$. Each eigenfunction of $T_{\nu}$ is of the form $c\, \mathbb{D}_{\nu}\psi_n^{\nu}$ for some $n \ge 2$
and some constant $c \neq 0$.
\end{lemma}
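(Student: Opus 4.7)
The plan is to start from an eigenfunction $f \in L^2(dx)$ of $T_\nu$ with eigenvalue $\mu$ and show that $f$ must be a scalar multiple of some $\mathbb{D}_\nu \psi_n^\nu$ with $n \ge 2$. Since $T_\nu$ is injective (as established just before the statement), $\mu \neq 0$. Applying $\mathbb{D}_\nu \mathbb{D}_\nu^*$ to the identity $\mu f = T_\nu f$ and invoking \eqref{eq:green} gives, in the sense of distributions, $\mathbb{D}_\nu \mathbb{D}_\nu^* f = \lambda f$ with $\lambda := 1/\mu$; elliptic regularity for this second-order ODE on $(0,1)$ promotes $f$ to a smooth solution on the open interval.

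Next I would introduce $g := \mathbb{D}_\nu^* f$, for which a direct computation yields
$$
\mathbb{D}_\nu^* \mathbb{D}_\nu g = \mathbb{D}_\nu^*(\mathbb{D}_\nu \mathbb{D}_\nu^* f) = \lambda \mathbb{D}_\nu^* f = \lambda g,
$$
and hence, via $\mathbb{L}_\nu = \lambda_{1,\nu}^2 + \mathbb{D}_\nu^* \mathbb{D}_\nu$, one has $\mathbb{L}_\nu g = (\lambda + \lambda_{1,\nu}^2) g$. Provided I can verify that $g$ lies in $L^2(dx)$ and exhibits the Fourier-Bessel boundary behavior $g(x) = O(x^{\nu+1/2})$ as $x \to 0^+$ and $g(x) = O(1-x)$ as $x \to 1^-$, the standard Sturm–Liouville uniqueness on $(0,1)$ (or equivalently, the completeness of the Fourier-Bessel system in $L^2(dx)$ established earlier in the literature) forces $g = c \psi_n^\nu$ for some $n \ge 1$ and some constant $c$, with $\lambda = \lambda_{n,\nu}^2 - \lambda_{1,\nu}^2$. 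Applying $\mathbb{D}_\nu$ then yields $\lambda f = \mathbb{D}_\nu g = c\, \mathbb{D}_\nu \psi_n^\nu$, so $f = (c/\lambda)\,\mathbb{D}_\nu \psi_n^\nu$. Since $\lambda \neq 0$ forces $c \neq 0$ (else $f = 0$) and $\mathbb{D}_\nu \psi_1^\nu = 0$, we must have $n \ge 2$, giving the claim.

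The main technical step is extracting the boundary behavior of $f$ (and from it, of $g$). From $f = \mu^{-1} T_\nu f$, the explicit formula \eqref{greenf}, and the asymptotics \eqref{symp1}--\eqref{symp3}, the two pieces of the integral decompose as
$$
f(x) = \frac{1}{\mu}\frac{F(x)}{\psi_1^\nu(x)}\int_x^1 \frac{1-F(\xi)}{\psi_1^\nu(\xi)} f(\xi)\, d\xi + \frac{1}{\mu}\frac{1-F(x)}{\psi_1^\nu(x)}\int_0^x \frac{F(\xi)}{\psi_1^\nu(\xi)} f(\xi)\, d\xi,
$$
and the Cauchy--Schwarz inequality combined with $f \in L^2(dx)$ shows that both integrals converge absolutely for $x \in (0,1)$ and remain bounded (in fact, have finite limits) as $x$ approaches each endpoint. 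Consequently $f(x) = O(x^{\nu+3/2})$ near $0$ and $f(x) = O((1-x)^2)$ near $1$. Substituting these into the formula $g = -f' - \frac{\nu+1/2}{x} f + R^\nu(x) f$ from \eqref{eq:derleb} and using \eqref{Ras}, with the ODE satisfied by $f$ producing the required control on $f'$, yields the asserted $g(x) = O(x^{\nu+1/2})$ near $0$ and $g(x) = O(1-x)$ near $1$; in particular $g \in L^2(dx)$.

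The hard part will be making the derivation of these pointwise endpoint asymptotics for $f$ — and the corresponding control on $f'$ — sufficiently rigorous to license calling $g$ a genuine $L^2$-eigenfunction of $\mathbb{L}_\nu$. Once this is secured, the matching of both endpoint behaviors with those of the Fourier-Bessel basis (cf.\ \eqref{asy3}) identifies $g$ as a scalar multiple of a unique $\psi_n^\nu$, and the remainder of the argument is algebraic.
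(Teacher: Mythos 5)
The first half of your argument --- passing from the Green function identity \eqref{eq:green} to $\mathbb{D}_{\nu}\mathbb{D}_{\nu}^{*}f=\lambda f$, setting $g=\mathbb{D}_{\nu}^{*}f$, recognizing $\mathbb{L}_{\nu}g=(\lambda+\lambda_{1,\nu}^2)g$ and recovering $f=\lambda^{-1}\mathbb{D}_{\nu}g$ --- is exactly the paper's. The gap is in the identification step. You propose to conclude $g=c\,\psi_n^{\nu}$ from the bounds $g=O(x^{\nu+1/2})$ at $0$ and $g=O(1-x)$ at $1$ via ``Sturm--Liouville uniqueness / completeness of $\{\psi_n^{\nu}\}$''. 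Completeness of an orthonormal system says nothing about which solutions of the ODE lie in the domain of the relevant self-adjoint extension, and the endpoint bound you propose cannot do that job: the solution space of $\mathbb{L}_{\nu}g=\kappa g$ is spanned by $\sqrt{x}\,J_{\nu}(\lambda x)\simeq x^{\nu+1/2}$ and $\sqrt{x}\,Y_{\nu}(\lambda x)\simeq x^{-\nu+1/2}$ (up to a logarithm when $\nu=0$), and for $-1<\nu<0$ the second solution is \emph{also} $O(x^{\nu+1/2})$ --- it in fact vanishes faster. So for negative $\nu$ your condition at $0$ excludes nothing, the Dirichlet condition at $1$ kills only one dimension, and a nontrivial admissible $g$ survives for \emph{every} $\lambda>0$; neither the form of $g$ nor the quantization $\lambda=\lambda_{n,\nu}$ follows. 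A secondary slip: the claim that both integrals in the formula for $T_{\nu}f$ stay bounded as $x\to0^{+}$ is false for $\nu\ge0$, since $(1-F(\xi))/\psi_1^{\nu}(\xi)\simeq(1-\xi)^2\xi^{-\nu-1/2}$ is then not square-integrable near $0$; this is precisely why Lemma \ref{lem:eigA} carries a case distinction, and your uniform claim $f=O(x^{\nu+3/2})$ is not what Cauchy--Schwarz yields when $\nu>0$.

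The paper avoids all of this by performing the exclusion at the level of $f$ rather than $g$: it writes $f=c_1\mathbb{D}_{\nu}\big(\sqrt{x}J_{\nu}(\lambda x)\big)+c_2\mathbb{D}_{\nu}\big(\sqrt{x}Y_{\nu}(\lambda x)\big)$ and exploits the fact that $\mathbb{D}_{\nu}$, being built to cancel the leading $x^{\nu+1/2}$ behavior, maps the first solution to $O(x^{\nu+3/2})$ but the second to $\simeq x^{-\nu-1/2}$. Since $-\nu-1/2<\nu+3/2$ for all $\nu>-1$, the eigenfunction bound of Lemma \ref{lem:eigA} forces $c_2=0$ on the whole parameter range, and the quantization of $\lambda$ is then read off from $\mathbb{D}_{\nu}^{*}f(1)=0$, which gives $c_1(\lambda^2-\lambda_{1,\nu}^2)J_{\nu}(\lambda)=0$. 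If you want to keep your route through $g$, you would still have to carry out this two-solution analysis and transfer the exclusion back through $\mathbb{D}_{\nu}$, so the detour buys nothing; as written, the argument does not close for $-1<\nu\le 0$.
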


To prove this we need an auxiliary result.
\begin{lemma} \label{lem:eigA}
Let $\nu > -1$ and let $f$ be an eigenfunction of $T_{\nu}$. Then
$$
|f(x)| \lesssim \big[ x (1-x)\big]^{3/2}
	\begin{cases}
		1, & \nu > 0, \\
		\log^{1/2}\frac{2}x, & \nu = 0, \\
		x^{\nu}, & \nu < 0,
	\end{cases}
$$
uniformly in $x \in (0,1)$. Furthermore,
$$
|\mathbb{D}_{\nu}^{*}f(x)| \lesssim (1-x)
				\begin{cases}
					x^{5/2}, & \nu > 2, \\
					x^{5/2}\log\frac{2}{x}, & \nu =2, \\					
					x^{\nu+1/2}, & \nu < 2,
				\end{cases}
$$
uniformly in $x \in (0,1)$. In particular, $\mathbb{D}_{\nu}^{*}f(1) := \lim_{x \to 1^{-}} \mathbb{D}_{\nu}^{*}f(x) = 0$.
\end{lemma}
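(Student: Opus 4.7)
The plan is as follows. Since the null space of $T_\nu$ is trivial, an eigenfunction $f$ corresponds to a nonzero eigenvalue $\lambda$, and $f = \lambda^{-1} T_\nu f$ gives the integral representation
$$
f(x) = \frac{1}{\lambda} \int_0^1 K_\nu(x,\xi) f(\xi) \, d\xi.
$$
For the first bound I would apply the Cauchy-Schwarz inequality together with $f \in L^2(dx)$ to get $|f(x)| \lesssim \|K_\nu(x,\cdot)\|_{L^2(d\xi)}$, and then estimate this $L^2$ norm by means of the explicit pointwise asymptotics for $K_\nu(x,\xi)$ already extracted in the proof of Lemma~\ref{lem:green} from \eqref{symp1}--\eqref{symp3}. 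Splitting $\int_0^1 = \int_0^x + \int_x^1$ and integrating, one finds a bound of the type $[x(1-x)]^3$ times a correction factor which is bounded for $\nu > 0$, grows logarithmically for $\nu = 0$ (this is where $\int_x^{1/2} \xi^{-1}\,d\xi$ enters), and behaves like $x^{2\nu}$ for $\nu \in (-1,0)$ (from $\int_x^{1/2} \xi^{-(2\nu+1)}\,d\xi$). Taking square roots yields the stated bound on $|f(x)|$.

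For the second bound the key observation is that $y_1 = 1/\psi_1^\nu$ is annihilated by $\mathbb{D}_\nu^*$. Indeed, since $\psi_1^\nu$ is the bottom eigenfunction of $\mathbb{L}_\nu$ with eigenvalue $\lambda_{1,\nu}^2$, one has $\mathbb{D}_\nu \psi_1^\nu = 0$, which in view of \eqref{eq:derleb} gives $(\psi_1^\nu)'/\psi_1^\nu = -q_\nu$ and hence $\mathbb{D}_\nu^*(1/\psi_1^\nu) = 0$. Using the product rule $\mathbb{D}_\nu^*(Fg) = -F'g + F\mathbb{D}_\nu^* g$ together with $F' = (\psi_1^\nu)^2$, I would compute $\mathbb{D}_\nu^* z_1 = -\psi_1^\nu$ and $\mathbb{D}_\nu^* z_2 = \psi_1^\nu$, obtaining
$$
\mathbb{D}_\nu^* f(x) = \frac{\psi_1^\nu(x)}{\lambda}\bigg[ \int_0^x \frac{F(\xi)}{\psi_1^\nu(\xi)}\, f(\xi)\, d\xi - \int_x^1 \frac{1-F(\xi)}{\psi_1^\nu(\xi)}\, f(\xi)\, d\xi \bigg].
$$
Substituting the first bound on $|f|$ and using \eqref{symp1}--\eqref{symp3}, the problem reduces to estimating $\int_0^x \xi^{\nu+3}(1-\xi)^{1/2} g_\nu(\xi)\, d\xi$ and $\int_x^1 \xi^{1-\nu}(1-\xi)^{7/2} g_\nu(\xi)\, d\xi$, where $g_\nu$ absorbs the correction factor of the first bound. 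The first integral is $O(1)$ in $x$; the second exhibits the threshold behavior at $\nu = 2$ (from $\int_x^{1/2} \xi^{1-\nu}\,d\xi$), giving the logarithmic correction at $\nu = 2$ and the factor $x^{2-\nu}$ for $\nu > 2$. Multiplying by $\psi_1^\nu(x) \simeq x^{\nu+1/2}(1-x)$ produces the claimed three-case bound. The vanishing of $\mathbb{D}_\nu^*f$ at $x=1$ is then immediate since $\psi_1^\nu(1) = 0$ and the bracket stays bounded as $x \to 1^-$ (the second integral tends to zero and the first converges).

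The main obstacle is the bookkeeping in the case analysis, in particular locating the critical thresholds ($\nu = 0$ for the first bound and $\nu = 2$ for the second) where integrable singularities turn into logarithmic ones, and tracking the correct power of $x$ and $1-x$ in each regime. The one step requiring genuine insight rather than routine computation is the identity $\mathbb{D}_\nu^*(1/\psi_1^\nu) = 0$: once it is noticed, the action of $\mathbb{D}_\nu^*$ on the Green function collapses to multiplication by $\pm\psi_1^\nu$ and the whole second part of the lemma is brought into a form amenable to the same elementary estimates used for the first part.
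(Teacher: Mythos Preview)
Your proposal is correct and follows essentially the same approach as the paper's proof: Cauchy--Schwarz combined with the explicit asymptotics \eqref{symp1}--\eqref{symp3} for the first bound, and the same closed-form expression for $\mathbb{D}_\nu^* (T_\nu f)$ followed by the same case analysis for the second. Your derivation of that expression via $\mathbb{D}_\nu^*(1/\psi_1^\nu)=0$ and the product rule is a slightly cleaner way to organize what the paper calls ``simply use the explicit form of $T_\nu$ and compute,'' but the resulting formula and the subsequent estimates are identical.
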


\begin{proof}
Since $T_{\nu}f = \Lambda f$ for some $\Lambda \neq 0$, it is enough we show the lemma with $f$ replaced by $T_{\nu}f$.

By the Schwarz inequality, 
$$
|T_{\nu}f(x)| \le \| K_{\nu}(x,\cdot)\|_{L^2(d\xi)} \|f\|_{L^2(d\xi)}, \qquad x \in (0,1).
$$
We now bound suitably $\|K_{\nu}(x,\cdot)\|_{L^2(d\xi)}^2$.
According to the computation at the end of the proof of Lemma~\ref{lem:green},
$$
\int_0^x K_{\nu}^2(x,\xi)\, d\xi \simeq \big[ x(1-x)\big]^{3}.
$$
The complementary integral is estimated with the aid of \eqref{greenf} and \eqref{symp1}--\eqref{symp3}. We get
$$
\int_x^1 K_{\nu}^2(x,\xi)\, d\xi \simeq \frac{x^{2\nu+3}}{(1-x)^2} \int_x^1 \frac{(1-\xi)^4}{\xi^{2\nu+1}}\, d\xi
	\simeq 	\big[ x (1-x)\big]^3  
					\begin{cases}
						1, & \nu > 0, \\
						\log\frac{2}x, & \nu = 0, \\
						x^{2\nu}, & \nu < 0.
					\end{cases}
$$
Consequently, the first bound of the lemma follows.

To prove the second bound, we simply use the explicit form of $T_{\nu}$ together with \eqref{eq:derleb} and
the identity $\mathbb{D}_{\nu}\psi_1^{\nu}=0$, and compute that
$$
\mathbb{D}_{\nu}^{*}(T_{\nu}f)(x) = \psi_1^{\nu}(x) \Bigg[ \int_0^x \frac{F(\xi)}{\psi_1^{\nu}(\xi)} f(\xi)\, d\xi
	- \int_x^1 \frac{1-F(\xi)}{\psi_{1}^{\nu}(\xi)} f(\xi)\, d\xi \Bigg].
$$
Denote by $\Psi$ the absolute value of the above expression in square brackets. To estimate $\Psi$ we use
the first bound of the lemma and the estimates \eqref{symp1}--\eqref{symp3}.
Thus when $\nu > 0$ we get
$$
\Psi \lesssim 
\int_0^x \frac{\xi^{2\nu+2}\xi^{3/2}(1-\xi)^{3/2}}{\xi^{\nu+1/2}(1-\xi)}\, d\xi +
\int_x^1 \frac{(1-\xi)^3 \xi^{3/2} (1-\xi)^{3/2}}{\xi^{\nu+1/2}(1-\xi)}\, d\xi \simeq
	x^{\nu+4} + (1-x)^{9/2}	\begin{cases}
													 x^{2-\nu}, & \nu > 2, \\
													\log\frac{2}{x}, & \nu = 2, \\
													1, & \nu < 2,
													\end{cases}												
$$
so in this case
$$
\Psi \lesssim 					\begin{cases}
													 x^{2-\nu}, & \nu > 2, \\
													\log\frac{2}{x}, & \nu = 2, \\
													1, & \nu < 2.
												\end{cases}		
$$
If $\nu = 0$, then the additional factor $\log^{1/2}\frac{2}{x}$ appears under the above integrals and the bound we get is
$$
\Psi \lesssim x^4 \log^{1/2}\frac{2}{x} + (1-x)^{9/2} \lesssim 1.
$$
Finally, for $\nu < 0$ we obtain
$$
\Psi \lesssim \int_0^x \frac{\xi^{2\nu+2} \xi^{\nu+3/2}(1-\xi)^{3/2}}{\xi^{\nu+1/2}(1-\xi)}\, d\xi
	+ \int_x^1 \frac{(1-\xi)^3 \xi^{\nu+3/2}(1-\xi)^{3/2}}{\xi^{\nu+1/2}(1-\xi)}\, d\xi
\simeq x^{2\nu+4} + (1-x)^{9/2} \lesssim 1.
$$
These bounds for $\Psi$, together with \eqref{symp1}, imply the second estimate of the lemma.
\end{proof}

\begin{proof}[{Proof of Lemma \ref{lem:eig}}]
Assume that $f$ is an eigenfunction of $T_{\nu}$,
\begin{equation*}
T_{\nu}f = \Lambda f
\end{equation*}
for some $\Lambda \in \mathbb{R}$; here $\Lambda \neq 0$ since the null space of $T_{\nu}$ is trivial.
Note that $f$ is smooth on $(0,1)$, see \eqref{greenf}.

We begin with showing that $f$ is of the form $\mathbb{D}_{\nu} g$, where $g$ is an eigenfunction of the differential
operator $\mathbb{L}_{\nu}$. To verify this claim, notice first that $f$ is also an eigenfunction of
$\mathbb{D}_{\nu}\mathbb{D}_{\nu}^{*}$. Indeed, combining $f = \Lambda^{-1}T_{\nu}f$ with \eqref{eq:green} we have
\begin{equation} \label{eq:52}
\mathbb{D}_{\nu} \mathbb{D}_{\nu}^{*} f = \Lambda^{-1} f.
\end{equation}
Applying $\mathbb{D}_{\nu}^{*}$ to both sides of this identity we get
$$
\mathbb{D}_{\nu}^{*} \mathbb{D}_{\nu} \mathbb{D}_{\nu}^{*}f = \Lambda^{-1} \mathbb{D}_{\nu}^{*}f.
$$
Thus $\mathbb{D}_{\nu}^{*}f$ is an eigenfunction of $\mathbb{D}_{\nu}^{*}\mathbb{D}_{\nu} = \mathbb{L}_{\nu}-\lambda_{1,\nu}^2$,
hence also of $\mathbb{L}_{\nu}$. More precisely,
\begin{equation} \label{eq:53}
\mathbb{D}_{\nu}^{*}f = h,
\end{equation}
with $h$ satisfying $\mathbb{L}_{\nu}h = \widetilde{\Lambda} h$, where $\widetilde{\Lambda} = \lambda_{1,\nu}^2 + \Lambda^{-1}$.
Applying now $\mathbb{D}_{\nu}$ to both sides
of \eqref{eq:53} and then using \eqref{eq:52} we see that
$$
f = \Lambda \mathbb{D}_{\nu}h = \mathbb{D}_{\nu}(\Lambda h).
$$
So the desired conclusion follows with $g=\Lambda \mathbb{D}_{\nu}^{*}f$.
Note that $\mathbb{L}_{\nu}g = \widetilde{\Lambda} g$.

Next, we verify that $\widetilde{\Lambda} > 0$. Since $\mathbb{L}_{\nu} = \lambda_{1,\nu}^2 + \mathbb{D}_{\nu}^*\mathbb{D}_{\nu}$,
we have $\mathbb{D}_{\nu}^* \mathbb{D}_{\nu}g = (\widetilde{\Lambda}-\lambda_{1,\nu}^2)g$.
Further, observe that Lemma \ref{lem:eigA} implies $g,\mathbb{D}_{\nu}g \in L^2(dx)$.
Thus, assuming the identity
\begin{equation} \label{idQ}
\langle \mathbb{D}_{\nu}g, \mathbb{D}_{\nu}g \rangle = \langle \mathbb{D}_{\nu}^*\mathbb{D}_{\nu}g,g\rangle,
\end{equation}
we can write
$$
\big\|\mathbb{D}_{\nu}g\big\|_{L^2(dx)}^2 =
\langle \mathbb{D}_{\nu}g, \mathbb{D}_{\nu}g \rangle = \langle \mathbb{D}_{\nu}^*\mathbb{D}_{\nu}g,g\rangle
= \big(\widetilde{\Lambda}-\lambda_{1,\nu}^2\big) \langle g,g \rangle = \big(\widetilde{\Lambda}-\lambda_{1,\nu}^2\big)
	\|g\|_{L^2(dx)}^2.
$$
It follows that $\widetilde{\Lambda} \ge \lambda_{1,\nu}^2 > 0$.

To justify \eqref{idQ} one integrates by parts. The constant term in this integration will be
$\mathbb{D}_{\nu}g(x) g(x)\big|_0^1$, which is zero in view of the bounds from Lemma \ref{lem:eigA}
(recall that $\mathbb{D}_{\nu}g=f$ and $g = \Lambda \mathbb{D}_{\nu}^* f$).

We have shown that $g$ is an eigenfunction of the differential operator $\mathbb{L}_{\nu}$, the corresponding eigenvalue
being positive; denote it by $\lambda^2$. Each such $g$ is of the form
$$
g(x) = c_1 \sqrt{x} J_{\nu}(\lambda x) + c_2 \sqrt{x} Y_{\nu}(\lambda x)
$$
for some constants $c_1,c_2 \in \mathbb{R}$ and $\lambda > 0$; here $Y_{\nu}$ is the Bessel function of the second kind.
This follows by changing the variable in Bessel's equation (for this classic equation and its fundamental system of solutions see
\cite[Chapter III]{watson}, in particular \cite[{Section 3$\cdot$63}]{watson}; see also \cite[Section 5.4]{Leb} and
\cite[(5.4.11) and (5.4.12)]{Leb} taken with $\a=1/2$ and $\gamma=1$).
Then, in view of what was said above, any eigenfunction $f$ of $T_{\nu}$ is of the form
$$
f(x) = c_1 \mathbb{D}_{\nu} \big( \sqrt{x} J_{\nu}(\lambda x)\big) + c_2 \mathbb{D}_{\nu} \big( \sqrt{x} Y_{\nu}(\lambda x) \big).
$$
We now show that one must have $c_2 = 0$.

A direct computation based on \eqref{diff_J} reveals that
$$
\mathbb{D}_{\nu} \big( \sqrt{x} J_{\nu}(\lambda x)\big)
	= - \lambda \sqrt{x} J_{\nu+1}(\lambda x) + R^{\nu}(x) \sqrt{x} J_{\nu}(\lambda x).
$$
Since $Y_{\nu}$ satisfies the same differentiation rules as $J_{\nu}$ (cf.\ \cite{watson}), we also have
$$
\mathbb{D}_{\nu} \big( \sqrt{x} Y_{\nu}(\lambda x)\big)
	= - \lambda \sqrt{x} Y_{\nu+1}(\lambda x) + R^{\nu}(x) \sqrt{x} Y_{\nu}(\lambda x).
$$
Using now the asymptotic \eqref{J0} and the bound \eqref{Ras} near $x=0$, we see that
\begin{equation} \label{fc1}
\big| \mathbb{D}_{\nu} \big( \sqrt{x} J_{\nu}(\lambda x)\big) \big| \lesssim x^{\nu+3/2}, \qquad x \to 0^+.
\end{equation}
On the other hand, see \cite[Chapter III]{watson}, for each $\nu > -1$
$$
|Y_{\nu}(x)| \simeq x^{-\nu} \Big( 1 + \chi_{\{\nu = 0\}}\log\frac{2}{x}\Big), \qquad x \to 0^+.
$$
This implies
\begin{equation} \label{fc2}
\big| \mathbb{D}_{\nu} \big( \sqrt{x} Y_{\nu}(\lambda x)\big) \big|
	\simeq x^{-\nu-1/2} \Big( 1 + \chi_{\{\nu = 0\}}\log\frac{2}{x}\Big), \qquad x \to 0^+.
\end{equation}
Now, the eigenfunction estimate from Lemma \ref{lem:eigA} says that, as $ x \to 0^+$,
$$
|f(x)| \lesssim x^{3/2}
	\begin{cases}
		1, & \nu > 0, \\
		\log^{1/2}\frac{2}x, & \nu = 0, \\
		x^{\nu}, & \nu < 0.
	\end{cases}
$$
This, in view of \eqref{fc1} and \eqref{fc2}, forces $c_2 = 0$.

Thus we have proved that
$$
f(x) = c_1 \mathbb{D}_{\nu} \big( \sqrt{x} J_{\nu}(\lambda x)\big).
$$
It remains to show that $\lambda$ must be one of the zeros of $J_{\nu}$.
Observe that
$$
\mathbb{D}_{\nu}^* \Big(\mathbb{D}_{\nu} \big( \sqrt{x} J_{\nu}(\lambda x)\big)\Big)
	= \big( \mathbb{L}_{\nu} - \lambda_{1,\nu}^2\big)\big( \sqrt{x} J_{\nu}(\lambda x)\big)
	= \big( \lambda^2 - \lambda_{1,\nu}^2\big)\big( \sqrt{x} J_{\nu}(\lambda x)\big).
$$
This combined with Lemma \ref{lem:eigA} gives
$$
\mathbb{D}_{\nu}^* f(1) = c_1 \big( \lambda^2 - \lambda_{1,\nu}^2\big) J_{\nu}(\lambda) = 0.
$$
It follows that $\lambda = \lambda_{n,\nu}$ for some $n \ge 2$.
The case $n=1$ is excluded since for $\lambda = \lambda_{1,\nu}$ the function $f$ would be a multiple of
$\mathbb{D}_{\nu} \psi_1^{\nu}$, which is identically zero.

The proof of Lemma \ref{lem:eig} is complete.
\end{proof}

\begin{remark} \label{rem:eigp}
A part of the proof of Lemma \ref{lem:eig} can be replaced by another reasoning based on the known completeness
of the system $\{\psi_{n}^{\nu}\}$. From the point where the identity $g = \Lambda \mathbb{D}_{\nu}^* f$ is
obtained one can argue as follows.

We know that $\mathbb{L}_{\nu}g = \widetilde{\Lambda}g$ for some $\widetilde{\Lambda} \in \mathbb{R}$.
Here $g$ is a non-trivial function.
Moreover, by Lemma \ref{lem:eigA} we have $g, \mathbb{L}_{\nu}g \in L^2(dx)$. Integrating twice by parts,
with the aid of Lemma \ref{lem:eigA} and using the formula
$\mathbb{L}_{\nu} = \lambda_{1,\nu}^2 + \mathbb{D}_{\nu}^*\mathbb{D}_{\nu}$, one can show the identity
$$
\langle \mathbb{L}_{\nu}g, \psi_n^{\nu}\rangle = \langle g, \mathbb{L}_{\nu}\psi_n^{\nu}\rangle, \qquad n \ge 1.
$$
This implies $\widetilde{\Lambda}\langle g,\psi_n^{\nu} \rangle = \lambda_{n,\nu}^2 \langle g, \psi_n^{\nu} \rangle$,
$n \ge 1$. Consequently, there is a unique $n$ such that $\langle g, \psi_{n}^{\nu}\rangle \neq 0$.
By completeness of the system $\{\psi_n^{\nu}\}$ in $L^2(dx)$ we must have $g = c \psi_n^{\nu}$ for this $n$
and some constant $c \neq 0$, and $\widetilde{\Lambda} = \lambda_{n,\nu}^2$.
Finally, the case $n=1$ is ruled out as in the proof of Lemma \ref{lem:eig}.
\end{remark}

\section{Riesz transforms} \label{sec:riesz}

Riesz transforms for Fourier-Bessel expansions can be defined according to a general approach presented in \cite{NoSt1}.
However, these definitions depend in an essential way on the choice of derivatives decomposing the associated
Laplacians. In this paper, unlike in the existing literature, we assume decompositions in which the bottom eigenvalue
appears as a constant, see Section \ref{sec:der}, which gives rise to our new derivatives.

The resulting Riesz transforms admit a satisfactory $L^2$-theory \cite{NoSt1}.
The primary aim of this section is to show that, in case of the essential measure Fourier-Bessel setting, one has
a good $L^p$-theory as well. Our main tool is Wr\'obel's general result \cite{wrobel}; see also \cite{FSS}
for an earlier development in this direction.
On the other hand, the results of \cite{wrobel,FSS} in general do not apply in cases of the natural measure and Lebesgue
measure Fourier-Bessel settings (cf.\ \cite[p.\,762]{wrobel}). Therefore these contexts require some other,
most probably technically challenging methods which are beyond the scope of this paper.

\subsection{Essential measure Fourier-Bessel setting} \label{sec:FBriesz}

Let $\nu > -1$. In $L^2(d\eta_{\nu})$ the Riesz transforms (we treat simultaneously the probabilistic variant of the framework)
are formally given by
$$
\mathfrak{R}_{\nu} = d_{\nu} \big(\mathfrak{L}_{\nu}\big)^{-1/2}, \qquad
\mathfrak{R}_{\nu}^M = d_{\nu} \big(\mathfrak{L}^M_{\nu}\big)^{-1/2} \Pi_0,
$$
where $\Pi_0$ is the orthogonal projection onto the orthogonal complement of the bottom eigenfunction of $\mathfrak{L}_{\nu}^M$
(that has zero eigenvalue).
This can be written strictly in terms of expansions with respect to the differentiated system. We have
\begin{align*}
\mathfrak{R}_{\nu}f & = \sum_{n \ge 2} \lambda_{n,\nu}^{-1} \big\langle f, \varphi_n^{\nu}\big\rangle_{d\eta_{\nu}}
	d_{\nu}\varphi_n^{\nu}, \qquad f \in L^2(d\eta_{\nu}), \\
\mathfrak{R}^M_{\nu}f & = \sum_{n \ge 2} \big( \lambda_{n,\nu}^2-\lambda_{1,\nu}^2\big)^{-1/2}
	\big\langle f, \varphi_n^{\nu}\big\rangle_{d\eta_{\nu}} d_{\nu}\varphi_n^{\nu}, \qquad f \in L^2(d\eta_{\nu}).
\end{align*}
In view of Proposition \ref{prop:orth} and Remark \ref{rem:norm}, both $\mathfrak{R}_{\nu}$ and $\mathfrak{R}_{\nu}^M$
are contractions on $L^2(d\eta_{\nu})$; see \cite[Section~3]{NoSt1}.
 
The main result of this section reads as follows.
\begin{theorem} \label{thm:Riesz}
Let $\nu \ge -1/2$ and $1 < p < \infty$. Each of the operators $\mathfrak{R}_{\nu}$ and $\mathfrak{R}_{\nu}^M$ extends
uniquely to a bounded linear operator on $L^p(d\eta_{\nu})$. Moreover,
$$
\big\| \mathfrak{R}_{\nu} \big\|_{L^p(d\eta_{\nu})\to L^p(d\eta_{\nu})} \le 48 (p^*-1), \qquad
\big\| \mathfrak{R}^M_{\nu} \big\|_{L^p(d\eta_{\nu})\to L^p(d\eta_{\nu})} \le 48 (p^*-1),
$$
where $p^*=\max(p,p/(p-1))$.
\end{theorem}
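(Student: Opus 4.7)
The plan is to invoke the abstract machinery of Wróbel \cite{wrobel} for Riesz transforms associated with discrete orthogonal expansions. That theorem produces $L^p$ bounds for operators of the form $D L^{-1/2}$ (or $D L^{-1/2}\Pi_0$ when the bottom eigenvalue is zero) with an explicit constant of the type $C(p^*-1)$, provided that one has an orthonormal basis of $L^2$ whose elements and their ``differentiated'' (and normalized) counterparts are uniformly bounded in sup norm, together with a few structural compatibility conditions between $L$ and $D$. Because $\mathfrak{L}_{\nu}$ and $\mathfrak{L}_{\nu}^M$ differ only by a spectral shift, both Riesz transforms will be covered simultaneously by a single application, the projection $\Pi_0$ in the probabilistic variant being exactly the one required in Wróbel's framework since the bottom eigenfunction of $\mathfrak{L}_{\nu}^M$ is $\varphi_1^{\nu} \equiv 1$.

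Concretely, I would proceed in the following steps. First, I would recall precisely the hypotheses of the relevant theorem in \cite{wrobel} and rewrite $\mathfrak{R}_\nu$ and $\mathfrak{R}_\nu^M$ in the spectral form given in the text, with $L=\mathfrak{L}_\nu$ (resp.\ $\mathfrak{L}_\nu^M$) and derivative $D=d_\nu$. Second, I would verify the purely structural/algebraic hypotheses: $\{\varphi_n^\nu\}_{n\ge 1}$ is an orthonormal basis of $L^2(d\eta_\nu)$ by construction; the normalized differentiated system $\{(\lambda_{n,\nu}^2-\lambda_{1,\nu}^2)^{-1/2}\, d_\nu\varphi_n^\nu\}_{n\ge 2}$ is an orthonormal basis of $L^2(d\eta_\nu)$ thanks to Proposition \ref{prop:orth}(a), Theorem \ref{thm:L2dense}(a) and the norm computation in Remark \ref{rem:norm}; and $d_\nu^* d_\nu = \mathfrak{L}_\nu^M$ by Section \ref{sec:ess}. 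Third, I would feed into the theorem the sup-norm bounds coming from Lemma \ref{lem:ues} (announced in the introduction), which simultaneously control $\|\varphi_n^\nu\|_\infty$ and $\|(\lambda_{n,\nu}^2-\lambda_{1,\nu}^2)^{-1/2}\, d_\nu\varphi_n^\nu\|_\infty$ uniformly in $n$ and $\nu$. The output is the announced bound $48(p^*-1)$, the factor $48$ being the universal constant produced by Wróbel's argument (built on the Bañuelos--Wang estimate for martingale transforms) and the factor $p^*-1$ being the sharp $L^p$ behaviour transferred from that probabilistic inequality.

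The hardest part of the program, which the paper advertises and addresses separately in Lemma \ref{lem:ues}, is the verification of the uniform pointwise bound on the systems $\{\varphi_n^\nu\}$ and $\{(\lambda_{n,\nu}^2-\lambda_{1,\nu}^2)^{-1/2}\,d_\nu\varphi_n^\nu\}$: one must show that these sup norms are bounded by a constant independent of $n$ and of $\nu$ (under the restriction $\nu\ge -1/2$). The restriction $\nu\ge -1/2$ enters precisely here, since for $\nu<-1/2$ the behavior of $\phi_n^\nu(x)=\sqrt{2}|J_{\nu+1}(\lambda_{n,\nu})|^{-1}x^{-\nu}J_\nu(\lambda_{n,\nu}x)$ near $x=0$ and the growth of the normalizing factor $|J_{\nu+1}(\lambda_{n,\nu})|^{-1}$ would destroy uniformity; assuming this lemma, the uniform bounds on $d_\nu\varphi_n^\nu=(R^\nu-R_n^\nu)\varphi_n^\nu$ come by combining the sharp boundary asymptotics of $R^\nu-R_n^\nu$ from Lemma \ref{lem:diffR} (and Remark \ref{rem:asdif}) with the uniform control on $\varphi_n^\nu$ and on $\lambda_{n,\nu}$ via \eqref{eigvas}.

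Once Wróbel's theorem is applied, the conclusion for $\mathfrak{R}_\nu$ and $\mathfrak{R}_\nu^M$ is essentially identical: the spectral shift does not affect the algebraic structure of the associated Riesz transform, and the projection $\Pi_0$ in the definition of $\mathfrak{R}_\nu^M$ simply removes the zero eigenspace, matching exactly the framework of \cite{wrobel}. Finally, the density of finite linear combinations of $\{\varphi_n^\nu\}$ in $L^p(d\eta_\nu)$ for $1<p<\infty$ (which follows from the uniform boundedness of $\varphi_n^\nu$ and a standard approximation argument) ensures that the a priori $L^p$ estimate on this dense subspace extends uniquely to a bounded operator on all of $L^p(d\eta_\nu)$.
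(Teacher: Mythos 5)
Your overall strategy --- rewriting $\mathfrak{R}_{\nu}$ and $\mathfrak{R}_{\nu}^M$ spectrally and invoking Wr\'obel's theorem, with the density of $\lin\{\varphi_n^{\nu}\}$ and $\lin\{d_{\nu}\varphi_n^{\nu}\}$ in $L^p(d\eta_{\nu})$ supplied by completeness plus polynomial-in-$n$ sup bounds --- is the right one and matches the paper. However, you have misstated the hypotheses of \cite[Theorem 1]{wrobel} in a way that leaves a genuine gap. The theorem does not ask for uniform sup-norm bounds on the orthonormal system and its normalized differentiated counterpart; its conditions are: (A1) the commutator $[d_{\nu},d_{\nu}^*]$ is multiplication by a \emph{non-negative} function; (A2) a constant $K$ dominating a zero-order term; (T1) the integration-by-parts identity $\langle d_{\nu}\varphi_n^{\nu}, d_{\nu}\varphi_m^{\nu}\rangle_{d\eta_{\nu}} = \langle d_{\nu}^*d_{\nu}\varphi_n^{\nu},\varphi_m^{\nu}\rangle_{d\eta_{\nu}}$; (T2) certain boundary-vanishing limits at $0^+$ and $1^-$ involving the density of $\eta_{\nu}$; and (T3) the $L^p$ density. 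Your proposal verifies only a version of (T3) and never addresses (A1), (A2), (T1) or (T2). Condition (A1) is where the real work and the parameter restriction lie: one computes $[d_{\nu},d_{\nu}^*] = 2(R^{\nu})'(x) + (2\nu+1)/x^2$ and shows $(R^{\nu})'\ge 0$ via the Mittag--Leffler expansion \eqref{MLe}; the term $(2\nu+1)/x^2$ is non-negative precisely when $\nu\ge -1/2$, and this (together with (T2), which forces $\nu>-1/2$ and leaves $\nu=-1/2$ to a continuity argument) is the true source of the hypothesis $\nu\ge -1/2$, as Remark \ref{rem:Riesz} records.

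Relatedly, your claimed input from Lemma \ref{lem:ues} is not what that lemma says and is in fact false as you state it: the bounds are $|\varphi_n^{\nu}(x)|\lesssim n^{\nu+2}$ and $|d_{\nu}\varphi_n^{\nu}(x)|\lesssim n^{\nu+5}$, i.e.\ polynomial growth in $n$ with constants depending on $\nu$, not sup norms bounded independently of $n$ and $\nu$ (indeed $\|\varphi_n^{\nu}\|_{\infty}$ grows with $n$ near $x=1$). These polynomial bounds are exactly what is needed for the rapid decay of the Fourier--Bessel coefficients of a test function and hence for (T3), but attributing the restriction $\nu\ge-1/2$ to a failure of such (nonexistent) uniform bounds for $\nu<-1/2$ is incorrect --- Lemma \ref{lem:ues} holds for all $\nu>-1$. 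To complete the proof along your lines you must add the verification of (A1) (positivity of $(R^{\nu})'$), (A2) (trivial here, $K=0$), (T1), and the boundary limits (T2) using the asymptotics \eqref{asy1}, \eqref{ee2} and Proposition \ref{prop:asd}, and then handle $\nu=-1/2$ by continuity.
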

Note that the norm bounds in Theorem \ref{thm:Riesz} are not only quantitative in $p$, but also parameterless (do not depend on $\nu$).

\begin{proof}[{Proof of Theorem \ref{thm:Riesz}}]
We aim at applying \cite[Theorem 1]{wrobel}.
To make this legitimate we need to verify the following conditions (we adopt enumeration from \cite{wrobel}
and specify \cite[Conditions (A1), (A2), (T1)--(T3)]{wrobel} to our context).
\begin{itemize}
\item[(A1)]
The commutator $[d_{\nu},d_{\nu}^*] = d_{\nu} d_{\nu}^* - d_{\nu}^* d_{\nu}$ is a multiplication
	operator by a non-negative function.
\item[(A2)]
There is a non-negative constant $K$ such that $0 \le K r$, where $r=\lambda_{1,\nu}^2$ in case of
$\mathfrak{R}_{\nu}$ and $r=0$ in case of $\mathfrak{R}_{\nu}^M$.
\item[(T1)]
For $n,m \ge 1$,
$$
\big\langle d_{\nu}\varphi_n^{\nu}, d_{\nu} \varphi_m^{\nu}\big\rangle_{d\eta_{\nu}}
= \big\langle d_{\nu}^* d_{\nu}\varphi_n^{\nu}, \varphi_m^{\nu}\big\rangle_{d\eta_{\nu}}.
$$
\item[(T2)]
If $z \in \{0^+,1^-\}$, then
\begin{align}
\lim_{x \to z}\big( 1 + |\varphi_n^{\nu}(x)|^{s_1} + |d_{\nu}\varphi_n^{\nu}(x)|^{s_2}\big) w(x) \big( \varphi_{n}^{\nu} \big)'(x)
& = 0, \nonumber \\
\lim_{x \to z}\big( 1 + |\varphi_n^{\nu}(x)|^{s_1} + |d_{\nu}\varphi_n^{\nu}(x)|^{s_2}\big) w(x) \big(d_{\nu}\varphi_{n}^{\nu} \big)'(x)
& = 0, \label{lip}
\end{align}
for all $n \ge 1$ and $s_1,s_2 > 0$; here $w(x)$ is the density of the measure $\eta_{\nu}$.
\item[(T3)]
Each of the subspaces
$$
\lin\big\{\varphi_n^{\nu} : n \ge 1\big\} \qquad \textrm{and} \qquad \lin\big\{d_{\nu}\varphi_n^{\nu} : n \ge 1\big\}
$$
is dense in $L^p(d\eta_{\nu})$, $1 \le p < \infty$.
\end{itemize}

Observe that (A2) holds trivially with $K=0$, and validity of (T1) was already discussed, see \eqref{T1} (and recall that
$d_{\nu}\varphi_1^{\nu} = (\varphi_{1}^{\nu})'=0$).
Concerning (A1), by a direct computation we obtain
$$
[d_{\nu},d_{\nu}^*] = 2\big(R^{\nu}\big)'(x) + \frac{2\nu+1}{x^2}.
$$
It is enough we show that $(R^{\nu})'$ is non-negative. Using \eqref{R1} and \eqref{eq:S} we get
\begin{equation} \label{Rprim}
\big( R^{\nu}\big)'(x) = \frac{1}{(1+x)^2} + \frac{1}{(1-x)^2} + 2 \lambda_{1,\nu}^2 \sum_{k=2}^{\infty}
	\frac{\lambda_{k,\nu}^2 + \lambda_{1,\nu}^2 x^2}{(\lambda_{k,\nu}^2-\lambda_{1,\nu}^2 x^2)^2},
\end{equation}
and this expression is clearly positive for $x \in (0,1)$.
It remains to verify (T2) and (T3), which are less straightforward.

To show (T2) we first estimate all the component quantities. To this end, we assume that $\nu$ and $n$ are fixed.
We have, see \eqref{asy1} and Proposition \ref{prop:asd},
\begin{equation} \label{ee1}
\big| \varphi_n^{\nu}(x) \big| \lesssim 1, \qquad
\big| d_{\nu}\varphi_n^{\nu}(x)\big| = \big| \big( \varphi_n^{\nu}\big)'(x) \big| \lesssim x(1-x), \qquad x \in (0,1).
\end{equation}
Further, by \eqref{asy2},
\begin{equation} \label{ee2}
w(x) = x^{2\nu+1} \big[ \phi_1^{\nu}(x)\big]^2 \simeq x^{2\nu+1}(1-x)^2, \qquad x \in (0,1).
\end{equation}
Finally, to bound $|(d_{\nu}\varphi_n^{\nu})'|$ we observe that
$d_{\nu} = -d_{\nu}^* - (2\nu+1)/x + 2R^{\nu}(x)$, hence
\begin{align*}
\big( d_{\nu} \varphi_n^{\nu}\big)'(x) & =
	-d_{\nu}^* d_{\nu} \varphi_n^{\nu}(x) + \Big( 2R^{\nu}(x) - \frac{2\nu+1}x \Big) d_{\nu}\varphi_n^{\nu}(x) \\
& = - \big( \lambda_{n,\nu}^2 - \lambda_{1,\nu}^2 \big) \varphi_n^{\nu}(x)
		+ \Big( 2R^{\nu}(x) - \frac{2\nu+1}x \Big) d_{\nu}\varphi_n^{\nu}(x).
\end{align*}
In view of \eqref{ee1} and \eqref{Ras}, we conclude that
\begin{equation} \label{ee3}
\big| \big( d_{\nu}\varphi_n^{\nu}\big)'(x) \big| \lesssim 1, \qquad x \in (0,1).
\end{equation}

Now, using \eqref{ee1}, \eqref{ee2} and \eqref{ee3} we see that
\begin{align*}
\Big|\big( 1 + |\varphi_n^{\nu}(x)|^{s_1} + |d_{\nu}\varphi_n^{\nu}(x)|^{s_2}\big) w(x) \big( \varphi_{n}^{\nu} \big)'(x)\Big|
& \lesssim x^{2\nu+2}(1-x)^3, \qquad x \in (0,1), \\
\Big|\big( 1 + |\varphi_n^{\nu}(x)|^{s_1} + |d_{\nu}\varphi_n^{\nu}(x)|^{s_2}\big) w(x) \big(d_{\nu}\varphi_{n}^{\nu} \big)'(x)\Big|
& \lesssim x^{2\nu+1} (1-x)^2, \qquad x \in (0,1).
\end{align*}
It is clear that these expressions converge to $0$ as $x$ approaches the boundary of $(0,1)$, provided that
$\nu > -1/2$ in the second case (this restriction on the parameter is necessary, otherwise \eqref{lip} does not hold
for $z = 0^+$).

Passing to (T3), it essentially follows from completeness of the systems $\{\varphi_n^{\nu}\}$ and $\{d_{\nu} \varphi_n^{\nu}\}$
in $L^2(d\eta_{\nu})$ (see Theorem \ref{thm:L2dense}(a))
and suitable uniform estimates for functions of the systems that are stated in Lemma~\ref{lem:ues} below.
The arguments are well known, see for instance
\cite[Section 2]{CiSt1} or \cite[Section 2]{CiSt3}, so we will not go into much detail.

We first deal with the system $\{\varphi_n^{\nu}\}$.
Fix an arbitrary $f \in C_c^{\infty}(0,1)$. We will show that $f$ can be approximated in the $L^p(d\eta_{\nu})$ norm
by finite linear combinations of $\varphi_n^{\nu}$. This will imply density of $\lin\{\varphi_n^{\nu}\}$
in $L^p(d\eta_{\nu})$, since $C_c^{\infty}(0,1)$ is dense in $L^p(d\eta_{\nu})$.

Let
$$
S_{N}f = \sum_{n=1}^N \langle f, \varphi_n^{\nu} \rangle_{d\eta_{\nu}} \varphi_n^{\nu}, \qquad N \ge 1.
$$
Since $\{\varphi_n^{\nu} : n \ge 1\}$ is an orthonormal basis in $L^2(d\eta_{\nu})$, we know that $S_N f \to f$
in $L^2(d\eta_{\nu})$ as $N \to \infty$. Hence there is a sub-sequence $S_{N_k}f$ converging to $f$ pointwise almost everywhere on
$(0,1)$. By the dominated convergence theorem, $S_{N_k}f$ converges to $f$ also in $L^p(d\eta_{\nu})$, provided that
we can find a $d\eta_{\nu}$-integrable majorant for the expression $|S_{N_k}f(x)-f(x)|^p$ or, equivalently,
$|S_{N_k}f(x)|^p$.

Given any $M > 0$, we have the bound
\begin{equation} \label{fcb}
\big| \langle f, \varphi_n^{\nu} \rangle_{d\eta_{\nu}} \big| \lesssim n^{-M}, \qquad n \ge 1.
\end{equation}
This follows from the first estimate of Lemma \ref{lem:ues}
and the symmetry of $\mathfrak{L}_{\nu}$. Indeed, given $m \in \mathbb{N}_{+}$, we can write
$$
\big| \langle f, \varphi_n^{\nu} \rangle_{d\eta_{\nu}} \big|
= \frac{1}{(\lambda_{n,\nu}^2)^m} \Big| \big\langle f, \mathfrak{L}^m \varphi_n^{\nu} \big\rangle_{d\eta_{\nu}} \Big|
= \frac{1}{(\lambda_{n,\nu}^2)^m} \Big| \big\langle \mathfrak{L}^m f, \varphi_n^{\nu} \big\rangle_{d\eta_{\nu}} \Big|
\lesssim n^{-2m + \nu + 5/2},
$$
where in the last step we used \eqref{eigvas} and \eqref{phib}.
By choosing $m$ sufficiently large we infer \eqref{fcb}.

Take now $M > \nu + 7/2$. Then, by \eqref{phib} and \eqref{fcb}
$$
|S_{N_k}f(x)| \lesssim \sum_{n=1}^{N_k} n^{-M} n^{\nu+5/2} \lesssim 1,
$$
uniformly in $N_k$ and $x \in (0,1)$. Thus the majorant we are looking for is a sufficiently large constant function.
The conclusion follows.

Next, we focus on the differentiated system $\{d_{\nu}\varphi_{n}^{\nu}\}$. Here the reasoning is in general the same, we
only need to provide the relevant ingredients.
Recall from Section \ref{sec:diff} that $\{d_{\nu}\varphi_n^{\nu} : n \ge 2\}$
is an orthogonal basis in $L^2(d\eta_{\nu})$ with the normalization
$\|d_{\nu}\varphi_n^{\nu}\|_{L^2(d\eta_{\nu})} = (\lambda_{n,\nu}^2-\lambda_{1,\nu}^2)^{1/2} \simeq n$, $n \ge 2$.
Further, $d_{\nu}\varphi_n^{\nu}$ are eigenfunctions of the Laplacian
$\mathfrak{M}_{\nu} = \mathfrak{L}_{\nu} + [d_{\nu},d_{\nu}^*] = \lambda_{1,\nu}^2 + d_{\nu}d_{\nu}^*$,
which is symmetric in $L^2(d\eta_{\nu})$, and one has
$\mathfrak{M}_{\nu}(d_{\nu}\varphi_n^{\nu}) = \lambda_{n,\nu}^2(d_{\nu}\varphi_n^{\nu})$, $n \ge 2$; see \cite[Section 5]{NoSt1}.
Furthermore, a suitable uniform bound for $|d_{\nu}\varphi_n^{\nu}|$ is given in Lemma \ref{lem:ues} below.
With all these facts one concludes the density of $\lin\{d_{\nu}\varphi_n^{\nu}\}$ in $L^p(d\eta_{\nu})$
analogously as in the case of $\lin\{\varphi_n^{\nu}\}$.

Now Conditions (A1), (A2) and (T1)--(T3) are verified, so \cite[Theorem 1]{wrobel} can be applied to get the result
in case $\nu > -1/2$. Here $\nu = -1/2$ is excluded due to the restriction in Condition (T2).
Nevertheless, we can cover $\nu = -1/2$ by a simple continuity argument completely analogous to that used in the proofs
of \cite[Theorems 11 and 12]{wrobel}.
\end{proof}

\begin{remark} \label{rem:Riesz}
The restriction $\nu \in [-1/2,\infty)$ is indispensable in our approach. It is necessary for non-negativity of the
commutator (Condition (A1)) which is a crucial assumption for the machineries of \cite{wrobel,FSS} to work.
Nevertheless, we believe that both $\mathfrak{R}_{\nu}$ and $\mathfrak{R}_{\nu}^M$ are $L^p$-bounded for all $\nu > -1$,
though the norms might depend essentially on $\nu$ close to $-1$.
\end{remark}

\begin{remark} \label{rem:densC0}
In the proof of Theorem \ref{thm:Riesz} it was shown that $\lin\{\varphi_n^{\nu}:n \ge 1\}$ and
$\lin\{d_{\nu}\varphi_n^{\nu} : n \ge 2\}$ are dense in $L^p(d\eta_{\nu})$, $1 \le p < \infty$.
Similar arguments show that closures under $\|\cdot\|_{\infty}$ norm of
these subspaces contain $C_0(0,1)$, the space of continuous functions on $(0,1)$ vanishing at the boundary.
Indeed, it suffices to check that for $f\in C_c^{\infty}(0,1)$ the sequence $S_N f$ is uniformly fundamental,
and this is done with the aid of \eqref{fcb} (or its analogue for the differentiated system) and Lemma \ref{lem:ues}.
Since there is a sub-sequence $S_{N_k}f$ converging to $f$ pointwise a.e., the conclusion follows.
\end{remark}

We now state and prove the rough uniform bounds for the systems $\{\varphi_n^{\nu}\}$ and $\{d_{\nu}\varphi_n^{\nu}\}$
used in verification of (T3) in the proof of Theorem \ref{thm:Riesz}.
We leave aside the question of optimality of the exponents involved, which is not relevant for our needs.
\begin{lemma} \label{lem:ues}
Let $\nu > -1$ be fixed. Then
\begin{equation} \label{phib}
|\varphi_n^{\nu}(x)| \lesssim n^{\nu+5/2}, \qquad n \ge 1, \quad x \in (0,1),
\end{equation}
and
\begin{equation} \label{phibd}
|d_{\nu}\varphi_n^{\nu}(x)| \lesssim n^{\nu+11/2}, \qquad n \ge 2, \quad x \in (0,1).
\end{equation}
\end{lemma}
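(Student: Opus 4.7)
The approach is to exploit the explicit representation
\[
\varphi_n^\nu(x)=\frac{|J_{\nu+1}(\lambda_{1,\nu})|}{|J_{\nu+1}(\lambda_{n,\nu})|}\cdot\frac{J_\nu(\lambda_{n,\nu}x)}{J_\nu(\lambda_{1,\nu}x)},
\]
obtained by dividing the definitions of $\phi_n^\nu$ and $\phi_1^\nu$. The prefactor is of order $n^{1/2}$ by \eqref{Jinf} together with the standard matching lower asymptotic $|J_{\nu+1}(\lambda_{n,\nu})|\simeq\lambda_{n,\nu}^{-1/2}$ (a classical refinement of \eqref{Jinf}). The content of the lemma is to control the Bessel ratio uniformly on $(0,1)$, which I would do by splitting $(0,1)=(0,1/2]\cup[1/2,1)$.

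On $(0,1/2]$, the denominator is bounded below by a constant times $x^\nu$ (from \eqref{J0}, since $\lambda_{1,\nu}/2$ lies strictly below the first positive zero of $J_\nu$), while $|J_\nu(\lambda_{n,\nu}x)|$ is handled by the crude bounds $|J_\nu(z)|\lesssim z^\nu$ for $z\le 1$ (from \eqref{J0}) and $|J_\nu(z)|\lesssim z^{-1/2}$ for $z\ge 1$ (from \eqref{Jinf}); a short case split on whether $\lambda_{n,\nu}x\lessgtr 1$ produces a polynomial bound in $n$. On $[1/2,1)$, both numerator and denominator vanish linearly at $x=1$: \eqref{J} gives $|J_\nu(\lambda_{1,\nu}x)|\gtrsim 1-x$, and the mean value theorem applied to $x\mapsto J_\nu(\lambda_{n,\nu}x)$, using $J_\nu(\lambda_{n,\nu})=0$ and the pointwise estimate $|J_\nu'(y)|\lesssim y^{-1/2}$ derived from \eqref{diff_J} and \eqref{Jinf}, yields $|J_\nu(\lambda_{n,\nu}x)|\lesssim n^{1/2}(1-x)$. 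Assembling the two regions gives \eqref{phib} with considerable margin.

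For \eqref{phibd}, applying the quotient rule to the representation above and simplifying with \eqref{diff_J} (the $\nu/x$ singularities in $R^\nu$ and $R_n^\nu$ cancel exactly) yields
\[
(\varphi_n^\nu)'(x)=\frac{|J_{\nu+1}(\lambda_{1,\nu})|}{|J_{\nu+1}(\lambda_{n,\nu})|}\cdot\frac{W_n(x)}{[J_\nu(\lambda_{1,\nu}x)]^2},
\]
where
\[
W_n(x):=\lambda_{n,\nu}J_\nu'(\lambda_{n,\nu}x)J_\nu(\lambda_{1,\nu}x)-\lambda_{1,\nu}J_\nu(\lambda_{n,\nu}x)J_\nu'(\lambda_{1,\nu}x).
\]
The classical Lommel identity $(xW_n(x))'=(\lambda_{1,\nu}^2-\lambda_{n,\nu}^2)xJ_\nu(\lambda_{n,\nu}x)J_\nu(\lambda_{1,\nu}x)$, combined with the Fourier--Bessel orthogonality vanishing $\int_0^1 tJ_\nu(\lambda_{n,\nu}t)J_\nu(\lambda_{1,\nu}t)\,dt=0$ for $n\ge 2$, produces the integral representation
\[
W_n(x)=\frac{\lambda_{n,\nu}^2-\lambda_{1,\nu}^2}{x}\int_x^1 tJ_\nu(\lambda_{n,\nu}t)J_\nu(\lambda_{1,\nu}t)\,dt,
\]
which exhibits enough vanishing of $W_n$ at $x=1$ to cancel the double zero of the denominator. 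Inserting the same Bessel bounds used in the first step and again splitting $(0,1)$ at $1/2$ leads to a polynomial-in-$n$ bound for the quotient, giving \eqref{phibd}.

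The main obstacle in both parts is the boundary behavior at $x=1$, where $\phi_1^\nu$ vanishes. For $\varphi_n^\nu$ the matched first-order vanishing of $\phi_n^\nu$ suffices. For $d_\nu\varphi_n^\nu$, however, a naive triangle-inequality estimate of $W_n$ gives size $\sim n(1-x)$ against denominator $(1-x)^2$ and therefore blows up; the Lommel--integral representation is the tool that encodes the additional cancellation needed to keep the quotient bounded, and plays the essential role in proving \eqref{phibd}.
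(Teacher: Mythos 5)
Your proof of \eqref{phib} follows essentially the same route as the paper's: the same quotient representation $\varphi_n^{\nu}=\mathrm{const}_n\cdot J_{\nu}(\lambda_{n,\nu}x)/J_{\nu}(\lambda_{1,\nu}x)$, a split of $(0,1)$ at $1/2$, the crude small/large-argument bounds for $J_{\nu}$ on the left half, and the mean value theorem exploiting $J_{\nu}(\lambda_{n,\nu})=0$ near $x=1$. (One small difference works in your favor: you control the normalizing prefactor via the correct two-sided asymptotics $|J_{\nu+1}(\lambda_{n,\nu})|\simeq\lambda_{n,\nu}^{-1/2}$, whereas the paper asserts $\inf_n|J_{\nu+1}(\lambda_{n,\nu})|>0$; the extra factor $n^{1/2}$ you pick up is harmlessly absorbed by the slack in the exponent.) For \eqref{phibd}, however, your argument is genuinely different. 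The paper keeps the identity $d_{\nu}\varphi_n^{\nu}=(S^{\nu}-S_n^{\nu})\varphi_n^{\nu}$, splits the Mittag--Leffler sum defining $S_n^{\nu}$ into the tail $k>n$ (controlled by Calogero's formula) and the part $k<n$, and handles the near-resonances $x\approx\lambda_{k,\nu}/\lambda_{n,\nu}$ by applying the mean value theorem to $\varphi_n^{\nu}$ on the small intervals $I_{n,k}^{\nu}$ together with a direct estimate of $(\varphi_n^{\nu})'$ there. You instead differentiate the quotient directly and express the resulting Wronskian $W_n$ through the Lommel identity as $\frac{\lambda_{n,\nu}^2-\lambda_{1,\nu}^2}{x}\int_x^1 tJ_{\nu}(\lambda_{n,\nu}t)J_{\nu}(\lambda_{1,\nu}t)\,dt$ (equivalently $-\frac{\lambda_{n,\nu}^2-\lambda_{1,\nu}^2}{x}\int_0^x$, by orthogonality, which is the useful form near $x=0$); this encodes in closed form exactly the second-order vanishing at $x=1$ needed to cancel the double zero of $[J_{\nu}(\lambda_{1,\nu}x)]^2$, and your diagnosis that the naive triangle-inequality bound on $W_n$ loses a factor of $(1-x)$ is correct. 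Your route avoids the paper's case analysis over resonance intervals and both Calogero identities, at the price of invoking the Lommel/orthogonality machinery; both approaches land comfortably within the deliberately non-optimal exponents $n^{\nu+2}$ and $n^{\nu+5}$.
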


\begin{proof}
We begin with proving the simpler part \eqref{phib}.
We have
$$
\varphi_n^{\nu}(x) = \frac{J_{\nu+1}(\lambda_{1,\nu})}{J_{\nu+1}(\lambda_{n,\nu})}
	\frac{J_{\nu}(\lambda_{n,\nu}x)}{J_{\nu}(\lambda_{1,\nu}x)}, \qquad n \ge 1.
$$
Observe that here $J_{\nu+1}(\lambda_{1,\nu})$ is just a constant independent of $n$, and
\begin{equation} \label{spe1z}
\inf_{n \ge 1} \sqrt{n} |J_{\nu+1}(\lambda_{n,\nu})| > 0,
\end{equation}
by \eqref{zer_int}, \eqref{eigvas} and \eqref{Jinfp}. Further, by \eqref{J0} and \eqref{J},
$$
J_{\nu}(\lambda_{1,\nu} x) \simeq x^{\nu} (1-x), \qquad x \in (0,1).
$$
All this implies
\begin{equation} \label{ephip}
|\varphi_n^{\nu}(x)| \lesssim \sqrt{n} \frac{J_{\nu}(\lambda_{n,\nu} x)}{x^{\nu}(1-x)}, \qquad n \ge 1, \quad x \in (0,1).
\end{equation}

To proceed, we note that by \eqref{J0}, \eqref{Jinf} and \eqref{eigvas}
$$
|J_{\nu}(\lambda_{n,\nu}x)| \lesssim
	\begin{cases}
		(nx)^{\nu}, & 0 < x < 1/n, \\
		1, & 1/n \le x < 1,
	\end{cases}
$$
uniformly in $n \ge 1$ and $x \in (0,1)$. Combining this with \eqref{ephip} we get
$$
|\varphi_n^{\nu}(x)| \lesssim
	\begin{cases}
		n^{\nu+1/2}, & x \in (0,1/2], \\
		n^{3/2}, & x \in (1/2,1-1/(2n)],
	\end{cases}
$$
uniformly in $n \ge 1$ and $x \in (0,1-1/(2n)]$. To treat $x$ close to $1$ we first use 
$x^{\nu} \simeq 1$ and $J_{\nu}(\lambda_{n,\nu})=0$, and then the mean value theorem,
\eqref{diff_J}, \eqref{Jinf} and \eqref{eigvas}, obtaining
\begin{align*}
\frac{|J_{\nu}(\lambda_{n,\nu}x)|}{x^{\nu}(1-x)} & \simeq
	\frac{|J_{\nu}(\lambda_{n,\nu}x) - J_{\nu}(\lambda_{n,\nu})|}{1-x}
		\le \lambda_{n,\nu} \sup_{y \in (1-1/(2n),1)} \big| J'_{\nu}(\lambda_{n,\nu} y)\big| \\
& \le \sup_{y \in (1-1/(2n),1)} \bigg( \Big| \frac{\nu}{y} J_{\nu}(\lambda_{n,\nu} y) \Big|
		+ \lambda_{n,\nu} |J_{\nu+1}(\lambda_{n,\nu} y)| \bigg) \lesssim n,
\end{align*}
uniformly in $n \ge 1$ and $x \in (1-1/(2n),1)$. Together with \eqref{ephip}, this gives the bound for $|\varphi_n^{\nu}|$.
Summing up, for any fixed $\nu > -1$
$$
|\varphi_n^{\nu}(x)| \lesssim n^{\max(\nu+1/2,3/2)}, \qquad n \ge 1, \quad x \in (0,1),
$$
which obviously implies \eqref{phib}.

It remains to prove \eqref{phibd}. We have, see \eqref{RtoS},
$$
\big| d_{\nu}\varphi_n^{\nu}(x) \big| = \big| \big( R^{\nu}(x) - R_n^{\nu}(x) \big) \varphi_n^{\nu}(x) \big| =
	\big| \big( S^{\nu}(x) - S_n^{\nu}(x) \big) \varphi_n^{\nu}(x) \big| 
\le \big| S^{\nu}(x) \varphi_n^{\nu}(x) \big| + \big| S_n^{\nu}(x) \varphi_n^{\nu}(x)\big|.
$$
Since $S^{\nu}$ is bounded on $(0,1)$, taking into account \eqref{phib} our task reduces to estimating
$|S_n^{\nu}\varphi_n^{\nu}|$ for $n \ge 2$.

In view of \eqref{eigvas},
$$
\big| S_n^{\nu}(x) \big| \lesssim n^2 \bigg| \sum_{k \ge 1, k \neq n}
	\frac{1}{\lambda_{k,\nu}^2 - \lambda_{n,\nu}^2 x^2} \bigg|, \qquad n \ge 2, \quad x \in (0,1).
$$
Then
$$
\big| S_n^{\nu}(x) \varphi_n^{\nu}(x) \big| \lesssim n^2 \big| \varphi_n^{\nu}(x) \big|
	\big( \Upsilon_0 + \Upsilon_{\infty} \big),  \qquad n \ge 2, \quad x \in (0,1),
$$
where
$$
\Upsilon_0 := \bigg| \sum_{1 \le k < n} \frac{1}{\lambda_{k,\nu}^2 - \lambda_{n, \nu}^2 x^2} \bigg|, \qquad
\Upsilon_{\infty} := \sum_{k > n} \frac{1}{\lambda_{k,\nu}^2 - \lambda_{n,\nu}^2 x^2}.
$$

We treat first the easier term containing $\Upsilon_{\infty}$. Clearly,
$$
\Upsilon_{\infty} \le \sum_{k > n} \frac{1}{\lambda_{k,\nu}^2 - \lambda_{n,\nu}^2} \le
	\bigg| \sum_{k \ge 1, k \neq n} \frac{1}{\lambda_{k,\nu}^2 - \lambda_{n,\nu}^2} \bigg|
	+ \sum_{1 \le k < n} \frac{1}{\lambda_{n,\nu}^2 - \lambda_{k,\nu}^2}.
$$
Using now Calogero's formula \eqref{Calo} we get
$$
\Upsilon_{\infty} < \frac{\nu+1}{2\lambda_{n,\nu}^2} + \frac{n}{\lambda_{n,\nu}}
	\sup_{1\le k < n} \frac{1}{\lambda_{n,\nu}- \lambda_{k,\nu}}
\le \frac{\nu + 1}{2\lambda_{n,\nu}^2} + \frac{n}{\lambda_{n,\nu}} \frac{1}{\kappa},
$$
where
$$
\kappa := \inf_{k,l \ge 1, k \neq l} |\lambda_{k,\nu} - \lambda_{l,\nu}| > 0;
$$
strict positivity of $\kappa$ is justified by \eqref{eigvas}. Making again use of \eqref{eigvas} we see that
$\Upsilon_{\infty} \lesssim 1$ uniformly in $x \in (0,1)$ and $n \ge 2$. This together with \eqref{phib} implies
$$
n^2 \big| \varphi_n^{\nu}(x) \big| \Upsilon_{\infty} \lesssim n^{\nu+9/2},  \qquad n \ge 2, \quad x \in (0,1).
$$

We pass to the analysis of the $\Upsilon_0$ term. With the aid of \eqref{eigvas} we can estimate
$$
n^2 \big| \varphi_n^{\nu}(x)\big| \Upsilon_0 \lesssim n^2 \big| \varphi_n^{\nu}(x) \big| n
	\sup_{1 \le k < n} \frac{1}{|\lambda_{k,\nu} - \lambda_{n,\nu}x|}
		\lesssim n^2 \sup_{1 \le k < n} \Bigg| \frac{\varphi_n^{\nu}(x)}{x - \frac{\lambda_{k,\nu}}{\lambda_{n,\nu}}} \Bigg|.
$$
We will show that
\begin{equation} \label{qq}
\sup_{1 \le k < n} \Bigg| \frac{\varphi_n^{\nu}(x)}{x - \frac{\lambda_{k,\nu}}{\lambda_{n,\nu}}} \Bigg|
	\lesssim n^{\nu+7/2},  \qquad n \ge 2, \quad x \in (0,1).
\end{equation}
This will finish the whole reasoning.

Observe that by \eqref{eigvas} one can choose a constant $C > 1$ (depending on $\nu$, but not on $n$) such that
$$
\frac{\lambda_{1,\nu}}{\lambda_{n,\nu}} > \frac{2}{C n} \qquad \textrm{and} \qquad
	1 - \frac{\lambda_{n-1,\nu}}{\lambda_{n,\nu}} > \frac{2}{C n}, \qquad n \ge 2.
$$
Denote
$$
I_{n,k}^{\nu} = \bigg( \frac{\lambda_{k,\nu}}{\lambda_{n,\nu}} - \frac{1}{C n}, 
		\frac{\lambda_{k,\nu}}{\lambda_{n,\nu}} + \frac{1}{C n} \bigg).
$$
Notice that, for $n \ge 2$,
\begin{equation} \label{q3}
I_n^{\nu} := \bigcup_{1 \le k < n} I_{n,k}^{\nu} \subset \bigg( \frac{\lambda_{1,\nu}}{\lambda_{n,\nu}} - \frac{1}{C n}, 
		\frac{\lambda_{n-1,\nu}}{\lambda_{n,\nu}} + \frac{1}{C n} \bigg) \subset
		\bigg( \frac{1}{C n}, 1- \frac{1}{C n} \bigg) \subset (0,1).
\end{equation}
It is straightforward that
$$
\sup_{1 \le k < n} \Bigg| \frac{\varphi_n^{\nu}(x)}{x - \frac{\lambda_{k,\nu}}{\lambda_{n,\nu}}} \Bigg|
\lesssim n \big| \varphi_n^{\nu}(x)\big| \lesssim n^{\nu+7/2}, \qquad x \in \big( I_n^{\nu} \big)^c \cap (0,1), \quad n \ge 2,
$$
with the last bound being a consequence of \eqref{phib}. On the other hand, by the mean value theorem,
$$
\Bigg| \frac{\varphi_n^{\nu}(x)}{x - \frac{\lambda_{k,\nu}}{\lambda_{n,\nu}}} \Bigg| =
\Bigg| \frac{\varphi_n^{\nu}(x) - \varphi_{n}^{\nu}\big(
	\frac{\lambda_{k,\nu}}{\lambda_{n,\nu}}\big)}{x - \frac{\lambda_{k,\nu}}{\lambda_{n,\nu}}} \Bigg|
	\le \sup_{y \in I_{n,k}^{\nu}} \big| \big( \varphi_n^{\nu}\big)'(y) \big|,
		\qquad x \in I_{n,k}^{\nu}, \quad 1 \le k < n, \quad n \ge 2,
$$
where we used $ \varphi_{n}^{\nu}(
	{\lambda_{k,\nu}}\slash{\lambda_{n,\nu}})=0$ to write the equality.
So  \eqref{qq} will follow once we verify that
$$
\sup_{y \in I_{n,k}^{\nu}} \big| \big( \varphi_n^{\nu}\big)'(y) \big| \lesssim n^{\nu+7/2}, \qquad 1 \le k < n, \quad n \ge 2.
$$
It is enough, see \eqref{q3}, to check the stronger bound
\begin{equation} \label{q4}
\sup_{\frac{1}{C n} < y < 1 - \frac{1}{C n}} \big| \big( \varphi_n^{\nu}\big)'(y) \big| \lesssim n^{\nu + 7/2}, \qquad n \ge 2.
\end{equation}

A simple computation based on the explicit form of $\varphi_n^{\nu}$ and \eqref{diff_J} leads to the expression
$$
 \big( \varphi_n^{\nu}\big)'(y) = \Xi_1 - \Xi_2,
$$
with
$$
 \Xi_1 := \frac{J_{\nu+1}(\lambda_{1,\nu})}{J_{\nu+1}(\lambda_{n,\nu})}
	\frac{\lambda_{1,\nu} J_{\nu}(\lambda_{n,\nu} y) J_{\nu+1}(\lambda_{1,\nu} y)}{J_{\nu}^2(\lambda_{1,\nu} y)}, \qquad
\Xi_2 := \frac{J_{\nu+1}(\lambda_{1,\nu})}{J_{\nu+1}(\lambda_{n,\nu})}
	\frac{\lambda_{n,\nu} J_{\nu+1}(\lambda_{n,\nu} y)}{J_{\nu}(\lambda_{1,\nu} y)}.
$$
Observe that here $J_{\nu+1}(\lambda_{1,\nu})$ is just a constant, and $\sqrt{n}|J_{\nu+1}(\lambda_{n,\nu})|$ is separated from zero,
see \eqref{spe1z}. Further, since $\lambda_{n,\nu} y \gtrsim 1$ for $y > \frac{1}{C n}$ and $n \ge 2$,
see \eqref{eigvas}, it follows from \eqref{Jinf} that
$$
|J_{\nu}(\lambda_{n,\nu} y)| \lesssim 1 \qquad \textrm{and} \qquad
	|J_{\nu+1}(\lambda_{n,\nu} y)| \lesssim 1, \qquad y > \frac{1}{C n},
$$
uniformly in $n \ge 2$. Taking into account the above and using \eqref{J0}, \eqref{J}, \eqref{Ras} and \eqref{eigvas}
we conclude that
$$
|\Xi_1| \lesssim \frac{\sqrt{n} R^{\nu}(y)}{J_{\nu}(\lambda_{1,\nu} y)} \lesssim \sqrt{n} y^{1-\nu}(1-y)^{-2}, \qquad
|\Xi_2| \lesssim n^{3/2} y^{-\nu} (1-y)^{-2}, \qquad \frac{1}{C n} < y < 1 - \frac{1}{C n},
$$
uniformly in $n \ge 2$. From this \eqref{q4} can instantly be deduced.
This finishes proving \eqref{phibd}.
\end{proof}

\subsection{Multi-dimensional essential measure Fourier-Bessel setting}

Let $d \ge 1$. Taking a tensor product of $d$ copies of the one-dimensional essential measure Fourier-Bessel situation
leads in a canonical way to a $d$-dimensional essential measure Fourier-Bessel framework. See \cite{NoSt1} for the details.
Then $\nu \in (-1,\infty)^d$ is a multi-parameter, the space is $(0,1)^d$ equipped with the product measure
$\eta_{\nu} = \eta_{\nu_1} \otimes \ldots \otimes \eta_{\nu_d}$, and the orthonormal system in $L^2(d\eta_{\nu})$ is
$$
\varphi_n^{\nu} = \varphi_{n_1}^{\nu_1} \otimes \ldots \otimes \varphi_{n_d}^{\nu_d}, \qquad n \in (\mathbb{N}_+)^d.
$$
Other multi-dimensional objects, like the associated Laplacian, derivatives and Riesz transforms emerge via
a general scheme \cite{NoSt1}.

The $d$-dimensional counterparts of $\mathfrak{L}_{\nu}$ and $\mathfrak{L}_{\nu}^M$
(denoted here by the same symbols) have the decompositions
$$
\mathfrak{L}_{\nu} = \sum_{i=1}^d \Big( \lambda_{1,\nu_i}^2 + d_{\nu_i}^* d_{\nu_i}\Big), \qquad
\mathfrak{L}_{\nu}^{M} = \sum_{i=1}^d d_{\nu_i}^* d_{\nu_i},
$$
where $d_{\nu_i}$ and $d_{\nu_i}^*$ are the one-dimensional operators acting on $i$th axis.
The $d$-dimensional Riesz transforms are formally given by
$$
\mathfrak{R}_{\nu}^i = d_{\nu_i} (\mathfrak{L}_{\nu})^{-1/2}, \qquad
\big(\mathfrak{R}_{\nu}^M \big)^i = d_{\nu_i} \big( \mathfrak{L}_{\nu}^M \big)^{-1/2} \Pi_0, \qquad i = 1,\ldots,d,
$$
with $\Pi_0$ now denoting the orthogonal projection onto $\{\varphi_{(1,\ldots,1)}^{\nu}\}^{\perp}$.
These definitions are made strict in $L^2(d\eta_{\nu})$ in terms of orthogonal expansions, cf.\ \cite{NoSt1}, providing
bounded operators (in fact contractions) on $L^2(d\eta_{\nu})$.

The general result of \cite{wrobel} applies as well to the multi-dimensional essential measure Fourier-Bessel contexts.
It provides dimensionless and parameterless quantitative $L^p$ bounds for vectorial Riesz transforms under the
restriction $\nu \in [-1/2,\infty)^d$ (actually, under slightly bigger restriction $\nu \in (-1/2,\infty)^d$, but
then $\nu \in [-1/2,\infty)^d$ can be covered by a continuity argument).
Verification of all the necessary technical conditions is straightforward given what was already done in the proof
of Theorem \ref{thm:Riesz}, hence we leave the details to the reader. Here we only formulate the result.
\begin{theorem} \label{thm:Rieszd}
Let $d \ge 1$, $\nu \in [-1/2,\infty)^d$ and $1 < p < \infty$. The operators $\mathfrak{R}_{\nu}^i$ and
$(\mathfrak{R}_{\nu}^M)^i$, $i = 1,\ldots,d$, extend uniquely to bounded linear operators on $L^p(d\eta_{\nu})$.
Moreover,
\begin{align*}
\Big\| \big|\big(\mathfrak{R}_{\nu}^1,\ldots,\mathfrak{R}_{\nu}^d\big)\big|_{\ell^2} \Big\|_{L^p(d\eta_{\nu})\to L^p(d\eta_{\nu})}
& \le 48 (p^*-1), \\
\bigg\| \Big|\Big(\big(\mathfrak{R}_{\nu}^M\big)^1,\ldots,\big(\mathfrak{R}_{\nu}^M\big)^d\Big)\Big|_{\ell^2}
	\bigg\|_{L^p(d\eta_{\nu})\to L^p(d\eta_{\nu})}
& \le 48 (p^*-1),
\end{align*}
where $p^*=\max(p,p/(p-1))$.
\end{theorem}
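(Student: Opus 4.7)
The plan is to invoke Wr\'obel's general theorem \cite[Theorem 1]{wrobel} (or its vectorial refinement) in the $d$-dimensional essential measure setting, much as in the proof of Theorem~\ref{thm:Riesz}, and to harvest the fact that the constant in \cite{wrobel} depends only on $p$, not on the dimension or on the parameter. The reason this works painlessly is that the whole setting is a pure tensor product of $d$ copies of the one-dimensional framework already analyzed, so each of the hypotheses (A1), (A2) and (T1)--(T3) from \cite{wrobel} lifts coordinatewise from the one-dimensional verifications carried out in the proof of Theorem~\ref{thm:Riesz}.

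More concretely, I would first record the tensor-product structure: the multi-dimensional Laplacians decompose as $\mathfrak{L}_{\nu} = \sum_i(\lambda_{1,\nu_i}^2 + d_{\nu_i}^*d_{\nu_i})$ and $\mathfrak{L}_{\nu}^M = \sum_i d_{\nu_i}^*d_{\nu_i}$, and $\{\varphi_n^{\nu} : n \in (\mathbb{N}_+)^d\}$ is an orthonormal basis of $L^2(d\eta_{\nu})$ consisting of joint eigenfunctions. This immediately yields the strict $L^2$-definition of $\mathfrak{R}_{\nu}^i$ and $(\mathfrak{R}_{\nu}^M)^i$ via spectral multipliers, together with the vectorial $L^2$-contractivity from \cite[Section~3]{NoSt1}. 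Then I would verify the conditions of \cite[Theorem~1]{wrobel}: (A2) holds trivially with $K=0$; (T1) for tensor-product eigenfunctions reduces to the one-dimensional identity already established in \eqref{T1}; and for (A1) the commutator $[d_{\nu_i},d_{\nu_i}^*]$ is multiplication in the $i$-th variable by the non-negative function $2(R^{\nu_i})'(x_i) + (2\nu_i+1)/x_i^2$ (non-negativity coming from \eqref{Rprim} and from $\nu_i \ge -1/2$), and distinct $[d_{\nu_i},d_{\nu_i}^*]$ commute because they act on different variables, so the joint commutator condition holds as a sum of non-negative multiplication operators.

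The slightly more delicate items are (T2) and (T3). For (T2), the boundary product estimates factor: since the density of $\eta_{\nu}$ and the eigenfunctions $\varphi_n^{\nu}$ are tensor products, bounds like those derived from \eqref{ee1}--\eqref{ee3} apply in each variable, and the restriction $\nu_i \ge -1/2$ on each coordinate suffices to make the relevant boundary limits vanish, exactly as in the one-dimensional proof. For (T3), the density of $\lin\{\varphi_n^{\nu}:n\in(\mathbb N_+)^d\}$ in $L^p(d\eta_{\nu})$ follows by iterating the one-dimensional density (which was essentially the content of Remark~\ref{rem:densC0}, via uniform bounds and rapid decay of coefficients of test functions): if $f \in C_c^{\infty}((0,1)^d)$, a tensor-product partial-sum truncation converges to $f$ in $L^p(d\eta_{\nu})$ by dominated convergence, using the coordinatewise uniform bound from Lemma~\ref{lem:ues} together with coordinatewise rapid decay of Fourier-Bessel coefficients obtained by applying powers of $\mathfrak{L}_{\nu}$. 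The analogous density for $\lin\{d_{\nu_i}\varphi_n^{\nu}\}$ (as needed for the differentiated-system side) is handled identically, using the bounds \eqref{phibd} tensorized.

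The hard part, if any, is really just the bookkeeping of keeping track of the conditions coordinatewise; there is no essentially new difficulty over the one-dimensional case. Once (A1), (A2), (T1)--(T3) are in place, \cite[Theorem~1]{wrobel} directly delivers the vectorial $L^p$ inequalities with the dimension-free and parameter-free bound $48(p^*-1)$ stated in the theorem, valid under $\nu \in (-1/2,\infty)^d$; the endpoint case where some $\nu_i = -1/2$ is recovered by the same continuity argument as in the proof of \cite[Theorems~11 and 12]{wrobel}, letting each such coordinate decrease to $-1/2$ with the bound preserved.
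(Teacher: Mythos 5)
Your proposal is correct and follows exactly the route the paper takes: the paper's own proof of Theorem \ref{thm:Rieszd} simply observes that Wr\'obel's result applies to the multi-dimensional tensor-product setting, that verification of Conditions (A1), (A2), (T1)--(T3) is straightforward given the one-dimensional work in the proof of Theorem \ref{thm:Riesz}, and that the endpoint $\nu_i=-1/2$ is covered by a continuity argument. Your write-up merely fills in the coordinatewise bookkeeping that the paper leaves to the reader.
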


According to our best knowledge, Theorem \ref{thm:Rieszd} and the result of Wr\'obel \cite[Theorem 4.1]{Wr}
having somewhat different flavor are the only $L^p$ results for Riesz transforms in
any multi-dimensional discrete Fourier-Bessel setting.

\subsection{Modified essential measure Fourier-Bessel setting} \label{ssec:mod_e}

In connection with Riesz transforms and Wr\'obel's result \cite{wrobel},
it is perhaps interesting to look briefly at yet another Fourier-Bessel setting that is close to the essential measure
Fourier-Bessel context. Roughly speaking, the idea is to normalize the system (see Section \ref{FBneu}) not by the
bottom eigenfunction, but by the most elementary function having the same boundary behavior. Consequently, the associated
measure and Laplacian have elementary forms (no Bessel functions involved).

Let
$$
\check{U}f(x) = f(x)/ (1-x)
$$
and for $\nu > -1$ consider the system
$$
\check{\varphi}_{n}^{\nu} = \check{U} \phi_n^{\nu}, \qquad n \in \mathbb{N}_+,
$$
which is an orthonormal basis in $L^2(d\rho_{\nu})$, where
$$
d\rho_{\nu}(x) = x^{2\nu+1}(1-x)^2\, dx, \qquad x \in (0,1).
$$
The corresponding Laplacian is given by
$$
\check{\mathfrak{L}}_{\nu} = \check{U} \mathcal{L}_{\nu} \check{U}^{-1}
 = - \frac{d^2}{dx^2} - \bigg( \frac{2\nu+1}{x} - \frac{2}{1-x} \bigg) \frac{d}{dx} + \frac{2\nu+1}{x(1-x)},
$$
and $\check{\mathfrak{L}}_{\nu} \check{\varphi}_n^{\nu} = \lambda_{n,\nu}^2 \check{\varphi}_n^{\nu}$, $n \ge 1$.
We have the decomposition
$$
\check{\mathfrak{L}}_{\nu} = \lambda_{1,\nu}^2 + \check{d}_{\nu}^* \check{d}_{\nu},
$$
with the derivative and its formal adjoint in $L^2(d\rho_{\nu})$
\begin{align*}
\check{d}_{\nu} & = \check{U}\delta_{\nu} \check{U}^{-1} = \frac{d}{dx} + R^{\nu}(x) - \frac{1}{1-x}, \\
\check{d}_{\nu}^* & = \check{U}\delta_{\nu}^* \check{U}^{-1} = - \frac{d}{dx} - \frac{2\nu+1}{x} + R^{\nu}(x) + \frac{1}{1-x}.
\end{align*}
Note that $\check{d}_{\nu}\check{\varphi}_n^{\nu} = (R^{\nu} - R_n^{\nu})\check{\varphi}_n^{\nu}$, $n \ge 1$.

The differentiated system $\{\check{\varphi}_n^{\nu} : n \ge 2\}$ is an orthogonal basis in $L^2(d\rho_{\nu})$
(this is justified by the results of Section \ref{sec:diff}) and
$\|\check{\varphi}_n^{\nu}\|_{L^2(d\rho_{\nu})} = (\lambda_{n,\nu}^2-\lambda_{1,\nu}^2)^{1/2}$.
The Riesz transform in this setting is given by (cf.\ \cite{NoSt1})
$$
\check{\mathfrak{R}}_{\nu}f = \sum_{n=2}^{\infty} \lambda_{n,\nu}^{-1}
	\big\langle f, \check{\varphi}_n^{\nu}\big\rangle_{d\rho_{\nu}} \check{d}_{\nu}\check{\varphi}_n^{\nu}, \qquad
		f \in L^2(d\rho_{\nu}).
$$
This operator is a contraction on $L^2(d\rho_{\nu})$.

It is noteworthy that the result of \cite{wrobel} can be applied also in the present situation, under the restriction
$\nu > -1/2$. This leads to the following.
\begin{theorem} \label{thm:Rieszm}
Let $\nu > -1/2$ and $1 < p < \infty$. The operator $\check{\mathfrak{R}}_{\nu}$ extends uniquely to a bounded
linear operator on $L^p(d\rho_{\nu})$. Moreover,
$$
\big\| \check{\mathfrak{R}}_{\nu} \big\|_{L^p(d\rho_{\nu})\to L^p(d\rho_{\nu})} \le 48 \big(1+\sqrt{K}\big) (p^*-1),
$$
where $K = \frac{1}8 \max(\nu+1/2, 1/(\nu+1/2))$ and $p^* = \max(p,p/(p-1))$.
\end{theorem}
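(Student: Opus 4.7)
The plan is to apply \cite[Theorem 1]{wrobel}, following the same scheme as in the proof of Theorem~\ref{thm:Riesz}. Accordingly, I need to verify conditions (A1), (A2) and (T1)--(T3) from \cite{wrobel} in the modified essential measure Fourier-Bessel context. The verifications of (A1) and (T1)--(T3) are close variants of those carried out for Theorem~\ref{thm:Riesz}, while (A2) is the new ingredient responsible for the factor $1+\sqrt{K}$ in the final bound.

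A direct computation, in which the two occurrences of $1/(1-x)$ in $\check{d}_{\nu}$ and $\check{d}_{\nu}^{*}$ cancel each other, yields
$$
[\check{d}_{\nu},\check{d}_{\nu}^{*}] = 2 \big(R^{\nu}\big)'(x) + \frac{2\nu+1}{x^2},
$$
which is precisely the same expression as the commutator in the essential measure setting; by \eqref{Rprim} it is non-negative on $(0,1)$ whenever $\nu \ge -1/2$, giving (A1). The orthogonality and $L^2(d\rho_{\nu})$-completeness of $\{\check{\varphi}_n^{\nu}: n\ge 1\}$ and $\{\check{d}_{\nu}\check{\varphi}_n^{\nu}: n\ge 2\}$, as well as the symmetry identity in (T1), transfer from Proposition~\ref{prop:orth}(b) and Theorem~\ref{thm:L2dense}(b) via the unitary isomorphism $\check{U} \colon L^2(d\mu_{\nu}) \to L^2(d\rho_{\nu})$, which intertwines $\delta_{\nu}$ with $\check{d}_{\nu}$ and maps $\{\phi_n^{\nu}\}$ and $\{\delta_{\nu}\phi_n^{\nu}\}$ onto their modified analogues. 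Condition (T2) reduces to checking the boundary decay of $w(x) (\check{\varphi}_n^{\nu})'(x)$ and $w(x) (\check{d}_{\nu}\check{\varphi}_n^{\nu})'(x)$, where $w(x)=x^{2\nu+1}(1-x)^2$ is the density of $\rho_{\nu}$; combining $\check{\varphi}_n^{\nu} = \phi_n^{\nu}/(1-x)$ and $\check{d}_{\nu}\check{\varphi}_n^{\nu} = (R^{\nu}-R_n^{\nu})\check{\varphi}_n^{\nu}$ with the asymptotics from \eqref{asy2}, Proposition~\ref{prop:asd} and Lemma~\ref{lem:diffR} yields all the required vanishing, with the restriction $\nu > -1/2$ dictated by the behavior at $x = 0^+$ exactly as in Theorem~\ref{thm:Riesz}. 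The lift from $L^2$ to $L^p(d\rho_{\nu})$ density, $1 \le p < \infty$, in (T3) proceeds by the dominated convergence argument from the proof of Theorem~\ref{thm:Riesz}, based on polynomial-in-$n$ pointwise bounds for $\check{\varphi}_n^{\nu}$ and $\check{d}_{\nu}\check{\varphi}_n^{\nu}$ derived from Lemma~\ref{lem:ues} after absorbing the factor $1/(1-x)$ against the linear (respectively quadratic) vanishing of $\phi_n^{\nu}$ (respectively $\delta_{\nu}\phi_n^{\nu}$) at $x=1^-$.

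The main obstacle is the verification of (A2) with the explicit constant $K = \tfrac{1}{8}\max(\nu+1/2, 1/(\nu+1/2))$. Unlike in the essential measure setting, the Laplacian $\check{\mathfrak{L}}_{\nu}$ carries a non-trivial zero-order potential $\tfrac{2\nu+1}{x(1-x)}$, and the bottom eigenfunction $\check{\varphi}_1^{\nu} = \phi_1^{\nu}/(1-x)$ is no longer constant; these features prevent the trivial choice $K=0$ available in Theorem~\ref{thm:Riesz} and force a careful quadratic form computation. The value of $K$ claimed in the statement emerges as the optimal one from this computation, and its square root propagates through the machinery of \cite{wrobel} to produce the factor $1+\sqrt{K}$ in the final norm estimate.
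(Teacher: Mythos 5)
Your overall strategy is the same as the paper's: apply \cite[Theorem 1]{wrobel} and verify Conditions (A1), (A2), (T1)--(T3). Your treatment of (A1) is correct (the commutator computation with the cancellation of the $(1-x)^{-2}$ terms is exactly right), and your transfer of orthogonality, completeness, (T2) and (T3) from the natural/essential measure settings via the unitary map $\check{U}$ and Lemma \ref{lem:ues} is sound and matches what the paper implicitly relies on when it says the verification of the technical conditions is straightforward.

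The genuine gap is in (A2), which is precisely the one step the paper does write out, because it is where the explicit constant $K$ comes from. You correctly identify (A2) as the new obstacle, but you never state what the condition actually reduces to, and you never verify it: saying that $K$ ``emerges as the optimal one from this computation'' asserts the conclusion rather than proving it. Concretely, Condition (A2) specializes here (see \cite[p.\,751]{wrobel}) to the pointwise inequality
\begin{equation*}
\Big( R^{\nu}(x) - \frac{1}{1-x} \Big)^2 \le K\, \frac{2\nu+1}{x(1-x)}, \qquad x \in (0,1),
\end{equation*}
i.e.\ the square of the first-order coefficient of $\check{d}_{\nu}$ must be dominated by $K$ times the zero-order potential of $\check{\mathfrak{L}}_{\nu}$. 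Establishing this with the claimed value of $K$ requires two observations that are absent from your write-up: first, with $F(x) = R^{\nu}(x) - \frac{1}{1-x} = S^{\nu}(x) - \frac{1}{1+x}$ (by \eqref{R1}) one has $F(0)=-1$, $F(1)=\nu+1/2$, and $F$ is increasing on $(0,1)$ by \eqref{Rprim}, whence $|F| \le \max(\nu+1/2,1)$; second, $\frac{2\nu+1}{x(1-x)} \ge 8(\nu+1/2)$ since $x(1-x) \le 1/4$. Together these give $K = \max^2(\nu+1/2,1)/(8(\nu+1/2)) = \frac{1}{8}\max(\nu+1/2, 1/(\nu+1/2))$. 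Note also that the paper does not (and need not) claim this $K$ is optimal; your assertion of optimality is unjustified, and what matters for the stated norm bound is only that some admissible $K$ of this size exists. Without the displayed inequality and the monotonicity/endpoint analysis of $F$, the quantitative conclusion of the theorem is not established.
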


\begin{proof}
Apply \cite[Theorem 1]{wrobel}. Given the facts and analysis presented so far in this paper, verification
of all the technical conditions needed is straightforward. We omit the details, except one thing. Namely, we show
that in \cite[Theorem 1]{wrobel} one can choose the constant $K$ as indicated in the theorem.

A simple computation reveals, see \cite[p.\,751]{wrobel}, that \cite[Condition (A2)]{wrobel} specifies to
\begin{equation} \label{cndA2me}
\bigg( R^{\nu}(x) - \frac{1}{1-x} \bigg)^2 \le K  \frac{2\nu+1}{x(1-x)}, \qquad x \in (0,1)
\end{equation}
(the relevant quantities involved in formulation of the above mentioned condition in \cite{wrobel}
are $a_1=\lambda_{1,\nu}^2$, $p_1(x) = 1$, $q_1(x) = R^{\nu}(x)-\frac{1}{1-x}$, $w_1(x) = x^{2\nu+1}(1-x)^2$
and $r(x) = r_1(x) = (2\nu+1)\slash [x(1-x)]$).
Let $F(x) = R^{\nu}(x) - \frac{1}{1-x}$. By \eqref{R1}, $F(x) = S^{\nu}(x) - \frac{1}{1+x}$, so
$F(0) = -1$ and $F(1) = \nu + 1/2$ (see Section \ref{sec:Bes}). Further, $F$ is increasing on $(0,1)$, as can be
concluded from \eqref{Rprim}. Thus $|F(x)| \le \max(\nu+1/2,1)$, $x \in (0,1)$. On the other hand,
$8(\nu+1/2) \le \frac{2\nu+1}{x(1-x)}$, since the minimum on $(0,1)$ of the right-hand side here is attained for $x = 1/2$.
Therefore, condition \eqref{cndA2me} is satisfied with
$$
K = \frac{\max^2(\nu+1/2,1)}{8(\nu+1/2)} = \frac{1}{8} \max\Big(\nu+1/2, \frac{1}{\nu+1/2}\Big).
$$

Note that any $K$ satisfying \eqref{cndA2me} must blow up as $\nu \to (-1/2)^+$, by the fact that
$R^{-1/2}(x) = \frac{\pi}2 \tan\frac{\pi x}2$ (see \eqref{Rexp}) and continuity of both sides of the inequality \eqref{cndA2me}
with respect to the parameter.
Thus the case $\nu = -1/2$ cannot be covered by a combination of \cite[Theorem 1]{wrobel} and a continuity argument.
\end{proof}

\begin{remark}
The modified essential measure Fourier-Bessel setting admits a canonical multi-dimensional generalization, where
Wr\'obel's result \cite{wrobel} still applies providing dimensionless quantitative $L^p$ bounds
for the associated vectorial Riesz transforms.
\end{remark}

\subsection{Comments on natural and Lebesgue measures Fourier-Bessel settings} \label{ssec:comm_Riesz}

As it was already mentioned, Riesz transforms in the natural and Lebesgue measures Fourier-Bessel
settings are not covered in general by the results of \cite{wrobel,FSS}. However, there is a special case
where the situation is different. Namely, for $\nu = 1/2$ the Lebesgue measure Fourier-Bessel setting coincides with
the Jacobi one with the parameters $\alpha=\beta=1/2$. Riesz transform defined by means of our derivative $\mathbb{D}_{\nu}$
in the Lebesgue measure Fourier-Bessel context is the same as the Jacobi Riesz transform for the parameters $\alpha=\beta=1/2$.
So from Wr\'obel's result (see \cite[Theorem 12]{wrobel}) we infer that the Lebesgue measure Fourier-Bessel Riesz transform
for $\nu = 1/2$ is $L^p$-bounded, $1 < p < \infty$, and its $L^p$ norm is not greater than $48 (p^*-1)$.

Another observation is that for $\nu = -1/2$ the natural and Lebesgue measures Fourier-Bessel settings are the same
and they coincide with the Jacobi situation with the parameters $(\alpha,\beta) = (-1/2, 1/2)$. Again the Riesz transform
is the same operator in all the contexts mentioned (the relevant derivative being $\delta_{-1/2}$),
and its $L^p$-boundedness can be inferred from e.g.\ the result of Stempak \cite{St}.

Concerning $L^p$-boundedness of Riesz transforms defined by means of our new derivatives
in the natural and Lebesgue measures Fourier-Bessel contexts
for general $\nu$, we believe that an efficient approach could be based on the Calder\'on-Zygmund operator theory
and its variants. This means analysis in the spirit of \cite{CiRo4,CiSt2,CiSt3}, but more sophisticated in our
situations since the new derivatives and differentiated systems are more difficult to handle. The problem remains open.

\section{Differentiated semigroups} \label{sec:diff_semi}

In this section we study semigroups related to the differentiated Fourier-Bessel systems, the main focus being put on
the essential measure contexts. Our aims are
to estimate integral kernels of the semigroups and then to establish maximal theorems.
The latter will be needed in investigation of Fourier-Bessel Sobolev spaces in Section \ref{sec:Sob}.
The main idea we shall use is a comparison of the Jacobi and Fourier-Bessel Lebesgue measure
situations which is inspired by \cite{NoRo2}.
Thus we begin with gathering necessary facts in the Jacobi context and then concentrate for a while on the Lebesgue measure
Fourier-Bessel setting.

\subsection{Jacobi trigonometric Lebesgue measure setting scaled to $(0,1)$} \label{ssec:jac_diff}

In this framework the differentiated system $\{D_{\a,\b} \Phi_k^{\a,\b}\}$ coincides with the original one, but with shifted
indices; this is an instant consequence of \eqref{jdfe}. Thus it is clear that $\{D_{\a,\b}\Phi_k^{\a,\b} : k \ge 1\}$
is an orthogonal basis in $L^2(dx)$. Moreover,
$\|D_{\a,\b}\Phi_k^{\a,\b}\|_{L^2(dx)} = \pi \sqrt{k(k+\a+\b+1)}$, see \cite[Lemma 2]{NoSt1}.

The Laplacian associated with the differentiated system is, see \cite[Section 5]{NoSt1},
$M_{\a,\b} = \mathbb{J}_{\a,\b} + [D_{\a,\b},D_{\a,\b}^*]$.
A simple computation shows that
$$
[D_{\a,\b},D_{\a,\b}^*] = \frac{\pi^2 (\a+1/2)}{2 \sin^2 \frac{\pi x}2} + \frac{\pi^2 (\b+1/2)}{2 \cos^2\frac{\pi x}2},
$$
therefore
$$
M_{\a,\b} = - \frac{d^2}{dx^2} + \frac{\pi^2 (\alpha+3/2)(\alpha+1/2)}{4 \sin^2\frac{\pi x}2}
						+ \frac{\pi^2 (\beta+3/2)(\beta+1/2)}{4 \cos^2\frac{\pi x}2}
					= \mathbb{J}_{\a+1,\b+1}.
$$
Furthermore, $M_{\a,\b}(D_{\a,\b}\Phi_{k}^{\a,\b}) = \pi^2(k+\frac{\a+\b+1}2)^2 D_{\a,\b} \Phi_k^{\a,\b}$, $k \ge 1$.

In this context, there exists a natural self-adjoint extension of $M_{\a,\b}$ considered initially on the dense subspace of $L^2(dx)$
spanned by $\{D_{\a,\b}\Phi_k^{\a,\b} : k \ge 1\}$. This extension is defined spectrally in $L^2(dx)$ in a canonical way, and
we denote it by the same symbol $M_{\a,\b}$. As easily verified, the semigroup $\{\exp(-t M_{\a,\b})\}_{t \ge 0}$ has for $t > 0$ an
integral representation in $L^2(dx)$, with the integral kernel given by (see \eqref{jdfe})
\begin{align*}
H_t^{\a,\b}(x,y) & = \sum_{k=1}^{\infty} e^{-\pi^2(k+\frac{\a+\b+1}2)^2 t}
	\frac{D_{\a,\b}\Phi_k^{\a,\b}(x) D_{\a,\b}\Phi_k^{\a,\b}(y)}{\|D_{\a,\b}\Phi_k^{\a,\b}\|_{L^2(dx)}^2} \\
	& = \sum_{k=0}^{\infty} e^{-\pi^2(k+\frac{\a+1+\b+1+1}2)^2 t} \Phi_k^{\a+1,\b+1}(x) \Phi_k^{\a+1,\b+1}(y)
	= \mathbb{G}_t^{\a+1,\b+1}(x,y),
\end{align*}
where $\mathbb{G}_t^{\a,\b}(x,y)$ is the integral kernel of the Jacobi semigroup related to $\mathbb{J}_{\a,\b}$.

Sharp estimates of the latter kernel, thus also of the former one, are known.
We have the following, see \cite[Theorem 3.1]{NSS} and also \cite[Theorem 2.1]{NoRo2}.
\begin{theorem}[{\cite{NSS}}] \label{thm:jachkest}
Assume that $\a,\b \ge -1/2$. Given any fixed $T>0$,
$$
\mathbb{G}_t^{\a,\b}(x,y) \simeq \bigg[ 1 \wedge \frac{xy}t\bigg]^{\a+1/2}
		\bigg[ 1 \wedge \frac{(1-x)(1-y)}t\bigg]^{\b+1/2} \frac{1}{\sqrt{t}} \exp\bigg( -\frac{(x-y)^2}{4t} \bigg),
$$
uniformly in $x,y \in (0,1)$ and $0 < t \le T$.
\end{theorem}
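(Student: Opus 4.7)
The plan is to obtain the two-sided estimate via an integral representation of $\mathbb{G}_t^{\alpha,\beta}(x,y)$ and a subsequent Laplace-type asymptotic analysis. Substituting $\theta_x = \pi x/2$, $\theta_y = \pi y/2$ puts us in the classical Jacobi setting, where Koornwinder's product formula expresses $P_k^{\alpha,\beta}(\cos 2\theta_x)\, P_k^{\alpha,\beta}(\cos 2\theta_y)$ as an integral over an auxiliary compact set involving ultraspherical polynomials of a composite argument depending on $\theta_x,\theta_y$ and the integration variables. Summing in $k$ against the eigenvalue factor $e^{-\pi^2(k+(\alpha+\beta+1)/2)^2 t}$ and invoking the Mehler-type formula for the ultraspherical heat kernel (which has the shape of a periodized Gaussian) produces a representation of the form
\begin{equation*}
\mathbb{G}_t^{\alpha,\beta}(x,y) \simeq A_{\alpha,\beta}(x,y) \int\!\!\int \frac{1}{\sqrt{t}}\, e^{-\Psi(x,y;u,r)/t}\, w_{\alpha,\beta}(u,r)\, du\, dr,
\end{equation*}
where $A_{\alpha,\beta}(x,y) \simeq (\sin\theta_x \sin\theta_y)^{\alpha+1/2}(\cos\theta_x \cos\theta_y)^{\beta+1/2}$ is the natural boundary prefactor inherited from the $\Phi_k^{\alpha,\beta}$, the phase $\Psi \ge 0$ vanishes only on the diagonal and its quadratic behavior there reproduces the Gaussian $\exp(-(x-y)^2/(4t))$ via Laplace's method, and $w_{\alpha,\beta}$ is a smooth density on the interior.

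The second step is a case analysis in the four regimes determined by whether $xy/t$ and $(1-x)(1-y)/t$ are greater or less than $1$. When $x,y$ are separated from both endpoints, standard Laplace analysis at the minimum of $\Psi$ yields the expected $t^{-1/2}\exp(-(x-y)^2/(4t))$, and $A_{\alpha,\beta}$ is comparable to $1$, matching the fact that both boundary factors equal $1$ in this region. Near the left endpoint ($x,y$ both small), one has $A_{\alpha,\beta}(x,y) \simeq (xy)^{\alpha+1/2}$; the subregime $xy \gtrsim t$ is immediate, while for $xy \lesssim t$ a rescaling of the $u$-variable extracts an additional gain of $t^{-(\alpha+1/2)}$ from the integrand, reflecting the degeneracy of $\Psi$ near that boundary and producing the factor $(xy/t)^{\alpha+1/2}$. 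The same mechanism at the right endpoint contributes the $\beta$ factor, and the mixed regimes follow by combining the two analyses.

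The main obstacle is matching the upper and lower bounds across these four regimes uniformly for $\alpha,\beta \ge -1/2$. The upper bound reduces to straightforward majorization of $w_{\alpha,\beta}$ and the dominated Gaussian together with the rescaling argument above. The lower bound is more delicate and requires, in each regime, identifying an explicit subset of the integration domain whose measure is comparable to the predicted asymptotic contribution and on which the integrand is pinned from below by a constant multiple of that contribution; this forces one to keep careful track of how the mass of the representing measure distributes as a function of $x,y,t$. The restriction $\alpha,\beta \ge -1/2$ is inherent here: it both ensures nonnegativity of the measure underlying Koornwinder's formula and keeps $A_{\alpha,\beta}$ bounded below by the product of boundary factors, without which the Laplace argument could not deliver the stated lower bound.
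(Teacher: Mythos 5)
First, a point of order: the paper does not prove Theorem \ref{thm:jachkest} at all --- it is imported verbatim from \cite{NSS} (with a pointer to \cite{NoRo2}), so there is no internal proof to compare yours against. Your outline does follow, in broad strokes, the strategy underlying that line of work: the (Dijksma--)Koornwinder product formula turns $P_k^{\alpha,\beta}(\cos 2\theta_x)P_k^{\alpha,\beta}(\cos 2\theta_y)$ into an integral of an ultraspherical polynomial of a composite argument against a positive measure (positivity is exactly where $\alpha,\beta\ge -1/2$ enters), summation in $k$ against the eigenvalue factor produces an integral representation of $\mathbb{G}_t^{\alpha,\beta}$, and the boundary factors come from how the mass of the representing measure distributes near the endpoints. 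So the skeleton is the right one.

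The genuine gap is the constant $4$ in the exponential. A Laplace-type analysis at the level of precision you describe delivers only the \emph{qualitatively} sharp estimate: an upper and a lower bound with two different constants in the Gaussian. That weaker statement was already known (see \cite{NoSj} and \cite[Theorem 7.2]{CKP}, and the remark following the theorem in the paper, which records it for all $\alpha,\beta>-1$). What Theorem \ref{thm:jachkest} actually asserts --- and what makes \cite{NSS} a separate and substantially harder piece of work --- is that the \emph{same} constant $4$ works on both sides. To get that you would need, first, a genuinely sharp two-sided bound with matching Gaussian constant for the auxiliary one-dimensional ultraspherical kernel you invoke as a ``Mehler-type formula''; this is itself a nontrivial theorem for a continuous range of the order, not a formula one quotes. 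Second, for the lower bound you must show in every regime that the Koornwinder measure puts mass comparable to the predicted size on the set where the phase is within $O(t)$ of its exact minimum $(x-y)^2/4$; your sentence about ``keeping careful track of how the mass distributes'' names this difficulty but does not resolve it, and it is precisely the delicate part. As written, your argument establishes $\mathbb{G}_t^{\alpha,\beta}(x,y)\lesssim \dots \exp(-c_1(x-y)^2/t)$ and $\gtrsim \dots \exp(-c_2(x-y)^2/t)$ with $c_1\neq c_2$, which is not the stated theorem.
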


\begin{remark}
Theorem \ref{thm:jachkest} remains true if either $\a > -1$ and $\b=-1/2$ or $\a=-1/2$ and $\b > 1$; see \cite[Remark 3.2]{NSS}
and also \cite{MSZ,NoRo2}. Moreover, the result is known to be true for any $\a,\b>-1$ provided that the constant $4$ in the
exponential factor is replaced by some other, sufficiently small or sufficiently big positive constants in the
lower and upper estimates, respectively; see \cite[Theorem 7.2]{CKP}; see also \cite{NoSj}.
\end{remark}

\begin{remark}
The large time estimates for $\mathbb{G}_t^{\a,\b}(x,y)$ are known as well. For any $\a,\b > -1$ and $T > 0$ fixed one has
$$
\mathbb{G}_t^{\a,\b}(x,y) \simeq e^{-(\frac{\a+\b+1}2)^2 t} (xy)^{\a+1/2}
	\big[(1-x) (1-y)\big]^{\b+1/2}, \qquad x,y \in (0,1), \quad t \ge T.
$$
See e.g.\ \cite[p.\,233 and (3)]{NoSj}.
\end{remark}

\subsection{Fourier-Bessel Lebesgue measure setting} \label{ssec:FBL_diff}

Recall that $\mathbb{D}_{\nu}\psi_n^{\nu} = (R^{\nu}-R_n^{\nu})\psi_n^{\nu}$ and $\{\mathbb{D}_{\nu}\psi_n^{\nu} : n \ge 2\}$
is an orthogonal basis in $L^2(dx)$. Further,
$\|\mathbb{D}_{\nu}\psi_n^{\nu}\|_{L^2(dx)} = (\lambda_{n,\nu}^2-\lambda_{1,\nu}^2)^{1/2}$.

The Laplacian associated with the differentiated system is
$\mathbb{M}_{\nu} = \mathbb{L}_{\nu} + [\mathbb{D}_{\nu},\mathbb{D}_{\nu}^*]$. Since
$$
[\mathbb{D}_{\nu},\mathbb{D}_{\nu}^*] = \frac{2\nu+1}{x^2} + 2\big(R^{\nu}\big)'(x),
$$
it follows that
\begin{equation} \label{Mdif}
\mathbb{M}_{\nu} = -\frac{d^2}{dx^2} + \frac{(\nu+3/2)(\nu+1/2)}{x^2} + 2 \big(R^{\nu}\big)'(x).
\end{equation}
Note that $\mathbb{M}_{\nu}(\mathbb{D}_{\nu}\psi_n^{\nu}) = \lambda_{n,\nu}^2 \mathbb{D}_{\nu}\psi_n^{\nu}$, $n \ge 2$.
We denote by the same symbol $\mathbb{M}_{\nu}$ the natural in this context self-adjoint extension in $L^2(dx)$ of $\mathbb{M}_{\nu}$
acting initially on $\spann\{\mathbb{D}_{\nu}\psi_n^{\nu} : n \ge 2\}$.

The semigroup $\{\exp(-t \mathbb{M}_{\nu})\}_{t \ge 0}$ has for $t > 0$ an integral representation in $L^2(dx)$ with the
integral kernel
$$
\mathbb{H}_t^{\nu}(x,y) = \sum_{n=2}^{\infty} e^{-\lambda_{n,\nu}^2 t}
	\frac{\mathbb{D}_{\nu}\psi_n^{\nu}(x) \mathbb{D}_{\nu}\psi_n^{\nu}(y)}{\|\mathbb{D}_{\nu}\psi_n^{\nu}\|_{L^2(dx)}^2}
= \sum_{n=2}^{\infty} e^{-\lambda_{n,\nu}^2 t}
	\frac{(R^{\nu}-R_n^{\nu})(x)\, (R^{\nu}-R_n^{\nu})(y)}{\lambda_{n,\nu}^2-\lambda_{1,\nu}^2} \psi_n^{\nu}(x) \psi_n^{\nu}(y).
$$
With the aid of Lemma \ref{lem:ues} it is straightforward to check that $\mathbb{H}_t^{\nu}(x,y)$ is
a continuous function of $(x,y,t) \in (0,1)^2 \times (0,\infty)$. A more detailed analysis shows that it is actually
smooth, see Remark \ref{rem:smooth} below.
Unlike in the Jacobi case, this kernel cannot be expressed in a reasonable way in terms of the kernel of
the non-differentiated semigroup $\{\exp(-t\mathbb{L}_{\nu})\}$.

To estimate $\mathbb{H}_t^{\nu}(x,y)$ we employ the method from \cite{NoRo2}. Namely, we will show that the generators
$\mathbb{M}_{\nu}$ and $M_{\nu,1/2}$ differ only by a bounded zero order term and, moreover, their
domains are the same. This together with the Trotter product formula will imply that the corresponding heat kernels
are comparable in finite time intervals.

We shall need some technical preparation. Observe that
$$
\mathbb{M}_{\nu} - M_{\nu,1/2} = \Big( \nu + \frac{3}2\Big) \Big( \nu + \frac{1}2\Big)
	\bigg[ \frac{1}{x^2} - \frac{\pi^2}{4 \sin^2\frac{\pi x}2} \bigg] +
	2\bigg[\big(R^{\nu}\big)'(x) - \frac{\pi^2}{4\cos^2\frac{\pi x}2}\bigg]
	=: F^{\nu}(x).
$$
\begin{lemma} \label{lem:Fbd}
For any $\nu > -1$ fixed the function $F^{\nu}$ is bounded on $[0,1]$, the values $F^{\nu}(\pm 1)$ being understood in
a limiting sense.
\end{lemma}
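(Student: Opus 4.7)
The plan is to show that $F^{\nu}$ is continuous on $(0,1)$ and then verify finite limits at the two endpoints; by compactness this yields boundedness. Continuity on $(0,1)$ is immediate, since all four terms in the definition of $F^{\nu}$ are smooth there (the Mittag--Leffler expansion \eqref{MLe} guarantees smoothness of $R^{\nu}$ on $(0,1)$, hence of $(R^{\nu})'$).

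For the endpoint $x \to 0^+$, I would expand each singular building block to leading order. The elementary expansion $\sin(\pi x/2) = (\pi x/2)(1 + \mathcal{O}(x^2))$ gives
$$
\frac{1}{x^2} - \frac{\pi^2}{4\sin^2\frac{\pi x}2} = -\frac{\pi^2}{12} + \mathcal{O}(x^2), \qquad x \to 0^+,
$$
so the first bracket in $F^{\nu}$ is bounded near $0$. For the second bracket, formula \eqref{Rprim} shows that $(R^{\nu})'$ is continuous at $x=0$ (the series converges since $\sum 1/\lambda_{k,\nu}^2 < \infty$), and $\pi^2/[4\cos^2(\pi x/2)]$ is manifestly smooth at $0$. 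Hence $F^{\nu}$ has a finite limit at $0^+$.

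For the endpoint $x \to 1^-$, the first bracket is smooth at $1$ since $\sin(\pi/2)=1$. The interesting point is cancellation of the second-order pole at $x=1$ in the second bracket. From the decomposition \eqref{R1}, namely $R^{\nu}(x) = 1/(1-x) - 1/(1+x) + S^{\nu}(x)$, differentiation yields
$$
\big(R^{\nu}\big)'(x) = \frac{1}{(1-x)^2} + \frac{1}{(1+x)^2} + \big(S^{\nu}\big)'(x),
$$
where $S^{\nu}$ is smooth up to $x=1$ (the defining series in \eqref{eq:S} converges uniformly there since $\lambda_{k,\nu}/\lambda_{1,\nu} > 1$ for $k \ge 2$). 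On the other hand, setting $u = 1-x$ and using $\cos(\pi x/2) = \sin(\pi u/2) = (\pi u/2)(1+\mathcal{O}(u^2))$, one finds
$$
\frac{\pi^2}{4\cos^2\frac{\pi x}2} = \frac{1}{(1-x)^2} + \mathcal{O}(1), \qquad x \to 1^-.
$$
The dominant $(1-x)^{-2}$ singularities cancel exactly, leaving a bounded expression, so $F^{\nu}$ also has a finite limit at $1^-$.

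Combining the three observations, $F^{\nu}$ extends to a continuous function on the compact interval $[0,1]$, and is therefore bounded there. The main (and only nontrivial) obstacle is the identification of the $(1-x)^{-2}$ pole in $(R^{\nu})'$ and its exact cancellation with the Jacobi potential term at $x=1$; everything else amounts to routine Taylor expansions. No other cancellations are required, as the remaining singular contributions sit only at the endpoint already handled.
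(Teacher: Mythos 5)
Your proof is correct, and it rests on the same key identification as the paper's: via \eqref{R1}/\eqref{Rprim} the derivative $(R^{\nu})'$ carries exactly one double pole $1/(1-x)^2$ at $x=1$, which cancels against $\pi^2/[4\cos^2\frac{\pi x}{2}]$, while the behavior at $x=0$ is routine. The difference is in packaging: you argue qualitatively (continuity on $(0,1)$ plus finite one-sided limits at the endpoints, then compactness), and for the series contribution you only need that $S^{\nu}$ is $C^1$ up to $x=1$; the paper instead produces explicit two-sided numerical bounds for each piece on all of $[0,1]$ --- it quotes the inequality $-\pi^2/4+1\le 1/x^2-\pi^2/[4\sin^2\frac{\pi x}{2}]\le 0$, observes that the series term in \eqref{Rprim} is increasing in $x$, and evaluates it at $x=1$ using Calogero's formulae \eqref{Calo} and \eqref{Calo2}. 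Your route is more elementary (no Calogero identities needed) but yields no explicit constant, whereas the paper's gives a concrete bound on $\sup|F^{\nu}|$, which is the quantity $c_{\nu}$ that later enters the Trotter comparison of heat kernels. One small point to tighten: justifying that $(S^{\nu})'$ extends continuously to $x=1$ requires uniform convergence of the termwise-differentiated series in \eqref{eq:S} near $x=1$, and for that the bare inequality $\lambda_{k,\nu}/\lambda_{1,\nu}>1$ is not enough --- you should invoke the growth $\lambda_{k,\nu}\simeq k$ from \eqref{eigvas} so that the general term is $\mathcal{O}(k^{-2})$ uniformly on $[0,1]$.
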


\begin{proof}
In what follows some expressions are understood in an obvious limiting sense.
It is not hard to check, see \cite[p.\,441]{NoRo2}, that
\begin{equation} \label{pin2}
- \frac{\pi^2}4 + 1 \le  \bigg[ \frac{1}{x^2} - \frac{\pi^2}{4\sin^2\frac{\pi x}2}\bigg]  \le 0, \qquad x \in [0,1].
\end{equation}
Further, using \eqref{Rprim} we can write
$$
\bigg[\big(R^{\nu}\big)'(x) - \frac{\pi^2}{4\cos^2\frac{\pi x}2}\bigg] =
	\bigg(\frac{1}{(1-x)^2} - \frac{\pi^2}{4\cos^2\frac{\pi x}2} \bigg)
	+ \frac{1}{(1+x)^2} + 2\lambda_{1,\nu}^2 \sum_{k=2}^{\infty}
		\frac{\lambda_{k,\nu}^2 + \lambda_{1,\nu}^2 x^2}{(\lambda_{k,\nu}^2 - \lambda_{1,\nu}^2 x^2)^2}.
$$
Here we can easily bound most of the terms,
$$
- \frac{\pi^2}4 + 1 \le  \bigg( \frac{1}{(1-x)^2} - \frac{\pi^2}{4\cos^2\frac{\pi x}2}\bigg)  \le 0, \qquad
\frac{1}4 \le \frac{1}{(1+x)^2} \le 1, \qquad x \in [0,1];
$$
the first part follows from \eqref{pin2} by reflecting $x \mapsto 1-x$, and the second one is trivial.

As for the remaining term involving the series, we observe that it is an increasing function of $x$, hence it is controlled
by its value at $x=1$. Combining this with Calogero's formulae \eqref{Calo} and \eqref{Calo2} we get
\begin{align*}
0 < 2\lambda_{1,\nu}^2 \sum_{k=2}^{\infty}
		\frac{\lambda_{k,\nu}^2 + \lambda_{1,\nu}^2 x^2}{(\lambda_{k,\nu}^2 - \lambda_{1,\nu}^2 x^2)^2}
& \le 2\lambda_{1,\nu}^2 \sum_{k=2}^{\infty}
		\frac{\lambda_{k,\nu}^2 + \lambda_{1,\nu}^2}{(\lambda_{k,\nu}^2 - \lambda_{1,\nu}^2)^2} 
 = \sum_{k=2}^{\infty} \frac{2\lambda_{1,\nu}^2}{\lambda_{k,\nu}^2 - \lambda_{1,\nu}^2} 
	+ \sum_{k=2}^{\infty} \frac{4\lambda_{1,\nu}^4}{(\lambda_{k,\nu}^2-\lambda_{1,\nu}^2)^2} \\
&	= \frac{\lambda_{1,\nu}^2 - (\nu+1)(\nu+2)}{3}, \qquad x \in [0,1].
\end{align*}
The conclusion follows.
\end{proof}

Another crucial technical result we need is the following.
\begin{lemma} \label{lem:mixsym}
Let $\nu > -1$. Then
$$
\Big\langle \mathbb{M}_{\nu}\big(\mathbb{D}_{\nu}\psi_n^{\nu}\big), D_{\nu,1/2}\Phi_k^{\nu,1/2}\Big\rangle
= \Big\langle \mathbb{D}_{\nu}\psi_n^{\nu}, \mathbb{M}_{\nu}\big(D_{\nu,1/2}\Phi_k^{\nu,1/2}\big)\Big\rangle,
\qquad n \ge 2, \quad k \ge 1,
$$
where $\mathbb{M}_{\nu}$ on the right-hand side is the differential operator \eqref{Mdif}.
\end{lemma}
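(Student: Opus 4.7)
The plan is to integrate by parts. Since $\mathbb{M}_{\nu}$ is of Schr\"odinger form $-d^2/dx^2+V^{\nu}(x)$ with the real potential $V^{\nu}(x)=(\nu+3/2)(\nu+1/2)/x^{2}+2(R^{\nu})'(x)$, the multiplicative contributions cancel on both sides of the desired identity. Writing $f:=\mathbb{D}_{\nu}\psi_n^{\nu}$ and $g:=D_{\nu,1/2}\Phi_k^{\nu,1/2}$, the assertion reduces to
\begin{equation*}
\int_0^1\bigl[-f''(x)g(x)+f(x)g''(x)\bigr]\,dx=0,
\end{equation*}
which by integration by parts on $[\varepsilon,1-\varepsilon]$ equals $\bigl[fg'-f'g\bigr]_{\varepsilon}^{1-\varepsilon}$. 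Thus the task is to show that the boundary expression $fg'-f'g$ vanishes in the limits $x\to 0^+$ and $x\to 1^-$, and to justify passage to the limit $\varepsilon\to 0^+$.

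For the boundary asymptotics of $f$, I would invoke Proposition~\ref{prop:asd}: $f(x)\simeq -x^{\nu+3/2}$ as $x\to 0^+$ and $f(x)\simeq(-1)^{n+1}(1-x)^{2}$ as $x\to 1^-$. To locate $g$, I would use \eqref{jdfe}, which gives $g=-\pi\sqrt{k(k+\nu+3/2)}\,\Phi_{k-1}^{\nu+1,3/2}$; then the explicit form of $\Phi_{k-1}^{\nu+1,3/2}(x)=c_{k-1}^{\nu+1,3/2}(\sin\tfrac{\pi x}{2})^{\nu+3/2}(\cos\tfrac{\pi x}{2})^{2}P_{k-1}^{\nu+1,3/2}(\cos\pi x)$ shows $g(x)\simeq x^{\nu+3/2}$ as $x\to 0^+$ and $g(x)\simeq(1-x)^{2}$ as $x\to 1^-$. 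Since both functions are smooth on $(0,1)$ and each is a product of the leading boundary factor with a function that is smooth up to the relevant endpoint (thanks to the power-series structure of $J_{\nu}(\lambda_{n,\nu}x)/x^{\nu}$ and of $(\sin\tfrac{\pi x}{2}/x)^{\nu+3/2}$, and to Taylor expansion at $x=1$), differentiation yields $f'(x),g'(x)=O(x^{\nu+1/2})$ as $x\to 0^+$ and $f'(x),g'(x)=O(1-x)$ as $x\to 1^-$.

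Combining these estimates, $f'g-fg'=O(x^{2\nu+2})$ as $x\to 0^+$ (which tends to $0$ precisely because $\nu>-1$) and $f'g-fg'=O((1-x)^{3})$ as $x\to 1^-$, so the boundary terms vanish. The passage $\varepsilon\to 0^+$ is legitimate because both $(\mathbb{M}_{\nu}f)g$ and $f(\mathbb{M}_{\nu}g)$ are integrable on $(0,1)$: on the left $\mathbb{M}_{\nu}f=\lambda_{n,\nu}^{2}f$ so $(\mathbb{M}_{\nu}f)g\simeq fg$ is controlled by $x^{2\nu+3}$ near $0$ and by $(1-x)^{4}$ near $1$; on the right, Lemma~\ref{lem:Fbd} lets us write $\mathbb{M}_{\nu}g=M_{\nu,1/2}g+F^{\nu}(x)g$, and since $g$ is an eigenfunction of $M_{\nu,1/2}$ and $F^{\nu}$ is bounded, $\mathbb{M}_{\nu}g$ has the same boundary behaviour as $g$.

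The main obstacle is the careful bookkeeping of the derivative asymptotics of $f$ and $g$ near $x=0$, which requires going one step past the leading-order behaviour of Proposition~\ref{prop:asd} and using the analytic structure of the underlying Bessel and Jacobi functions; once this is done, the vanishing of the boundary term is automatic for every $\nu>-1$.
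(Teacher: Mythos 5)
Your proof is correct, and it reaches the same destination as the paper by a recognizably different technical route. The paper also proves the identity by integrating by parts twice and killing boundary terms, but it works with the divergence form $\mathbb{M}_{\nu}f=-\psi_1^{\nu}\frac{d}{dx}\big[(\psi_1^{\nu})^{-2}\frac{d}{dx}(\psi_1^{\nu}f)\big]$; the resulting boundary terms involve $\frac{d}{dx}(\psi_1^{\nu}\,\mathbb{D}_{\nu}\psi_n^{\nu})$ and $\frac{d}{dx}(\psi_1^{\nu}\Phi_k^{\nu+1,3/2})$, which the authors bound \emph{without} differentiating $f$ and $g$ directly: they re-express $\frac{d}{dx}$ through $\mathbb{D}_{\nu}$, $\mathbb{D}_{\nu}^*$ (and $D_{\nu+1,3/2}$) plus multiplication by $\frac{\nu+1/2}{x}-R^{\nu}$, and then invoke the eigenequations together with Proposition \ref{prop:asd} and \eqref{Ras}. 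You instead use the flat Lagrange identity for the Schr\"odinger form and reduce everything to the vanishing of the Wronskian $fg'-f'g$ at the endpoints, which forces you to supply pointwise asymptotics for $f'$ and $g'$ themselves. Those asymptotics are true, and your justification (the factorizations $f(x)=x^{\nu+3/2}h(x)$ with $h$ analytic near $0$ coming from $\sqrt{x}\,J_{\nu+1}(\lambda x)$ and $R^{\nu}(x)\sqrt{x}\,J_{\nu}(\lambda x)$, and the double zero of the smooth function $f=(S_1^{\nu}-S_n^{\nu})\psi_n^{\nu}$ at $x=1$) is the right one, though it is the one place where your write-up is a sketch rather than a proof; if you flesh it out, spell out why $S_1^{\nu}-S_n^{\nu}$ is smooth up to $x=1$ with $S_1^{\nu}(1)-S_n^{\nu}(1)=0$ (Calogero's formula \eqref{Calo}). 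Your approach is somewhat more elementary and self-contained; the paper's buys uniformity of the bookkeeping by recycling the factorization operators and the estimates already proved for the differentiated systems. Your treatment of the limit $\varepsilon\to0^+$ via integrability of $(\mathbb{M}_{\nu}f)g$ and $f(\mathbb{M}_{\nu}g)$ (using the eigenequation on one side and Lemma \ref{lem:Fbd} on the other) is a point the paper handles only implicitly, so that part is a welcome addition.
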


\begin{proof}
What we need to show can be written as, see \eqref{jdfe},
$$
\Big\langle \mathbb{M}_{\nu}
\mathbb{D}_{\nu}\psi_n^{\nu}, \Phi_k^{\nu+1,3/2}\Big\rangle
= \Big\langle \mathbb{D}_{\nu}\psi_n^{\nu}, \mathbb{M}_{\nu}\Phi_k^{\nu+1,3/2}\Big\rangle, \qquad n \ge 2, \quad k \ge 0.
$$
We will use the divergence form of the differential operator $\mathbb{M}_{\nu}$,
\begin{equation} \label{divM}
\mathbb{M}_{\nu}f = - \psi_1^{\nu} \frac{d}{dx} \bigg[ \frac{1}{(\psi_1^{\nu})^2} \frac{d}{dx} \big(\psi_1^{\nu}f\big)\bigg],
\end{equation}
and integrate by parts. Denote
$$
\mathcal{I} = \Big\langle \mathbb{M}_{\nu}\mathbb{D}_{\nu}\psi_n^{\nu} , \Phi_k^{\nu+1,3/2}\Big\rangle
 = \int_0^1 \mathbb{M}_{\nu}\mathbb{D}_{\nu}\psi_n^{\nu}(x) \Phi_k^{\nu+1,3/2}(x)\, dx.
$$
Notice that this integral converges since it represents inner product of two $L^2(dx)$ functions.

Integrating twice by parts, with the aid of \eqref{divM}
(see the proof of \cite[Lemma 3.2]{NoRo2} for a completely analogous computation), we arrive at the identity
$$
\mathcal{I} = \mathcal{I}_1 + \mathcal{I}_2 + \mathcal{I}_3,
$$
where (skipping for the sake of brevity the argument $x$)
\begin{align*}
\mathcal{I}_1 & = -\frac{1}{\psi_1^{\nu}} \frac{d}{dx} \Big( \psi_1^{\nu} 
	\mathbb{D}_{\nu}\psi_n^{\nu}\Big) \Phi_k^{\nu+1,3/2}\Big|_0^1, \\
\mathcal{I}_2 & =  \frac{\mathbb{D}_{\nu}\psi_n^{\nu}}{\psi_1^{\nu}}
	\frac{d}{dx} \Big( \psi_1^{\nu}\Phi_k^{\nu+1,3/2}\Big)\Big|_{0}^1, \\
\mathcal{I}_3 & = \Big\langle \mathbb{D}_{\nu}\psi_n^{\nu}, \mathbb{M}_{\nu}\Phi_k^{\nu+1,3/2}\Big\rangle.
\end{align*}
Since the desired identity is $\mathcal{I} = \mathcal{I}_3$, it is enough we check that
$\mathcal{I}_1 = \mathcal{I}_2 = 0$.

We have, see \eqref{asy3}, Proposition \ref{prop:asd} and the definition of $\Phi_k^{\a,\b}$,
$$
\bigg| \frac{\Phi_k^{\nu+1,3/2}(x)}{\psi_1^{\nu}(x)}\bigg| \lesssim x(1-x), \qquad
\bigg| \frac{\mathbb{D}_{\nu}\psi_n^{\nu}(x)}{\psi_1^{\nu}(x)}\bigg| \lesssim x(1-x), \qquad x \in (0,1).
$$
In view of the above, our task will be done once we prove the bound (this is actually even more than we need)
$$
\bigg| \frac{d}{dx} \Big( \psi_1^{\nu}(x) \mathbb{D}_{\nu}\psi_n^{\nu}(x) \Big) \bigg| +
	\bigg| \frac{d}{dx} \Big( \psi_1^{\nu}(x) \Phi_k^{\nu+1,3/2}(x) \Big) \bigg| \lesssim x^{2\nu+1}(1-x)^2, \qquad x \in (0,1).
$$
Denote the first term on the left-hand side here by $\mathcal{J}_1$, and the second one by $\mathcal{J}_2$.

Taking into account the forms of $\mathbb{D}_{\nu}$ and $\mathbb{D}_{\nu}^*$ we have
\begin{align*}
\mathcal{J}_1 & \le \bigg| \frac{d}{dx} \psi_1^{\nu}(x) \mathbb{D}_{\nu}\psi_n^{\nu}(x)\bigg|
+ \bigg| \psi_1^{\nu}(x) \frac{d}{dx}\mathbb{D}_{\nu}\psi_n^{\nu}(x) \bigg| \\
& \le \big| \mathbb{D}_{\nu}\psi_1^{\nu}(x) \mathbb{D}_{\nu} \psi_n^{\nu}(x) \big|
	+ \big| \psi_1^{\nu}(x) \mathbb{D}_{\nu}^* \mathbb{D}_{\nu}\psi_n^{\nu}(x) \big|
	+ 2\bigg| \bigg( \frac{\nu+1/2}{x} - R^{\nu}(x)\bigg) \psi_1^{\nu}(x) \mathbb{D}_{\nu}\psi_n^{\nu}(x) \bigg|.
\end{align*}
Recall that $\mathbb{D}_{\nu}^* \mathbb{D}_{\nu} \psi_n^{\nu} = (\lambda_{n,\nu}^2-\lambda_{1,\nu}^2)\psi_n^{\nu}$.
Further, see \eqref{Ras},
\begin{equation} \label{eq:pot7}
\bigg| \frac{\nu+1/2}x -R^{\nu}(x)\bigg| \lesssim \frac{1}{x(1-x)}, \qquad x \in (0,1).
\end{equation}
Using this together with \eqref{asy3} and Proposition \ref{prop:asd} we arrive at the bound
$$
\mathcal{J}_1 \lesssim x^{2\nu+1} (1-x)^2, \qquad x \in (0,1).
$$

It remains to treat $\mathcal{J}_2$. We argue similarly as in case of $\mathcal{J}_1$ getting
\begin{align*}
\mathcal{J}_2 & \le \bigg| \frac{d}{dx} \psi_1^{\nu}(x) \Phi_k^{\nu+1,3/2}(x)\bigg|
	+ \bigg| \psi_1^{\nu}(x) \frac{d}{dx} \Phi_k^{\nu+1,3/2}(x)\bigg| \\
& \le \big| \mathbb{D}_{\nu}\psi_1^{\nu}(x) \Phi_k^{\nu+1,3/2}(x)\big|
	+ \bigg| \bigg( \frac{\nu+1/2}{x} - R^{\nu}(x)\bigg) \Phi_k^{\nu+1,3/2}(x)\bigg| \\
& \quad	+ \big| \psi_1^{\nu}(x) \mathbb{D}_{\nu+1,3/2} \Phi_k^{\nu+1,3/2}(x) \big|
	+ \bigg| \psi_1^{\nu}(x) \bigg( \pi \frac{2\nu+3}{4}\cot\frac{\pi x}2 - \pi \tan\frac{\pi x}2\bigg) \Phi_k^{\nu+1,3/2}(x)\bigg|.
\end{align*}
Recall that $|\Phi_k^{\a,\b}(x)| \lesssim x^{\a+1/2}(1-x)^{\b+1/2}$, $x \in (0,1)$.
This combined with \eqref{jdfe}, \eqref{eq:pot7}, \eqref{asy3} and Proposition \ref{prop:asd} leads to the bound
$$
\mathcal{J}_2 \lesssim x^{2\nu+1} (1-x)^2, \qquad x \in (0,1).
$$
This finishes the proof.
\end{proof}

Now, with Lemmas \ref{lem:Fbd} and \ref{lem:mixsym} at our disposal, we can prove the following.
\begin{lemma} \label{lem:dom}
Let $\nu > -1$. The domains of the self-adjoint operators $\mathbb{M}_{\nu}$ and $M_{\nu,1/2}$ coincide,
$$
\domain \mathbb{M}_{\nu} = \domain M_{\nu,1/2}.
$$
\end{lemma}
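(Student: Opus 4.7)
My plan is to prove the stronger statement that $\mathbb{M}_{\nu} = M_{\nu,1/2} + F^{\nu}$ as self-adjoint operators, from which the domain equality follows immediately. First, by Lemma~\ref{lem:Fbd} the function $F^{\nu}$ is bounded and real-valued, so the associated multiplication operator is bounded and self-adjoint on $L^2(dx)$. By the standard perturbation result for bounded symmetric perturbations of self-adjoint operators, $A := M_{\nu,1/2} + F^{\nu}$ is then self-adjoint with $\domain A = \domain M_{\nu,1/2}$.

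The heart of the argument will be to show that $\mathbb{M}_{\nu}$ and $A$ coincide, using Lemma~\ref{lem:mixsym} to force agreement on a common core. Since $\mathbb{M}_{\nu}(\mathbb{D}_{\nu}\psi_n^{\nu}) = \lambda_{n,\nu}^2\, \mathbb{D}_{\nu}\psi_n^{\nu}$, and since $\Phi_m^{\nu+1,3/2}$ is an eigenfunction of $M_{\nu,1/2}$ so that the action of $A$ on it agrees with the action of the differential operator $\mathbb{M}_{\nu}$, one reads Lemma~\ref{lem:mixsym} (after using \eqref{jdfe} to identify $D_{\nu,1/2}\Phi_k^{\nu,1/2}$ with a scalar multiple of $\Phi_{k-1}^{\nu+1,3/2}$ and reindexing $m=k-1$) as
\[
\lambda_{n,\nu}^2\, \big\langle \mathbb{D}_{\nu}\psi_n^{\nu}, \Phi_m^{\nu+1,3/2}\big\rangle
= \big\langle \mathbb{D}_{\nu}\psi_n^{\nu},\, A\,\Phi_m^{\nu+1,3/2}\big\rangle, \qquad n \ge 2,\ m \ge 0.
\]
Because $\spann\{\mathbb{D}_{\nu}\psi_n^{\nu}: n \ge 2\}$ is a core for $\mathbb{M}_{\nu}$ (standard from its spectral construction), this identity extends by graph-norm density to
\[
\big\langle \mathbb{M}_{\nu} h, \Phi_m^{\nu+1,3/2}\big\rangle = \big\langle h,\, A\,\Phi_m^{\nu+1,3/2}\big\rangle, \qquad h \in \domain \mathbb{M}_{\nu},\ m \ge 0.
\]
By the characterization of the adjoint, this forces $\Phi_m^{\nu+1,3/2} \in \domain \mathbb{M}_{\nu}^* = \domain \mathbb{M}_{\nu}$ and $\mathbb{M}_{\nu} \Phi_m^{\nu+1,3/2} = A\,\Phi_m^{\nu+1,3/2}$.

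To conclude, I will observe that $\spann\{\Phi_m^{\nu+1,3/2}: m\ge 0\}$ is a core for $M_{\nu,1/2}$, hence also for $A$, since the graph norms of $M_{\nu,1/2}$ and $A$ are equivalent on their common domain. Therefore $A$ is the closure of $A|_{\spann\{\Phi_m^{\nu+1,3/2}\}}$, which, by the previous step, is contained in the closed operator $\mathbb{M}_{\nu}$. Since two self-adjoint operators in which one extends the other must be equal, $\mathbb{M}_{\nu} = A$ and the lemma follows.

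The main obstacle is really Lemma~\ref{lem:mixsym} itself, whose verification already absorbed the delicate boundary analysis via the divergence form of $\mathbb{M}_{\nu}$ and the endpoint asymptotics of the two systems. Given that technical input, the remaining argument is essentially functional-analytic, combining the adjoint characterization with the preservation of cores under bounded perturbations.
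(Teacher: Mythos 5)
Your argument is correct and is essentially the proof the paper has in mind: the paper simply cites \cite[Theorem 3.1]{NoRo2} and omits the details, and that reference runs exactly your route — bounded self-adjoint perturbation $A = M_{\nu,1/2}+F^{\nu}$, the mixed symmetry of Lemma \ref{lem:mixsym} plus the adjoint characterization to place the $\Phi_m^{\nu+1,3/2}$ in $\domain\mathbb{M}_{\nu}$ with $\mathbb{M}_{\nu}\Phi_m^{\nu+1,3/2}=A\Phi_m^{\nu+1,3/2}$, and then the core/maximality argument forcing $\mathbb{M}_{\nu}=A$. Nothing to add.
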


\begin{proof}
The arguments are exactly the same as in the proof of \cite[Theorem 3.1]{NoRo2}, hence we omit the details.
\end{proof}

As a consequence of Lemmas \ref{lem:Fbd} and \ref{lem:dom} combined with the Trotter product formula
(see \cite[p.\,443]{NoRo2}) we infer that
$$
e^{-c_{\nu}t} H_t^{\nu,1/2}(x,y) \le \mathbb{H}_t^{\nu}(x,y) \le e^{c_{\nu}t} H_t^{\nu,1/2}(x,y), \qquad x,y \in (0,1), \quad t > 0,
$$
where $c_{\nu} = \sup_{x\in (0,1)}|F^{\nu}(x)| < \infty$.
Since $H_t^{\nu,1/2}(x,y) = \mathbb{G}_t^{\nu+1,3/2}(x,y)$, this leads, via Theorem \ref{thm:jachkest}, to sharp short time
estimates of the kernel $\mathbb{H}_t^{\nu}(x,y)$.
\begin{theorem} \label{thm:Hsest}
Let $\nu > -1$. Given any $T > 0$ fixed,
$$
\mathbb{H}_t^{\nu}(x,y) \simeq \bigg[ 1 \wedge \frac{xy}t\bigg]^{\nu+3/2}
		\bigg[ 1 \wedge \frac{(1-x)(1-y)}t\bigg]^{2} \frac{1}{\sqrt{t}} \exp\bigg( -\frac{(x-y)^2}{4t} \bigg),
$$
uniformly in $x,y \in (0,1)$ and $0 < t \le T$.
\end{theorem}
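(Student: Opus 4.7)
The plan is to read off the estimate directly from the two-sided semigroup comparison flagged just before the statement. Lemma \ref{lem:Fbd} says that the difference $F^{\nu} = \mathbb{M}_{\nu} - M_{\nu,1/2}$ acts as a bounded multiplication operator, with $c_{\nu} := \sup_{x \in (0,1)} |F^{\nu}(x)| < \infty$. Lemma \ref{lem:dom} says the self-adjoint extensions of $\mathbb{M}_{\nu}$ and $M_{\nu,1/2}$ share the same domain. These are the exact hypotheses needed to apply the Trotter product formula (as in the proof of \cite[Theorem 3.1]{NoRo2}), which yields the pointwise inequalities
$$
e^{-c_{\nu} t}\, H_t^{\nu,1/2}(x,y) \;\le\; \mathbb{H}_t^{\nu}(x,y) \;\le\; e^{c_{\nu} t}\, H_t^{\nu,1/2}(x,y),
\qquad x,y \in (0,1),\ t>0.
$$

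Next, I would restrict to $0 < t \le T$. On this time interval the factors $e^{\pm c_{\nu}t}$ lie between two positive constants depending only on $\nu$ and $T$, so the displayed inequalities upgrade to
$$
\mathbb{H}_t^{\nu}(x,y) \;\simeq\; H_t^{\nu,1/2}(x,y),
$$
uniformly in $x,y \in (0,1)$ and $0 < t \le T$. Then I would invoke the identification $H_t^{\nu,1/2}(x,y) = \mathbb{G}_t^{\nu+1,3/2}(x,y)$ established in Subsection \ref{ssec:jac_diff} (it holds because the differentiated Jacobi system for $(\alpha,\beta) = (\nu,1/2)$ coincides, up to reindexing, with the Jacobi system for $(\alpha+1,\beta+1) = (\nu+1,3/2)$).

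At this point the conclusion follows by feeding the Jacobi parameters $\alpha = \nu+1$ and $\beta = 3/2$ into Theorem \ref{thm:jachkest}. Since $\nu > -1$ we have $\alpha \ge 0 > -1/2$ and $\beta = 3/2 \ge -1/2$, so the theorem applies; reading off the exponents $\alpha + 1/2 = \nu + 3/2$ and $\beta + 1/2 = 2$ produces exactly the bound asserted in Theorem \ref{thm:Hsest}. Because essentially all the work is absorbed into the preparatory Lemmas \ref{lem:Fbd}, \ref{lem:mixsym}, and \ref{lem:dom} and into Theorem \ref{thm:jachkest}, there is no genuine obstacle remaining at this step; the main point that deserves explicit mention is merely the check that $\nu+1 \ge -1/2$ places $(\alpha,\beta)$ inside the allowable range of Theorem \ref{thm:jachkest}.
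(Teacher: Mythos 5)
Your proposal is correct and follows essentially the same route as the paper: the two-sided Trotter comparison $e^{-c_{\nu}t}H_t^{\nu,1/2} \le \mathbb{H}_t^{\nu} \le e^{c_{\nu}t}H_t^{\nu,1/2}$ deduced from Lemmas \ref{lem:Fbd} and \ref{lem:dom}, the identification $H_t^{\nu,1/2}=\mathbb{G}_t^{\nu+1,3/2}$, and Theorem \ref{thm:jachkest} with $(\alpha,\beta)=(\nu+1,3/2)$. Your explicit check that $\nu+1>-1/2$ keeps the parameters in the admissible range of Theorem \ref{thm:jachkest} is exactly the (only) point that needs verification at this stage.
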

Note that $\mathbb{H}_t^{\nu}(x,y)$ is strictly positive for any $t > 0$ and any $x,y \in (0,1)$, since
$H_t^{\nu,1/2}(x,y)$ has this property.

\subsection{Fourier-Bessel essential measure settings} \label{ssec:6ess}

Let us start with heat semigroups related to $\mathfrak{L}_{\nu}$ and $\mathfrak{L}_{\nu}^M$,
self-adjoint operators naturally associated with the system $\{\varphi_n^{\nu} : n \ge 1\}$, see Section \ref{FBneu}.
The semigroups
$$
\mathfrak{T}_t^{\nu} := \exp(-t\mathfrak{L}_{\nu}) \qquad \textrm{and} \qquad
	\mathfrak{T}_t^{\nu,M} := \exp\big(-t\mathfrak{L}^M_{\nu}\big), \qquad t \ge 0,
$$
have for $t>0$ integral representations in $L^2(d\eta_{\nu})$.
We denote the corresponding integral kernels by
$\mathfrak{G}_t^{\nu}(x,y)$ and $\mathfrak{G}_t^{\nu,M}(x,y)$, respectively, see \eqref{hessint} below.
Further, let $G_t^{\nu}(x,y)$ stand
for the heat kernel in the natural measure Fourier-Bessel setting. It is straightforward to check that
\begin{equation} \label{GK10}
\mathfrak{G}_t^{\nu}(x,y) = \frac{1}{\phi_1^{\nu}(x)\phi_1^{\nu}(y)} G_t^{\nu}(x,y), \qquad 
\mathfrak{G}_t^{\nu,M}(x,y) = e^{\lambda_{1,\nu}^2 t} \mathfrak{G}_t^{\nu}(x,y).
\end{equation}
Recall that $\phi_1^{\nu}(x) \simeq 1-x$, $x \in (0,1)$, see \eqref{asy2}.
Using then sharp estimates for $G_t^{\nu}(x,y)$ that were established in \cite{MSZ}, see also \cite{NoRo1,NoRo2}, we get the
following.
\begin{proposition} \label{prop:heess}
Let $\nu > -1$. Given any $T > 0$ fixed,
\begin{equation} \label{heess}
\mathfrak{G}_t^{\nu}(x,y) \simeq \mathfrak{G}_t^{\nu,M}(x,y) \simeq
	(t \vee xy)^{-\nu-1/2} \big[ t \vee (1-x)(1-y) \big]^{-1} \frac{1}{\sqrt{t}} \exp\bigg( -\frac{(x-y)^2}{4t}\bigg),
\end{equation}
uniformly in $x,y \in (0,1)$ and $0 < t \le T$. Moreover,
$$
\mathfrak{G}_t^{\nu}(x,y) \simeq e^{-\lambda_{1,\nu}^2 t}, \qquad
\mathfrak{G}_t^{\nu,M}(x,y) \simeq 1, \qquad x,y \in (0,1), \quad t \ge T.
$$
\end{proposition}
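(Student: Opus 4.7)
The plan is to derive both parts of the proposition directly from the identities \eqref{GK10}. The short time bound will be reduced to the sharp estimates for the natural measure Fourier-Bessel heat kernel $G_t^{\nu}$ established in \cite{MSZ} (see also \cite{NoRo1,NoRo2}), combined with the asymptotic $\phi_1^{\nu}(x) \simeq 1-x$ from \eqref{asy2}. The large time bound will be handled via the spectral expansion of $\mathfrak{G}_t^{\nu}(x,y)$.

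For $0 < t \le T$, I would invoke the sharp bound
$$
G_t^{\nu}(x,y) \simeq (t \vee xy)^{-\nu-1/2}\bigg[1 \wedge \frac{(1-x)(1-y)}{t}\bigg]\frac{1}{\sqrt{t}}\exp\bigg(-\frac{(x-y)^2}{4t}\bigg),
$$
valid uniformly in $x, y \in (0,1)$, and substitute it into $\mathfrak{G}_t^{\nu}(x,y) = G_t^{\nu}(x,y)/[\phi_1^{\nu}(x)\phi_1^{\nu}(y)]$. Replacing each $\phi_1^{\nu}$ by an equivalent multiple of $1-x$, and using the elementary identity $\big[1 \wedge \tfrac{(1-x)(1-y)}{t}\big]/\big[(1-x)(1-y)\big] = 1/\big[t \vee (1-x)(1-y)\big]$, produces \eqref{heess} at once. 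The same equivalence transfers to $\mathfrak{G}_t^{\nu,M}$ via $\mathfrak{G}_t^{\nu,M} = e^{\lambda_{1,\nu}^2 t}\mathfrak{G}_t^{\nu}$, since the factor $e^{\lambda_{1,\nu}^2 t}$ lies between two positive constants on $(0,T]$ and is absorbed into the implicit constants.

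For $t \ge T$, I would start from the spectral expansion
$$
\mathfrak{G}_t^{\nu}(x,y) = \sum_{n\ge 1} e^{-\lambda_{n,\nu}^2 t}\,\varphi_n^{\nu}(x)\,\varphi_n^{\nu}(y),
$$
and isolate the $n=1$ term, which equals $e^{-\lambda_{1,\nu}^2 t}$ because $\varphi_1^{\nu}\equiv 1$. The uniform bound $|\varphi_n^{\nu}|\lesssim n^{\nu+2}$ from Lemma \ref{lem:ues} together with $\lambda_{n,\nu}^2 - \lambda_{1,\nu}^2 \gtrsim n^2$ from \eqref{eigvas} dominates the tail by $C\,e^{-\lambda_{1,\nu}^2 t}\sum_{n\ge 2}\exp(-c n^2 t)\,n^{2\nu+4}$, which tends to $0$ as $t\to\infty$. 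Hence $\mathfrak{G}_t^{\nu}(x,y)\simeq e^{-\lambda_{1,\nu}^2 t}$ holds for $t \ge T_0$ with $T_0$ sufficiently large, and one recovers $\mathfrak{G}_t^{\nu,M}(x,y)\simeq 1$ by multiplication by $e^{\lambda_{1,\nu}^2 t}$. The intermediate window $t \in [T, T_0]$ is covered by the short time estimate just proved: its right-hand side is continuous and strictly positive on $(0,1)^2$ and, for $t$ in the compact range $[T, T_0]$, is bounded above and below by positive constants depending only on $T, T_0$ and $\nu$, which match $e^{-\lambda_{1,\nu}^2 t}$ up to constants. The main technical subtlety is precisely this patching at intermediate times $t \in [T, T_0]$, since the spectral expansion alone does not deliver two-sided bounds until $t$ is large enough for the tail to be strictly smaller than the leading term.
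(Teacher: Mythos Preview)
Your proposal is correct and, for the short time estimate $0<t\le T$, follows exactly the paper's approach: combine \eqref{GK10} with $\phi_1^{\nu}(x)\simeq 1-x$ and the sharp bounds for $G_t^{\nu}$ from \cite{MSZ}.

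For the large time estimate the paper again simply invokes the known sharp large time bounds for $G_t^{\nu}$ from \cite{MSZ,NoRo1,NoRo2} (which give $G_t^{\nu}(x,y)\simeq e^{-\lambda_{1,\nu}^2 t}\phi_1^{\nu}(x)\phi_1^{\nu}(y)$) and divides through by $\phi_1^{\nu}(x)\phi_1^{\nu}(y)$ via \eqref{GK10}. You instead argue directly from the spectral expansion of $\mathfrak{G}_t^{\nu}$, using Lemma~\ref{lem:ues} to control the tail and then patching the intermediate range $[T,T_0]$ with the short time estimate applied with upper cutoff $T_0$. Both routes are valid; the paper's is shorter since it piggybacks on an existing result, while yours is self-contained within the paper's toolkit and mirrors the argument later used for Proposition~\ref{prop:HlEest}. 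Your patching step is fine because the right-hand side of \eqref{heess}, for $t$ in a compact interval $[T,T_0]$ bounded away from $0$, is uniformly comparable to a constant in $x,y\in(0,1)$.
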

In view of the above result, the integral representations
\begin{equation} \label{hessint}
\mathfrak{T}_t^{\nu}f(x) = \int_0^1 \mathfrak{G}_t^{\nu}(x,y)f(y)\, d\eta_{\nu}(y), \qquad
\mathfrak{T}_t^{\nu,M}f(x) = \int_0^1 \mathfrak{G}_t^{\nu,M}(x,y)f(y)\, d\eta_{\nu}(y),
\end{equation}
extend the actions of the semigroups to $L^1(d\eta_{\nu})$, hence to all $L^p(d\eta_{\nu})$, $p \ge 1$.
Bring in the corresponding maximal operators
$$
\mathfrak{T}_{*}^{\nu}f = \sup_{t > 0}\big|\mathfrak{T}_t^{\nu}f\big|, \qquad
\mathfrak{T}_{*}^{\nu,M}f = \sup_{t > 0}\big|\mathfrak{T}_t^{\nu,M}f\big|.
$$
\begin{theorem} \label{thm:maxhess}
Let $\nu > -1$. The maximal operators $\mathfrak{T}_{*}^{\nu}$ and $\mathfrak{T}_{*}^{\nu,M}$ are bounded on
$L^p(d\eta_{\nu})$, $1 < p \le \infty$, and from $L^1(d\eta_{\nu})$ to weak $L^1(d\eta_{\nu})$.
\end{theorem}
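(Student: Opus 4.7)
The plan is to dominate both maximal operators pointwise by the Hardy--Littlewood maximal operator on the space of homogeneous type $((0,1), |\cdot|, d\eta_\nu)$, using the sharp kernel estimates from Proposition \ref{prop:heess}. Since $\mathfrak{G}_t^{\nu,M}(x,y) = e^{\lambda_{1,\nu}^2 t} \mathfrak{G}_t^\nu(x,y)$ by \eqref{GK10}, on any bounded time interval the $M$-version differs from the non-$M$ version only by an inessential bounded factor, so the two operators can be treated together. I would split the supremum at $t=1$.

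For large times, the second part of Proposition \ref{prop:heess} gives $\mathfrak{G}_t^\nu(x,y) + \mathfrak{G}_t^{\nu,M}(x,y) \lesssim 1$ uniformly in $x,y\in(0,1)$ and $t\ge 1$, so
\[
\sup_{t\ge 1}\bigl(|\mathfrak{T}_t^\nu f(x)| + |\mathfrak{T}_t^{\nu,M}f(x)|\bigr) \lesssim \|f\|_{L^1(d\eta_\nu)}.
\]
Since $\eta_\nu((0,1))<\infty$, the right-hand side is further controlled by $\|f\|_{L^p(d\eta_\nu)}$ for every $1\le p\le\infty$, which in particular trivializes the weak-type $(1,1)$ bound for the large-time parts.

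For the small-time parts, the core task is to establish the Gaussian-type upper bound
\[
\mathfrak{G}_t^\nu(x,y) + \mathfrak{G}_t^{\nu,M}(x,y) \lesssim \frac{1}{\eta_\nu(I(x,\sqrt{t}))}\,\exp\Bigl(-c\,\frac{(x-y)^2}{t}\Bigr), \qquad x,y\in(0,1),\ 0<t\le 1,
\]
for some $c\in(0,1/4)$, where $I(x,r)=(x-r,x+r)\cap(0,1)$. Once this is in hand, the standard layer-cake decomposition of the exponential combined with doubling of $\eta_\nu$ yields the pointwise domination $\sup_{0<t\le 1}|\mathfrak{T}_t^\nu f(x)| + \sup_{0<t\le 1}|\mathfrak{T}_t^{\nu,M}f(x)| \lesssim M^{\eta_\nu}f(x)$, where $M^{\eta_\nu}$ is the centered Hardy--Littlewood maximal operator with respect to $\eta_\nu$. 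Since $d\eta_\nu\simeq y^{2\nu+1}(1-y)^2\,dy$ and a direct calculation gives $\eta_\nu(I(x,r))\simeq r\,(x\vee r)^{2\nu+1}\,((1-x)\vee r)^2$ for $0<r\le 1/2$, the measure is doubling and $M^{\eta_\nu}$ is of strong type $(p,p)$ for $1<p\le\infty$ and of weak type $(1,1)$ by the general theory on spaces of homogeneous type.

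The main obstacle is verifying the Gaussian bound displayed above. In the near-diagonal regime $|x-y|\le\sqrt{t}/2$ one has $x\vee\sqrt{t}\simeq y\vee\sqrt{t}$, hence $t\vee xy\simeq (x\vee\sqrt{t})^2$ and $t\vee (1-x)(1-y)\simeq ((1-x)\vee\sqrt{t})^2$, so the explicit form in Proposition \ref{prop:heess} matches $1/\eta_\nu(I(x,\sqrt{t}))$ up to a constant and the exponent $c=1/4$ is preserved. In the far-diagonal regime $|x-y|\ge\sqrt{t}/2$ the polynomial prefactors in $x$ and $y$ may differ substantially, but the resulting mismatch grows only polynomially in $|x-y|/\sqrt{t}$ and is absorbed by splitting the Gaussian as $\exp(-(x-y)^2/(4t))=\exp(-(x-y)^2/(8t))\exp(-(x-y)^2/(8t))$, using the first factor to dominate the polynomial and keeping $c=1/8$ in the second.
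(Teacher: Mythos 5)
Your argument is correct, but it follows the route that the paper only flags as an ``alternative'' and leaves to the reader, rather than the route it actually carries out. The paper's proof handles $t\ge T$ exactly as you do, and for $t<T$ it transfers the conclusion from known maximal theorems in the Jacobi trigonometric \emph{natural measure} setting: the sharp short-time bounds \eqref{heess} coincide with the Jacobi heat kernel bounds for $\alpha=\nu$, $\beta=1/2$, and $d\eta_{\nu}$ is comparable to the corresponding Jacobi measure, so the Jacobi maximal results apply verbatim. You instead derive a genuine Gaussian upper bound $\mathfrak{G}_t^{\nu}(x,y)\lesssim \eta_{\nu}(I(x,\sqrt{t}))^{-1}\exp(-c(x-y)^2/t)$ and invoke the Hardy--Littlewood maximal theorem on the doubling space $((0,1),|\cdot|,d\eta_{\nu})$. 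This is more self-contained (it does not import the Jacobi maximal theorems of \cite{NoSj,NSS0}, only the general theory on spaces of homogeneous type), at the cost of actually checking that the polynomial prefactor mismatch is absorbed by the Gaussian. Your sketch of that check is sound, including the one point worth making explicit: for the factor $(t\vee xy)^{-\nu-1/2}$ the required inequality changes direction according to the sign of $\nu+1/2$, but in both directions one has $\frac{x^2\vee t}{xy\vee t}+\frac{xy\vee t}{x^2\vee t}\lesssim 1+\frac{|x-y|^2}{t}$ (split according to whether $y$ and $x$ are within a factor $2$ of each other), and likewise for the boundary factor after the reflection $x\mapsto 1-x$, so the loss is indeed polynomial in $|x-y|/\sqrt{t}$ and is killed by halving the Gaussian exponent. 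Both approaches rest entirely on Proposition \ref{prop:heess}; the paper's is shorter on the page, yours makes the mechanism explicit.
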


\begin{proof}
From Proposition \ref{prop:heess} we see that the parts of the maximal operators corresponding to $t \ge T$ can be easily
handled. On the other hand, the desired mapping properties of the complementary parts for $t < T$ are concluded
from maximal results in the Jacobi trigonometric natural measure setting (scaled to $(0,1)$, to be precise), see
\cite[Section 5]{NoSj} and also \cite[Section 5]{NSS0}. This is because the sharp short time bounds \eqref{heess}
are essentially the same as in the natural measure Jacobi case with indices $\alpha = \nu$ and $\beta = 1/2$
(see e.g.\ \cite[Theorem A]{NoSj} or \cite[Theorem 5.1]{NSS0}) and the corresponding measures,
i.e.\ $d\eta_{\nu}$ and the Jacobi measure, are comparable.

Alternatively, here one could also argue more directly by observing that from \eqref{heess} it follows that the kernels satisfy
Gaussian upper bounds with respect to the space of homogeneous type $((0,1),|\cdot|,d\eta_{\nu})$
(with $|\cdot|$ standing for the Euclidean distance), cf.\ comments succeeding \cite[Theorem 5.4]{NSS0}.
We leave the details to interested readers.
\end{proof}

\begin{corollary} \label{cor:maxhess}
Let $\nu > -1$ and $ 1 \le p \le \infty$. For each $f \in L^p(d\eta_{\nu})$,
$$
\lim_{t \to 0^+} \mathfrak{T}_{t}^{\nu}f = \lim_{t \to 0^+} \mathfrak{T}_{t}^{\nu,M}f = f \quad \textrm{a.e.}
$$
\end{corollary}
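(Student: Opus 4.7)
The plan is to deduce this from Theorem \ref{thm:maxhess} by the standard Banach principle together with a.e.\ convergence on a dense subspace.

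First I would verify convergence on the spectral dense class. For each $n \ge 1$ and $t > 0$ one has $\mathfrak{T}_t^{\nu}\varphi_n^{\nu} = e^{-\lambda_{n,\nu}^2 t}\varphi_n^{\nu}$ and $\mathfrak{T}_t^{\nu,M}\varphi_n^{\nu} = e^{-(\lambda_{n,\nu}^2-\lambda_{1,\nu}^2)t}\varphi_n^{\nu}$, so both quantities converge to $\varphi_n^{\nu}$ pointwise (indeed uniformly) as $t \to 0^+$. By linearity the same limit relation holds for every $g$ in $\lin\{\varphi_n^{\nu} : n \ge 1\}$. From the proof of Theorem \ref{thm:Riesz} (see also Remark \ref{rem:densC0}) this span is dense in $L^p(d\eta_{\nu})$ for every $1 \le p < \infty$.

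Next I would run the Banach principle argument. Fix $1 \le p < \infty$ and $f \in L^p(d\eta_{\nu})$, and for $\varepsilon > 0$ pick $g \in \lin\{\varphi_n^{\nu}\}$ with $\|f-g\|_{L^p(d\eta_{\nu})} < \varepsilon$. Writing
\begin{equation*}
\Omega(f)(x) := \limsup_{t\to 0^+}\big|\mathfrak{T}_t^{\nu}f(x) - f(x)\big|,
\end{equation*}
the triangle inequality and $\Omega(g) \equiv 0$ give
\begin{equation*}
\Omega(f)(x) \le \mathfrak{T}_{*}^{\nu}(f-g)(x) + |f(x)-g(x)|.
\end{equation*}
For $1 < p < \infty$ the strong $(p,p)$ bound from Theorem \ref{thm:maxhess} yields $\|\Omega(f)\|_{L^p(d\eta_{\nu})} \lesssim \varepsilon$, and letting $\varepsilon \to 0$ forces $\Omega(f) = 0$ a.e. For $p=1$ the same conclusion follows from the weak-type $(1,1)$ bound: for every $\lambda>0$,
\begin{equation*}
\eta_{\nu}\big(\{\Omega(f)>2\lambda\}\big) \le \eta_{\nu}\big(\{\mathfrak{T}_{*}^{\nu}(f-g)>\lambda\}\big) + \eta_{\nu}\big(\{|f-g|>\lambda\}\big) \lesssim \varepsilon/\lambda,
\end{equation*}
so $\{\Omega(f) > 2\lambda\}$ has measure zero and $\Omega(f)=0$ a.e. The case $p=\infty$ is handled for free: since $d\eta_{\nu}$ is a finite measure on $(0,1)$, any $f \in L^{\infty}(d\eta_{\nu})$ lies in $L^2(d\eta_{\nu})$, to which the previous argument applies. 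The reasoning for $\mathfrak{T}_t^{\nu,M}$ is identical, replacing $\mathfrak{T}_{*}^{\nu}$ by $\mathfrak{T}_{*}^{\nu,M}$.

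There is essentially no hard obstacle here; the only subtlety worth noting is to make sure the argument genuinely covers $p=1$, which is why one invokes the weak-type bound rather than a strong bound, and that the extension to $L^{\infty}$ is handled trivially by the finiteness of $d\eta_{\nu}$ rather than by any new density step.
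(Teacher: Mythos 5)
Your proof is correct and follows essentially the same route as the paper: a.e.\ convergence on the dense span of $\{\varphi_n^{\nu}\}$ combined with the maximal bounds of Theorem \ref{thm:maxhess} via the standard Banach principle. The paper merely streamlines the case analysis by noting that the finiteness of $d\eta_{\nu}$ reduces everything to $p=1$, which is the same observation you use for $p=\infty$.
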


\begin{proof}
It is enough to consider $p=1$, since the other $L^p$ spaces are contained in $L^1(d\eta_{\nu})$.
Then the arguments are standard. One uses Theorem \ref{thm:maxhess} and the linear density of
$\{\varphi_n^{\nu} : n \ge 1\}$ in $L^1(d\eta_{\nu})$, cf.\ Remark \ref{rem:densC0}.
Further details are well known, see e.g.\ \cite[Chapter 2, Section 2]{Duo}, and hence omitted.
\end{proof}

We now pass to the differentiated semigroups. They are defined analogously as in the Lebesgue measure
Fourier-Bessel setting, see \cite[Section 5]{NoSt1}. Thus the generators are the operators
$$
\mathfrak{M}_{\nu} = \mathfrak{L}_{\nu} + [d_{\nu},d_{\nu}^*], \qquad 
\mathfrak{M}^M_{\nu} = \mathfrak{L}^M_{\nu} + [d_{\nu},d_{\nu}^*],
$$
or rather their natural self-adjoint extensions related to the differentiated system $\{d_{\nu}\varphi_n^{\nu} : n \ge 2\}$
which is an orthogonal basis in $L^2(d\eta_{\nu})$. One has
$$
\mathfrak{M}_{\nu} \big( d_{\nu}\varphi_n^{\nu} \big) = \lambda_{n,\nu}^2 d_{\nu}\varphi_n^{\nu} \qquad \textrm{and} \qquad
\mathfrak{M}^M_{\nu} \big( d_{\nu}\varphi_n^{\nu} \big) = \big(\lambda_{n,\nu}^2-\lambda_{1,\nu}^2\big) d_{\nu}\varphi_n^{\nu},
\qquad n \ge 2.
$$
Recall that $d_{\nu}\varphi_n^{\nu} = (R^{\nu}-R_n^{\nu})\varphi_n^{\nu}$.
The integral kernels of
$$
\mathfrak{H}_t^{\nu} := \exp\big(-t \mathfrak{M}_{\nu}\big) \qquad \textrm{and} \qquad
\mathfrak{H}_t^{\nu,M} := \exp\big(-t \mathfrak{M}^M_{\nu}\big)
$$
are, respectively,
\begin{equation} \label{serHess}
\mathfrak{H}_t^{\nu}(x,y) := \sum_{n=2}^{\infty} e^{-\lambda_{n,\nu}^2 t}
	\frac{d_{\nu} \varphi_n^{\nu}(x) d_{\nu}\varphi_n^{\nu}(y)}{\|d_{\nu}\varphi_n^{\nu}\|_{L^2(d\eta_{\nu})}^2}
	= \frac{1}{\psi_1^{\nu}(x)\psi_1^{\nu}(y)} \mathbb{H}_t^{\nu}(x,y)
\end{equation}
and $\mathfrak{H}_t^{\nu,M}(x,y) := e^{\lambda_{1,\nu}^2 t} \mathfrak{H}_t^{\nu}(x,y)$.
Since these kernels are directly related to $\mathbb{H}_t^{\nu}(x,y)$, their sharp short time estimates follow from
Theorem~\ref{thm:Hsest} and the bounds $\psi_1^{\nu}(x) \simeq x^{\nu+1/2}(1-x)$, $x \in (0,1)$.
\begin{proposition} \label{prop:HsEest}
Let $\nu > -1$. Given any fixed $T>0$,
\begin{align*}
\mathfrak{H}_t^{\nu}(x,y) & \simeq \mathfrak{H}_t^{\nu,M}(x,y) \\ & \simeq \bigg[ 1 \wedge \frac{xy}t\bigg]
	\bigg[ 1 \wedge \frac{(1-x)(1-y)}t\bigg] (t \vee xy)^{-\nu-1/2} \big[ t \vee (1-x)(1-y)\big]^{-1}
	\frac{1}{\sqrt{t}} \exp\bigg( -\frac{(x-y)^2}{4t} \bigg),
\end{align*}
uniformly in $x,y \in (0,1)$ and $0 < t \le T$.
\end{proposition}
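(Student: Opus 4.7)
The plan is to derive the claimed estimates by combining the identity
$$
\mathfrak{H}_t^{\nu}(x,y) = \frac{1}{\psi_1^{\nu}(x)\psi_1^{\nu}(y)} \mathbb{H}_t^{\nu}(x,y)
$$
from \eqref{serHess} with the sharp short time bounds for $\mathbb{H}_t^{\nu}(x,y)$ established in Theorem \ref{thm:Hsest}, together with the boundary asymptotic $\psi_1^{\nu}(x) \simeq x^{\nu+1/2}(1-x)$, which follows from \eqref{asy3}. This gives, uniformly in $x,y \in (0,1)$ and $0 < t \le T$,
$$
\mathfrak{H}_t^{\nu}(x,y) \simeq \frac{1}{(xy)^{\nu+1/2}(1-x)(1-y)}
	\bigg[1 \wedge \frac{xy}{t}\bigg]^{\nu+3/2}
	\bigg[1 \wedge \frac{(1-x)(1-y)}{t}\bigg]^{2}
	\frac{1}{\sqrt{t}}\exp\bigg(-\frac{(x-y)^2}{4t}\bigg).
$$

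The remaining task is a purely algebraic simplification of the two boundary factors, which I will verify by splitting into cases. For the ``left endpoint'' factor, I shall check the identity
$$
\frac{1}{(xy)^{\nu+1/2}} \bigg[1 \wedge \frac{xy}{t}\bigg]^{\nu+3/2}
	= \bigg[1 \wedge \frac{xy}{t}\bigg] (t \vee xy)^{-\nu-1/2}
$$
by considering the two regimes $xy \le t$ and $xy > t$ separately; in each regime both sides reduce to the same monomial in $xy$ and $t$. An analogous (but simpler, since the exponents are integers) case analysis for the ``right endpoint'' factor yields
$$
\frac{1}{(1-x)(1-y)} \bigg[1 \wedge \frac{(1-x)(1-y)}{t}\bigg]^{2}
	= \bigg[1 \wedge \frac{(1-x)(1-y)}{t}\bigg] \big[t \vee (1-x)(1-y)\big]^{-1}.
$$
Inserting these two identities into the expression above produces the claimed estimate for $\mathfrak{H}_t^{\nu}(x,y)$.

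Finally, since $\mathfrak{H}_t^{\nu,M}(x,y) = e^{\lambda_{1,\nu}^2 t}\mathfrak{H}_t^{\nu}(x,y)$ by definition and the exponential prefactor satisfies $e^{\lambda_{1,\nu}^2 t} \simeq 1$ uniformly in $0 < t \le T$, the bound for $\mathfrak{H}_t^{\nu,M}(x,y)$ is equivalent to the one for $\mathfrak{H}_t^{\nu}(x,y)$ in the range considered. There is no real obstacle here: the whole argument is a bookkeeping exercise once Theorem \ref{thm:Hsest} is in hand, the only mildly delicate point being to keep track of the two regimes $xy \lessgtr t$ and $(1-x)(1-y) \lessgtr t$ when simplifying the prefactors.
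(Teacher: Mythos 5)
Your proposal is correct and follows exactly the route the paper takes: the paper derives Proposition \ref{prop:HsEest} from the identity \eqref{serHess}, Theorem \ref{thm:Hsest} and the bound $\psi_1^{\nu}(x)\simeq x^{\nu+1/2}(1-x)$, leaving the algebraic simplification implicit. Your case analysis of the two regimes $xy\lessgtr t$ and $(1-x)(1-y)\lessgtr t$ correctly supplies that omitted bookkeeping, and the reduction of $\mathfrak{H}_t^{\nu,M}$ to $\mathfrak{H}_t^{\nu}$ via $e^{\lambda_{1,\nu}^2 t}\simeq 1$ on $(0,T]$ is likewise what the paper intends.
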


Considering the large time bounds, we have the following.
\begin{proposition} \label{prop:HlEest}
Let $\nu > -1$. Given any $T > 0$ fixed,
$$
0 < \mathfrak{H}_t^{\nu}(x,y) \lesssim e^{-\lambda_{2,\nu}^2 t}, \qquad
0 < \mathfrak{H}_t^{\nu,M}(x,y) \lesssim e^{-(\lambda_{2,\nu}^2-\lambda_{1,\nu}^2) t}, \qquad x,y \in (0,1), \quad t \ge T.
$$
\end{proposition}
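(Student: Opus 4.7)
The second bound reduces to the first via the identity $\mathfrak{H}_t^{\nu,M}(x,y)=e^{\lambda_{1,\nu}^2 t}\mathfrak{H}_t^{\nu}(x,y)$ just recorded before the proposition, so I will focus on $\mathfrak{H}_t^{\nu}$. Positivity is essentially free: by \eqref{serHess} we have $\mathfrak{H}_t^{\nu}(x,y) = \mathbb{H}_t^{\nu}(x,y)/[\psi_1^{\nu}(x)\psi_1^{\nu}(y)]$, and as noted immediately after Theorem~\ref{thm:Hsest} the kernel $\mathbb{H}_t^{\nu}(x,y)$ is strictly positive for all $t>0$ and $x,y\in(0,1)$, while $\psi_1^{\nu}>0$ on $(0,1)$ by \eqref{asy3}.

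For the upper bound, the plan is to work directly from the spectral series
$$
\mathfrak{H}_t^{\nu}(x,y) = \sum_{n=2}^{\infty} e^{-\lambda_{n,\nu}^2 t}\,
	\frac{d_{\nu}\varphi_n^{\nu}(x)\, d_{\nu}\varphi_n^{\nu}(y)}{\|d_{\nu}\varphi_n^{\nu}\|_{L^2(d\eta_{\nu})}^2},
$$
factor out $e^{-\lambda_{2,\nu}^2 t}$, and then show that what remains is bounded uniformly in $x,y\in(0,1)$ and $t\ge T$. Using the uniform estimate $|d_{\nu}\varphi_n^{\nu}(x)|\lesssim n^{\nu+5}$ from Lemma~\ref{lem:ues} together with $\|d_{\nu}\varphi_n^{\nu}\|_{L^2(d\eta_{\nu})}^2=\lambda_{n,\nu}^2-\lambda_{1,\nu}^2\simeq n^2$ (Remark~\ref{rem:norm} combined with \eqref{eigvas}), the absolute value of the general term is controlled by $C\, n^{2\nu+8}\, e^{-(\lambda_{n,\nu}^2-\lambda_{2,\nu}^2)t}$.

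For $t\ge T$ we further estimate $e^{-(\lambda_{n,\nu}^2-\lambda_{2,\nu}^2)t}\le e^{-(\lambda_{n,\nu}^2-\lambda_{2,\nu}^2)T}$, and since $\lambda_{n,\nu}^2-\lambda_{2,\nu}^2\simeq n^2$ by \eqref{eigvas}, the resulting series $\sum_{n\ge 2} n^{2\nu+8} e^{-cTn^2}$ is summable with a sum depending only on $\nu$ and $T$. This yields
$$
\bigl|\mathfrak{H}_t^{\nu}(x,y)\bigr|\;\lesssim\; e^{-\lambda_{2,\nu}^2 t}, \qquad x,y\in(0,1),\ t\ge T,
$$
as required. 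No real obstacle is expected here: the only point that needs care is that the polynomial prefactor $n^{2\nu+8}$ from Lemma~\ref{lem:ues} is crude but harmless, since it is killed by the Gaussian-type factor $e^{-cn^2 T}$ coming from the uniform separation $t\ge T>0$; this is precisely the feature that fails at $t\to 0^+$ and forces us, in Proposition~\ref{prop:HsEest}, to rely on the much more delicate comparison with the Jacobi kernel rather than on the spectral series.
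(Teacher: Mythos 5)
Your argument is correct and essentially identical to the paper's proof: positivity is deduced from the strict positivity of $\mathbb{H}_t^{\nu}(x,y)$, and the upper bound is obtained by factoring $e^{-\lambda_{2,\nu}^2 t}$ out of the spectral series \eqref{serHess} and dominating the remainder via Lemma \ref{lem:ues}, the normalization $\|d_{\nu}\varphi_n^{\nu}\|_{L^2(d\eta_{\nu})}^2=\lambda_{n,\nu}^2-\lambda_{1,\nu}^2$ and \eqref{eigvas}. The only cosmetic imprecision is that $\lambda_{n,\nu}^2-\lambda_{2,\nu}^2\simeq n^2$ holds for $n\ge 3$ but fails for $n=2$ (the difference vanishes there), so the $n=2$ term should be split off and bounded by a constant times $e^{-\lambda_{2,\nu}^2 t}$ separately, exactly as the paper does.
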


\begin{proof}
The lower bounds follow from strict positivity of $\mathbb{H}_t^{\nu}(x,y)$, so it remains to show the upper bounds.
Clearly, we can restrict our attention to $\mathfrak{H}_t^{\nu}(x,y)$.

Using \eqref{serHess} together with Lemma \ref{lem:ues} and \eqref{eigvas}
(recall that $\|d_{\nu}\varphi_n^{\nu}\|_{L^2(d\eta_{\nu})}^2 = \lambda_{n,\nu}^2-\lambda_{1,\nu}^2$), we get
$$
\mathfrak{H}_t^{\nu}(x,y) \lesssim e^{-\lambda_{2,\nu}^2 t} + \e^{-\lambda_{2,\nu}^2 t}
	\sum_{n=3}^{\infty} e^{-c n^2 T} n^{2\nu + 9} \lesssim e^{-\lambda_{2,\nu}^2 t}, \qquad x,y \in (0,1), \quad t \ge T,
$$
where $c>0$ is a constant such that $\lambda_{n,\nu}^2 - \lambda_{2,\nu}^2 \ge c n^2$ for $n \ge 3$.
The conclusion follows.
\end{proof}

\begin{remark}
A sharp variant of the bounds from Proposition \ref{prop:HlEest} is
\begin{align*}
\mathfrak{H}_t^{\nu}(x,y) & \simeq e^{-\lambda_{2,\nu}^2 t} x(1-x) y(1-y), \qquad x,y \in (0,1), \quad t \ge T, \\
\mathfrak{H}_t^{\nu,M}(x,y) & \simeq e^{-(\lambda_{2,\nu}^2-\lambda_{1,\nu}^2) t} x(1-x) y(1-y), \qquad x,y \in (0,1), \quad t \ge T.
\end{align*}
However, proving this would require a longer and more subtle analysis.
Since this is not relevant for our developments, we shall not pursue this matter.
\end{remark}

As a straightforward consequence of Propositions \ref{prop:HsEest}, \ref{prop:HlEest} and \ref{prop:heess} we get the following.
\begin{corollary} \label{cor:HsEest}
Let $\nu > -1$. Then
$$
0 \le \mathfrak{H}_t^{\nu}(x,y) \lesssim \mathfrak{G}_t^{\nu}(x,y) \qquad \textrm{and} \qquad
0 \le \mathfrak{H}_t^{\nu,M}(x,y) \lesssim \mathfrak{G}_t^{\nu,M}(x,y)
$$
uniformly in $x,y \in (0,1)$ and $t > 0$.
\end{corollary}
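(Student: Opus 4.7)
The plan is to prove the corollary by splitting into a short-time regime $0 < t \le T$ and a long-time regime $t \ge T$, for any fixed $T > 0$, and in each regime directly compare the sharp estimates already established in the preceding propositions. Non-negativity of $\mathfrak{H}_t^{\nu}$ and $\mathfrak{H}_t^{\nu,M}$ is immediate: for $0 < t \le T$ it follows from the short-time lower bound of Proposition \ref{prop:HsEest} (or from strict positivity of $\mathbb{H}_t^{\nu}$ noted after Theorem \ref{thm:Hsest} combined with \eqref{serHess}), and for $t \ge T$ it is stated explicitly in Proposition \ref{prop:HlEest}.

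For the short-time regime, I would place the upper bound from Proposition \ref{prop:HsEest} next to the two-sided estimate from Proposition \ref{prop:heess}. The two expressions differ only by the factor
$$
\Big[ 1 \wedge \frac{xy}{t}\Big]\Big[ 1 \wedge \frac{(1-x)(1-y)}{t}\Big],
$$
which is bounded above by $1$. Hence
$$
\mathfrak{H}_t^{\nu}(x,y) \lesssim \mathfrak{G}_t^{\nu}(x,y), \qquad x,y \in (0,1),\ 0 < t \le T,
$$
and the same comparison transfers to the $M$-variants since $\mathfrak{G}_t^{\nu,M} = e^{\lambda_{1,\nu}^2 t}\mathfrak{G}_t^{\nu}$ and $\mathfrak{H}_t^{\nu,M} = e^{\lambda_{1,\nu}^2 t}\mathfrak{H}_t^{\nu}$, with the $e^{\lambda_{1,\nu}^2 t}$ factor uniformly controlled on $(0,T]$.

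For the long-time regime, Proposition \ref{prop:HlEest} gives $\mathfrak{H}_t^{\nu}(x,y) \lesssim e^{-\lambda_{2,\nu}^2 t}$, while Proposition \ref{prop:heess} gives $\mathfrak{G}_t^{\nu}(x,y) \simeq e^{-\lambda_{1,\nu}^2 t}$. Since $\lambda_{2,\nu} > \lambda_{1,\nu}$ by \eqref{zer_int}, one has
$$
\frac{\mathfrak{H}_t^{\nu}(x,y)}{\mathfrak{G}_t^{\nu}(x,y)} \lesssim e^{-(\lambda_{2,\nu}^2 - \lambda_{1,\nu}^2)\, t} \le e^{-(\lambda_{2,\nu}^2 - \lambda_{1,\nu}^2)\, T},
$$
uniformly in $x,y \in (0,1)$ and $t \ge T$. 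For the Markovian variant, the bounds $\mathfrak{H}_t^{\nu,M}(x,y) \lesssim e^{-(\lambda_{2,\nu}^2-\lambda_{1,\nu}^2)t} \le 1$ and $\mathfrak{G}_t^{\nu,M}(x,y) \simeq 1$ yield the comparison in the same way. Combining the two regimes (any choice of $T$, say $T=1$, will do) gives the asserted uniform bound for all $t > 0$. No step here is a serious obstacle: the proof is essentially an algebraic comparison of the already-established sharp kernel estimates, the only mild subtlety being to choose a single threshold $T$ so that the transitional factors $e^{\pm\lambda_{1,\nu}^2 t}$ are uniformly bounded in the short-time window.
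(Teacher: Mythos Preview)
Your proof is correct and follows essentially the same approach as the paper, which simply states that the corollary is a straightforward consequence of Propositions \ref{prop:HsEest}, \ref{prop:HlEest} and \ref{prop:heess}; you have filled in exactly the details one would expect, splitting into short- and long-time regimes and comparing the sharp estimates.
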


This, in turn, leads immediately to further important corollaries. First of all, the integral representations extend the
actions of the semigroups $\mathfrak{H}_t^{\nu}$ and $\mathfrak{H}_t^{\nu,M}$ to all $L^p(d\eta_{\nu})$, $p \ge 1$.
Moreover, the corresponding maximal operators
$$
\mathfrak{H}_{*}^{\nu}f = \sup_{t > 0}\big|\mathfrak{H}_t^{\nu}f\big|, \qquad
\mathfrak{H}_{*}^{\nu,M}f = \sup_{t > 0}\big|\mathfrak{H}_t^{\nu,M}f\big|,
$$
possess the mapping properties from Theorem \ref{thm:maxhess} and,
furthermore, the semigroups converge almost everywhere on the boundary.
\begin{corollary} \label{cor:map}
Let $\nu > -1$. The maximal operators $\mathfrak{H}_{*}^{\nu}$ and $\mathfrak{H}_{*}^{\nu,M}$ are bounded on
$L^p(d\eta_{\nu})$, $1 < p \le \infty$, and from $L^1(d\eta_{\nu})$ to weak $L^1(d\eta_{\nu})$.
\end{corollary}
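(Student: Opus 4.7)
The plan is to reduce Corollary \ref{cor:map} directly to Theorem \ref{thm:maxhess} via the pointwise domination of kernels already established.

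First I would observe that, by Corollary \ref{cor:HsEest}, there exists a constant $C > 0$ (depending only on $\nu$) such that
\[
0 \le \mathfrak{H}_t^{\nu}(x,y) \le C\,\mathfrak{G}_t^{\nu}(x,y), \qquad
0 \le \mathfrak{H}_t^{\nu,M}(x,y) \le C\,\mathfrak{G}_t^{\nu,M}(x,y),
\]
uniformly in $x,y \in (0,1)$ and $t>0$. Since the integral representations \eqref{hessint} extend $\mathfrak{T}_t^{\nu}$ and $\mathfrak{T}_t^{\nu,M}$ to all of $L^p(d\eta_{\nu})$, $p \ge 1$, the same domination implies that the analogous integral representations for $\mathfrak{H}_t^{\nu}$ and $\mathfrak{H}_t^{\nu,M}$ make sense on $L^p(d\eta_{\nu})$ as well.

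Next, for any $f \in L^p(d\eta_{\nu})$ with $1 \le p \le \infty$, by the triangle inequality under the integral sign,
\[
\big|\mathfrak{H}_t^{\nu}f(x)\big| \le \int_0^1 \mathfrak{H}_t^{\nu}(x,y)\,|f(y)|\, d\eta_{\nu}(y)
\le C \int_0^1 \mathfrak{G}_t^{\nu}(x,y)\,|f(y)|\,d\eta_{\nu}(y) = C\,\mathfrak{T}_t^{\nu}(|f|)(x),
\]
and similarly for the modified semigroup. Taking the supremum over $t>0$ yields the pointwise bounds
\[
\mathfrak{H}_{*}^{\nu}f(x) \le C\,\mathfrak{T}_{*}^{\nu}(|f|)(x), \qquad
\mathfrak{H}_{*}^{\nu,M}f(x) \le C\,\mathfrak{T}_{*}^{\nu,M}(|f|)(x).
\]

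The conclusion then follows immediately from Theorem \ref{thm:maxhess}: the $L^p(d\eta_{\nu})$-boundedness for $1 < p \le \infty$ and the weak-type $(1,1)$ estimate for the right-hand sides transfer to the left-hand sides. I do not anticipate any real obstacle here, as all the delicate analysis was already packaged into Corollary \ref{cor:HsEest} and Theorem \ref{thm:maxhess}; the present statement is simply the corollary of the kernel comparison principle.
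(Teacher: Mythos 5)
Your proposal is correct and is exactly the argument the paper intends: its proof of Corollary \ref{cor:map} consists of the single line ``Combine Corollary \ref{cor:HsEest} with Theorem \ref{thm:maxhess},'' and you have simply spelled out the routine pointwise domination of the maximal operators that this combination yields.
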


\begin{proof}
Combine Corollary \ref{cor:HsEest} with Theorem \ref{thm:maxhess}.
\end{proof}

\begin{corollary} \label{cor:max}
Let $\nu > -1$ and $1 \le p \le \infty$. For each $f \in L^p(d\eta_{\nu})$,
$$
\lim_{t \to 0^+} \mathfrak{H}_t^{\nu}f = \lim_{t \to 0^+} \mathfrak{H}_t^{\nu,M}f = f \quad \textrm{a.e.}
$$
\end{corollary}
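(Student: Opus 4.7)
The plan is to deduce Corollary \ref{cor:max} from Corollary \ref{cor:map} by the standard maximal function / dense subclass argument, exactly paralleling the proof of Corollary \ref{cor:maxhess}. First, since $d\eta_{\nu}$ is a finite measure on $(0,1)$, one has $L^p(d\eta_{\nu}) \subseteq L^1(d\eta_{\nu})$ for every $1 \le p \le \infty$, so it suffices to treat $f \in L^1(d\eta_{\nu})$.

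Next, I would identify a convenient dense subclass on which the pointwise convergence at $t \to 0^+$ is trivial. The natural choice is the linear span $\mathcal D := \lin\{d_{\nu}\varphi_n^{\nu} : n \ge 2\}$, since its elements are eigenfunctions of $\mathfrak M_{\nu}$ and $\mathfrak M_{\nu}^M$. Concretely, if $g = \sum_{n=2}^{N} c_n\, d_{\nu}\varphi_n^{\nu} \in \mathcal D$, then
$$
\mathfrak{H}_t^{\nu} g(x) = \sum_{n=2}^{N} c_n\, e^{-\lambda_{n,\nu}^2 t}\, d_{\nu}\varphi_n^{\nu}(x), \qquad
\mathfrak{H}_t^{\nu,M} g(x) = \sum_{n=2}^{N} c_n\, e^{-(\lambda_{n,\nu}^2-\lambda_{1,\nu}^2) t}\, d_{\nu}\varphi_n^{\nu}(x),
$$
which converge to $g(x)$ uniformly in $x$ as $t \to 0^+$. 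The density of $\mathcal D$ in $L^1(d\eta_{\nu})$ was already proved in the course of verifying Condition (T3) in the proof of Theorem \ref{thm:Riesz} (and is recorded in Remark \ref{rem:densC0}).

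Finally, the passage from convergence on the dense subclass to convergence for arbitrary $f \in L^1(d\eta_{\nu})$ is routine: given $\epsilon > 0$, pick $g \in \mathcal D$ with $\|f - g\|_{L^1(d\eta_{\nu})} < \epsilon$, write $f = g + (f-g)$, and apply the weak type $(1,1)$ bound for $\mathfrak{H}_{*}^{\nu}$ and $\mathfrak{H}_{*}^{\nu,M}$ from Corollary \ref{cor:map} to the remainder $f - g$; this yields, for every $\lambda > 0$,
$$
\eta_{\nu}\Big(\Big\{ \limsup_{t \to 0^+} |\mathfrak{H}_t^{\nu}f - f| > \lambda \Big\}\Big) \lesssim \frac{\epsilon}{\lambda},
$$
and similarly for $\mathfrak{H}_t^{\nu,M}$. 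Letting $\epsilon \to 0^+$ and then $\lambda \to 0^+$ delivers the claimed a.e.\ convergence; full details of this well-known scheme can be found in \cite[Chapter 2, Section 2]{Duo} and are omitted. No genuine obstacle is expected here, as all the required ingredients, namely the maximal inequalities, the density of $\mathcal D$, and the trivial convergence on $\mathcal D$, are already in place.
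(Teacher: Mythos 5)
Your proposal is correct and follows essentially the same route as the paper: reduce to $p=1$ by finiteness of $\eta_{\nu}$, use the linear density of $\{d_{\nu}\varphi_n^{\nu} : n \ge 2\}$ in $L^1(d\eta_{\nu})$ from Remark \ref{rem:densC0} together with the trivial convergence on that span, and conclude via the weak type $(1,1)$ bound of Corollary \ref{cor:map} by the standard scheme in \cite[Chapter 2, Section 2]{Duo}. The paper's proof is just a terser version of exactly this argument.
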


\begin{proof}
Use Corollary \ref{cor:map} and the linear density in $L^1(d\eta)$ of the system
$\{d_{\nu} \varphi_n^{\nu} : n \ge 2\}$, cf.\ Remark \ref{rem:densC0}.
See the proof of Corollary \ref{cor:maxhess}.
\end{proof}

\begin{remark} \label{rem:smooth}
Kernels of all semigroups appearing in Section \ref{ssec:6ess} are smooth functions of $(x,y,t) \in (0,1)^2 \times (0,\infty)$.
In cases of $\mathfrak{G}_t^{\nu}(x,y)$ and $\mathfrak{G}_t^{\nu,M}(x,y)$ this follows from the connection with
$G_t^{\nu}(x,y)$ and the smoothness of the latter which is known. For $\mathfrak{H}_t^{\nu}(x,y)$ and $\mathfrak{H}_t^{\nu,M}(x,y)$
(as well as for the previous kernels) the property can be inferred from the series representations.
Here the main facts needed are the uniform bounds from Lemma \ref{lem:ues} and the identities
$d_{\nu}^* d_{\nu} = \mathfrak{L}_{\nu} - \lambda_{1,\nu}^2$, $d_{\nu} d_{\nu}^{*} = \mathfrak{M}_{\nu} - \lambda_{1,\nu}^2$.
The details are rather elementary.
\end{remark}

\begin{remark} \label{rem:serdef}
The semigroups appearing in Section \ref{sec:diff_semi} are given on $L^p(d\eta_{\nu})$, $1 \le p \le \infty$, by their integral
representations. Equivalently, they are also given pointwise by their Fourier-Bessel series which a priori define the semigroups
in $L^2(d\eta_{\nu})$. This can be verified with the aid of Lemma \ref{lem:ues} in a well-known way, see e.g.\ \cite{N}.
The crucial point is that the Fourier-Bessel coefficients of an $L^p(d\eta_{\nu})$ function grow at most polynomially.
Moreover, with the series representation it is straightforward to check that $\mathfrak{T}_t^{\nu}f(x)$ and
$\mathfrak{H}_t^{\nu}f(x)$, hence also $\mathfrak{T}_t^{\nu,M}f(x)$ and $\mathfrak{H}_t^{\nu,M}f(x)$, are
smooth functions of $(x,t) \in (0,1)\times (0,\infty)$ for each $f \in L^p(d\eta_{\nu})$, $p \ge 1$.
Furthermore, the representing Fourier-Bessel series can be multiply differentiated term by term, both in $x$ and $t$.
\end{remark}

\subsection{Comments on natural and Lebesgue measures Fourier-Bessel settings} \label{ssec:6com}

As it was already mentioned, the main focus of Section \ref{sec:diff_semi} was put on the essential measure
Fourier-Bessel settings. Thus Section \ref{ssec:FBL_diff} contains in principle only what is needed to prepare
for Section \ref{ssec:6ess}. Nevertheless, in the Lebesgue measure Fourier-Bessel context studying
maximal operators and boundary convergence of the differentiated semigroups makes sense and is perfectly
available given what was done in Section \ref{sec:diff_semi}.
Similar comments pertain to the natural measure Fourier-Bessel setting that was not considered in Section~\ref{sec:diff_semi}.

\section{Sobolev spaces} \label{sec:Sob}

This section is devoted to Sobolev spaces associated with Fourier-Bessel expansions.
Our main focus is again put on the essential measure Fourier-Bessel setting, where as the main result we establish
an isomorphism between Sobolev and potential spaces.
In the other settings we make some crucial observations, in particular that the derivatives used so far in the literature
are not suitable for defining Sobolev spaces in those contexts.

\subsection{Essential measure Fourier-Bessel setting} \label{ssec:sob_ess}

Let $p \ge 1$. We define the Sobolev space ${\W}^p_{\nu}$ as
$$
{\W}^p_{\nu} := \big\{ f \in L^p(d\eta_{\nu}) : d_{\nu}f \in L^p(d\eta_{\nu}) \big\}.
$$
Here $d_{\nu}$ is understood in a weak sense, thus $d_{\nu}f \in L^p(d\eta_{\nu})$ means that the distribution
$d_{\nu}f$ is represented by an $L^p(d\eta_{\nu})$ function.
Equipped with the norm
$$
{\|f\|}_{{{\W}}^p_{\nu}} := \|f\|_{L^p(d\eta_{\nu})} + \|d_{\nu}f\|_{L^p(d\eta_{\nu})},
$$
${\W}^p_{\nu}$ becomes a Banach space.
We now show that the linear span of the system $\{\varphi_n^{\nu}\}$ is dense in ${\W}^p_{\nu}$.

Denote
$$
{S}_{\nu} = \lin\{ \varphi_n^{\nu} : n \ge 1 \}.
$$
\begin{proposition} \label{prop:Sdense}
Let $\nu > -1$. For each $1 \le p < \infty$, ${S}_{\nu}$ is a dense subspace of $\W^p_{\nu}$.
\end{proposition}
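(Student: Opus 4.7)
The plan is to regularize $f\in\W^p_\nu$ by the heat semigroup $\{\mathfrak{T}^{\nu,M}_t\}$ and then approximate the smoothed function by partial sums of its Fourier-Bessel series. The proof will proceed in two stages: first, I would show $\mathfrak{T}^{\nu,M}_t f \to f$ in the $\W^p_\nu$ norm as $t\to 0^+$; second, for each fixed $t>0$, I would approximate $\mathfrak{T}^{\nu,M}_t f$ in $\W^p_\nu$ by the partial sums of its expansion in $\{\varphi_n^\nu\}$.

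For the first stage, the convergence $\mathfrak{T}^{\nu,M}_t f \to f$ in $L^p(d\eta_\nu)$ follows from strong continuity: the semigroup is uniformly $L^p$-bounded (Theorem~\ref{thm:maxhess}) and on the $L^p(d\eta_\nu)$-dense subspace $\lin\{\varphi_n^\nu\}$ the convergence is visible term by term and uniformly on $(0,1)$ by Lemma~\ref{lem:ues}. For the derivative part, the engine is the intertwining identity
$$
d_\nu \mathfrak{T}^{\nu,M}_t f = \mathfrak{H}^{\nu,M}_t (d_\nu f), \qquad f \in \W^p_\nu,
$$
after which $\mathfrak{H}^{\nu,M}_t (d_\nu f) \to d_\nu f$ in $L^p(d\eta_\nu)$ by the analogous strong-continuity argument, based on Corollary~\ref{cor:map} and density of $\lin\{d_\nu\varphi_n^\nu : n\ge 2\}$ in $L^p(d\eta_\nu)$ (the latter being established along the lines of the proof of Theorem~\ref{thm:Riesz}).

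To justify the intertwining, I would match Fourier-Bessel expansions on both sides. Smoothness of the kernel $\mathfrak{G}_t^{\nu,M}$ and uniform convergence of its spectral series (from Lemma~\ref{lem:ues} and the Gaussian factor $e^{-cn^2t}$) allow termwise differentiation, yielding
$$
d_\nu \mathfrak{T}^{\nu,M}_t f(x) = \sum_{n\ge 2} e^{-(\lambda_{n,\nu}^2-\lambda_{1,\nu}^2)t}\, a_n \, d_\nu\varphi_n^\nu(x),
$$
where $a_n = \langle f,\varphi_n^\nu\rangle_{d\eta_\nu}$ and $|a_n|\lesssim n^{\nu+2}\|f\|_{L^1(d\eta_\nu)}$ via Lemma~\ref{lem:ues} and finiteness of $\eta_\nu$. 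The right-hand side of the intertwining admits the expansion with coefficients $e^{-(\lambda_{n,\nu}^2-\lambda_{1,\nu}^2)t}\, b_n/(\lambda_{n,\nu}^2-\lambda_{1,\nu}^2)$, where $b_n=\langle d_\nu f, d_\nu\varphi_n^\nu\rangle_{d\eta_\nu}$; a single integration by parts using the divergence form $d_\nu^* g = -w^{-1}(wg)'$ (with $w$ the density of $\eta_\nu$) together with $d_\nu^* d_\nu\varphi_n^\nu = (\lambda_{n,\nu}^2-\lambda_{1,\nu}^2)\varphi_n^\nu$ collapses $b_n$ to $(\lambda_{n,\nu}^2-\lambda_{1,\nu}^2)a_n$, matching the two expansions.

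For the second stage, the polynomial bounds of Lemma~\ref{lem:ues} together with the exponential damping $e^{-cn^2t}$ force the Fourier-Bessel series of $\mathfrak{T}^{\nu,M}_t f$ and its term-by-term derivative to converge uniformly on $(0,1)$, hence also in $L^p(d\eta_\nu)$ (as $\eta_\nu$ is finite); so the partial sums approximate $\mathfrak{T}^{\nu,M}_t f$ by elements of $S_\nu$ in the $\W^p_\nu$ norm, and combined with the first stage this yields density. The main obstacle is the rigorous derivation of the intertwining identity for arbitrary $f \in \W^p_\nu$: the boundary terms $w(y)\, d_\nu\varphi_n^\nu(y)\, f(y)$ at $y=0$ and $y=1$ arising in the integration by parts must be shown to vanish without assuming any boundary regularity of $f$, which requires a limit-from-the-interior argument exploiting the endpoint decay of $w\cdot d_\nu\varphi_n^\nu$ from Proposition~\ref{prop:asd} together with the weighted integrability of $f$.
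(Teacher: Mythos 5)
Your proposal follows essentially the same route as the paper's proof: heat-semigroup regularization of $f\in\W^p_{\nu}$, the intertwining identity $d_{\nu}\mathfrak{T}_t^{\nu}f=\mathfrak{H}_t^{\nu}(d_{\nu}f)$ justified by reducing $\langle d_{\nu}f,d_{\nu}\varphi_n^{\nu}\rangle_{d\eta_{\nu}}$ to $(\lambda_{n,\nu}^2-\lambda_{1,\nu}^2)\langle f,\varphi_n^{\nu}\rangle_{d\eta_{\nu}}$ via a cutoff (limit-from-the-interior) integration by parts exploiting the decay in Proposition \ref{prop:asd}, and finally approximation of the smoothed function by its partial sums using the polynomial bounds of Lemma \ref{lem:ues} against the Gaussian damping. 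The one detail to watch is strong $L^1$-continuity as $t\to 0^+$: Theorem \ref{thm:maxhess} gives only a weak-type $(1,1)$ bound for the maximal operator, so for $p=1$ you should instead invoke uniform $L^1$-boundedness of the semigroups (from kernel symmetry and $\mathfrak{T}_t^{\nu}\boldsymbol{1}\le 1$), exactly as the paper does in Lemma \ref{lem:maxpc}.
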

Proving this requires some technical preparation.

\begin{lemma} \label{lem:maxpc}
Let $\nu > -1$ and $1 \le p < \infty$. For each $f \in L^p(d\eta_{\nu})$,
$$
\lim_{t \to 0^+} \big\| \mathfrak{T}_t^{\nu}f - f\big\|_{L^p(d\eta_{\nu})}
= \lim_{t \to 0^+} \big\| \mathfrak{H}_t^{\nu}f - f\big\|_{L^p(d\eta_{\nu})} = 0.
$$
\end{lemma}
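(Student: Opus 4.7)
\emph{Plan.} I would prove strong continuity at $t=0^+$ on $L^p(d\eta_\nu)$ by the standard scheme: uniform boundedness of the operators plus convergence on a dense subspace, closed by a three-epsilon argument. The first task is to show that both families $\{\mathfrak{T}_t^\nu\}_{t>0}$ and $\{\mathfrak{H}_t^\nu\}_{t>0}$ are uniformly bounded on $L^p(d\eta_\nu)$. For $\mathfrak{T}_t^\nu$ this follows from sub-Markovianity: the Markov semigroup $\mathfrak{T}_t^{\nu,M}$ (generated by $\mathfrak{L}_\nu^M$, whose bottom eigenvalue is $0$ with constant bottom eigenfunction) satisfies $\int \mathfrak{G}_t^{\nu,M}(x,y)\, d\eta_\nu(y) = 1$, and the kernel is symmetric, so Schur's test gives $\|\mathfrak{T}_t^{\nu,M}\|_{L^p\to L^p}\le 1$; using $\mathfrak{G}_t^\nu = e^{-\lambda_{1,\nu}^2 t}\mathfrak{G}_t^{\nu,M}\le \mathfrak{G}_t^{\nu,M}$ yields the same bound for $\mathfrak{T}_t^\nu$. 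For $\mathfrak{H}_t^\nu$, Corollary \ref{cor:HsEest} provides the pointwise kernel comparison $0\le \mathfrak{H}_t^\nu(x,y)\lesssim \mathfrak{G}_t^\nu(x,y)$ uniform in $x,y\in(0,1)$ and $t>0$, so that $|\mathfrak{H}_t^\nu f|\lesssim \mathfrak{T}_t^\nu|f|$ pointwise and uniform $L^p$-boundedness is inherited.

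Next, I would establish convergence on a dense subspace. By what was shown in the proof of Theorem \ref{thm:Riesz} (see also Remark \ref{rem:densC0}), both $S_\nu = \lin\{\varphi_n^\nu : n\ge 1\}$ and $\lin\{d_\nu\varphi_n^\nu : n\ge 2\}$ are dense in $L^p(d\eta_\nu)$ for $1\le p<\infty$. If $g=\sum_{n\in F}c_n\varphi_n^\nu$ is a finite linear combination, the series representation from Remark \ref{rem:serdef} gives $\mathfrak{T}_t^\nu g = \sum_{n\in F}c_n\, e^{-\lambda_{n,\nu}^2 t}\varphi_n^\nu$, so $\mathfrak{T}_t^\nu g\to g$ uniformly on $(0,1)$ as $t\to 0^+$; since $\eta_\nu$ is a finite measure, uniform convergence forces $L^p(d\eta_\nu)$ convergence. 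The identical computation, carried out with $g=\sum_{n\in F}c_n\, d_\nu\varphi_n^\nu$, handles $\mathfrak{H}_t^\nu$.

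The two ingredients combine via the familiar three-epsilon inequality: for $f\in L^p(d\eta_\nu)$ and $\epsilon>0$, choose $g$ from the appropriate dense subspace with $\|f-g\|_{L^p(d\eta_\nu)}<\epsilon$, and estimate
\begin{equation*}
\|\mathfrak{T}_t^\nu f - f\|_{L^p(d\eta_\nu)}
\le \|\mathfrak{T}_t^\nu(f-g)\|_{L^p(d\eta_\nu)} + \|\mathfrak{T}_t^\nu g - g\|_{L^p(d\eta_\nu)} + \|f-g\|_{L^p(d\eta_\nu)},
\end{equation*}
so that $\limsup_{t\to 0^+}\|\mathfrak{T}_t^\nu f - f\|_{L^p(d\eta_\nu)}\le (C+1)\epsilon$, and letting $\epsilon\to 0$ gives the first limit. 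The same argument, with $g\in\lin\{d_\nu\varphi_n^\nu\}$, proves the second.

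\emph{Main obstacle.} There is no essential obstacle, since all the required inputs have already been assembled in earlier sections; the only mildly delicate point is the uniform $L^1$-boundedness of $\mathfrak{H}_t^\nu$, which would fail if one tried to interpolate from the maximal theorem alone but is immediate from the pointwise kernel comparison in Corollary \ref{cor:HsEest}.
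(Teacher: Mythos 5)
Your proof is correct and follows essentially the same route as the paper: convergence on the dense subspaces $S_\nu$ and $\lin\{d_\nu\varphi_n^\nu\}$, combined with uniform $L^p$-boundedness of the semigroups obtained from Markovianity of $\mathfrak{T}_t^{\nu,M}$, kernel symmetry, and the comparison in Corollary \ref{cor:HsEest}. The only cosmetic difference is that you use uniform boundedness for all $p$, whereas the paper invokes the maximal theorems for $p>1$ and reserves the uniform-boundedness argument for $p=1$, while explicitly noting that the latter suffices throughout.
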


\begin{proof}
The arguments are very standard, so we provide only their brief description.
The convergence clearly holds when $f$ belongs to a dense subset of $L^p(d\eta_{\nu})$, either
$S_{\nu}$ or $\lin\{d_{\nu}\varphi_n^{\nu} : n \ge 2\}$, respectively (see Remark \ref{rem:densC0}).
Then to get the convergence for arbitrary $f \in L^p(d\eta_{\nu})$ combine the
above with $L^p$-boundedness of $\mathfrak{T}_*^{\nu}$ and $\mathfrak{H}_*^{\nu}$, see Theorem \ref{thm:maxhess} and
Corollary \ref{cor:map}. This works for $p > 1$.

To cover $p =1$ observe that above, instead of the maximal theorems, uniform $L^p$-boundedness of the semigroups is also
sufficient. Our semigroups are indeed uniformly bounded also in $L^1(d\eta_{\nu})$,
$$
\big\| \mathfrak{T}_t^{\nu}f\big\|_{L^1(d\eta_{\nu})} \lesssim \|f\|_{L^1(d\eta_{\nu})} \qquad \textrm{and} \qquad
\big\| \mathfrak{H}_t^{\nu}f\big\|_{L^1(d\eta_{\nu})} \lesssim \|f\|_{L^1(d\eta_{\nu})}
$$
uniformly in $f \in L^1(d\eta_{\nu})$ and $t > 0$. This follows from symmetry of the kernels of these semigroups and
their uniform boundedness in $L^{\infty}$. The latter is a consequence of the bounds
$$
\mathfrak{T}_t^{\nu} \boldsymbol{1}(x) \le \mathfrak{T}_t^{\nu,M}\boldsymbol{1}(x) = 1 \qquad \textrm{and} \qquad
\mathfrak{H}_t^{\nu} \boldsymbol{1}(x) \lesssim \mathfrak{T}_t^{\nu}\boldsymbol{1}(x) \le 1, \qquad
	x \in (0,1), \quad t > 0,
$$
see Corollary \ref{cor:HsEest}; here $\boldsymbol{1}$ is a constant function equal identically to $1$.
\end{proof}

\begin{lemma} \label{lem:derT}
Let $\nu > -1$ and $1 \le p \le \infty$. For each $f \in \W^p_{\nu}$ and $t > 0$
$$
d_{\nu} \mathfrak{T}_t^{\nu}f(x) = \mathfrak{H}_t^{\nu}(d_{\nu}f)(x), \qquad x \in (0,1).
$$
\end{lemma}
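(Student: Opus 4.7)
The plan is to reduce the pointwise identity to an equality of Fourier-Bessel coefficients in the orthogonal basis $\{d_\nu\varphi_n^\nu : n\ge 2\}$, and then to justify that equality by a cutoff-based integration by parts. Both sides of the claim are, for each fixed $t>0$, smooth functions of $x\in(0,1)$ given by term-by-term $x$-differentiable Fourier-Bessel series, thanks to Remark \ref{rem:smooth} and Remark \ref{rem:serdef}. So matching coefficients is enough.

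Writing $c_n := \langle f,\varphi_n^\nu\rangle_{d\eta_\nu}$ and using $d_\nu\varphi_1^\nu\equiv 0$, term-by-term $d_\nu$-differentiation of the series for $\mathfrak{T}_t^\nu f$ gives
\begin{equation*}
d_\nu \mathfrak{T}_t^\nu f(x) = \sum_{n\ge 2} c_n e^{-\lambda_{n,\nu}^2 t}\, d_\nu\varphi_n^\nu(x).
\end{equation*}
On the other hand, Remark \ref{rem:serdef} applied to $\mathfrak{H}_t^\nu$ acting on $d_\nu f\in L^p(d\eta_\nu)$ yields
\begin{equation*}
\mathfrak{H}_t^\nu(d_\nu f)(x) = \sum_{n\ge 2} \tilde b_n e^{-\lambda_{n,\nu}^2 t}\, d_\nu\varphi_n^\nu(x),\qquad
\tilde b_n := \frac{\langle d_\nu f, d_\nu\varphi_n^\nu\rangle_{d\eta_\nu}}{\lambda_{n,\nu}^2-\lambda_{1,\nu}^2}.
\end{equation*}
Hence the lemma reduces to the coefficient identity $\tilde b_n = c_n$ for $n\ge 2$, i.e.
\begin{equation*}
\langle d_\nu f, d_\nu\varphi_n^\nu\rangle_{d\eta_\nu} = (\lambda_{n,\nu}^2-\lambda_{1,\nu}^2)\langle f,\varphi_n^\nu\rangle_{d\eta_\nu},
\end{equation*}
which is the weak pairing of $d_\nu^* d_\nu\varphi_n^\nu = (\lambda_{n,\nu}^2-\lambda_{1,\nu}^2)\varphi_n^\nu$ with $f$, provided an integration by parts against $d_\nu f$ is legitimate.

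To justify it, pick $\eta_\epsilon\in C_c^\infty(0,1)$ with $\eta_\epsilon\equiv 1$ on $[2\epsilon,1-2\epsilon]$, $\support\eta_\epsilon\subset(\epsilon,1-\epsilon)$, and $|\eta_\epsilon'|\lesssim 1/\epsilon$. Denoting the density of $d\eta_\nu$ by $w$, the product $\eta_\epsilon\, d_\nu\varphi_n^\nu\, w$ lies in $C_c^\infty(0,1)$, so the distributional definition of $d_\nu f$ combined with $d_\nu^*(\eta_\epsilon g)=\eta_\epsilon d_\nu^* g-\eta_\epsilon' g$ delivers
\begin{equation*}
\langle d_\nu f,\eta_\epsilon d_\nu\varphi_n^\nu\rangle_{d\eta_\nu}
= (\lambda_{n,\nu}^2-\lambda_{1,\nu}^2)\langle f,\eta_\epsilon\varphi_n^\nu\rangle_{d\eta_\nu}
- \langle f,\eta_\epsilon' d_\nu\varphi_n^\nu\rangle_{d\eta_\nu}.
\end{equation*}
As $\epsilon\to 0^+$ the left-hand side and the first right-hand term converge to the desired pairings by dominated convergence, since $\varphi_n^\nu$ is bounded and $|d_\nu\varphi_n^\nu|$ is bounded by Proposition \ref{prop:asd}, and $d\eta_\nu$ is finite. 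For the remainder, Proposition \ref{prop:asd} gives $|d_\nu\varphi_n^\nu(x)|\lesssim x(1-x)\lesssim\epsilon$ on $\support\eta_\epsilon'$, so $|\eta_\epsilon' d_\nu\varphi_n^\nu|\lesssim 1$ pointwise on a set of $d\eta_\nu$-measure tending to $0$; since $f\in L^1(d\eta_\nu)$, this contribution vanishes.

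The main obstacle is the integration-by-parts step: the boundary contributions are delicate because $d_\nu\varphi_n^\nu$ does not vanish at the endpoints of $(0,1)$. It is precisely the sharp boundary asymptotics $|d_\nu\varphi_n^\nu(x)|\simeq x(1-x)$ from Proposition \ref{prop:asd} that cancel the $1/\epsilon$ blow-up of $\eta_\epsilon'$ and close the argument; a secondary, routine point is checking that Remark \ref{rem:serdef} applies to $\mathfrak{H}_t^\nu$ on $L^p(d\eta_\nu)$, which follows from the uniform bounds of Lemma \ref{lem:ues}.
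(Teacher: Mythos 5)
Your proposal is correct and follows essentially the same route as the paper: reduce the identity to the coefficient relation $\langle d_\nu f, d_\nu\varphi_n^\nu\rangle_{d\eta_\nu}=(\lambda_{n,\nu}^2-\lambda_{1,\nu}^2)\langle f,\varphi_n^\nu\rangle_{d\eta_\nu}$ and prove it by a cutoff integration by parts, with the boundary decay $|d_\nu\varphi_n^\nu(x)|\lesssim x(1-x)$ from Proposition \ref{prop:asd} absorbing the blow-up of the cutoff's derivative. The paper's cutoffs $\gamma_m$ (with $|\gamma_m'|\lesssim 1/(x(1-x))$) play exactly the role of your $\eta_\epsilon$, so the two arguments coincide up to parametrization.
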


\begin{proof}
Let $f \in \W^p_{\nu}$ for some $p \ge 1$. Assuming that
\begin{equation} \label{claimq}
\big\langle f, d_{\nu}^* d_{\nu} \varphi_n^{\nu} \big\rangle_{d\eta_{\nu}}
 = \big\langle d_{\nu}f, d_{\nu}\varphi_n^{\nu}\big\rangle_{d\eta_{\nu}}, \qquad n \ge 1,
\end{equation}
we can write, see Remark \ref{rem:serdef},
\begin{align*}
d_{\nu} \mathfrak{T}_t^{\nu}f(x) & = \sum_{n=1}^{\infty} e^{-t \lambda_{n,\nu}^2} \big\langle f, \varphi_n^{\nu}
	\big\rangle_{d\eta_{\nu}}
	d_{\nu}\varphi_n^{\nu}(x) \\
& = \sum_{n=2}^{\infty} e^{-t\lambda_{n,\nu}^2} \Big\langle f, \big(\lambda_{n,\nu}^2-\lambda_{1,\nu}^2\big)^{-1}
	d_{\nu}^* d_{\nu} \varphi_{n}^{\nu} \Big\rangle_{d\eta_{\nu}} d_{\nu}\varphi_n^{\nu}(x) \\
& = \sum_{n=2}^{\infty} e^{-t \lambda_{n,\nu}^2} \Big\langle d_{\nu}f, d_{\nu}\varphi_n^{\nu}\slash
	\|d_{\nu}\varphi_n^{\nu}\|_{L^2(d\eta_{\nu})}\Big\rangle_{d\eta_{\nu}} d_{\nu}\varphi_n^{\nu}\slash
	\|d_{\nu}\varphi_n^{\nu}\|_{L^2(d\eta_{\nu})} \\
& = \mathfrak{H}_t^{\nu}(d_{\nu}f)(x), \qquad x \in (0,1), \quad t > 0.
\end{align*}
Therefore it remains to justify \eqref{claimq}.

Choose a sequence of smooth and compactly supported functions  $\{\gamma_{m} : m \ge 2\}$ on $(0,1)$ satisfying
$\support\gamma_m \subset (\frac{1}{2m},1-\frac{1}{2m})$, $\gamma_m(x) = 1$ for $x \in (\frac{1}m,1-\frac{1}m)$,
$0 \le \gamma_m(x) \le 1$ for $x \in (0,1)$, and
\begin{equation} \label{eq:dec1}
\gamma_m'(x) \lesssim \frac{1}{x(1-x)}, \qquad x \in (0,1), \quad m \ge 2.
\end{equation}
Notice that $\gamma_{m} \to 1$ pointwise as $m \to \infty$; actually, $\gamma_m(x) = 1$ for each $x\in (0,1)$ fixed and sufficiently
large $m$.

By Proposition \ref{prop:asd} and H\"older's inequality, the product $d_{\nu}f d_{\nu}\varphi_n^{\nu}$
is integrable with respect to $d\eta_{\nu}$. Using the dominated convergence theorem we get
$$
\big\langle d_{\nu}f, d_{\nu}\varphi_n^{\nu}\big\rangle_{d\eta_{\nu}}
= \lim_{m \to \infty} \int_0^1 d_{\nu}f(x) \gamma_m(x) d_{\nu}\varphi_n^{\nu}(x)\, d\eta_{\nu}(x)
= \lim_{m \to \infty} \int_0^1 f(x) d_{\nu}^* \big[ \gamma_m(x) d_{\nu}\varphi_n^{\nu}(x)\big] \, d\eta_{\nu}(x).
$$
Since for each $x \in (0,1)$
$$
d_{\nu}^* \big[ \gamma_m(x) d_{\nu}\varphi_n^{\nu}(x)\big] \to d_{\nu}^* d_{\nu} \varphi_n^{\nu}(x), \qquad m \to \infty,
$$
in order to arrive at \eqref{claimq} it suffices to justify another application of the dominated convergence theorem
by finding a suitable majorant.

We have
$$
d_{\nu}^* \big[ \gamma_m(x) d_{\nu}\varphi_n^{\nu}(x)\big] = -\gamma_m'(x) d_{\nu}\varphi_n^{\nu}(x)
	+ \gamma_m(x) d_{\nu}^* d_{\nu}\varphi_n^{\nu}(x) = -\gamma_{m}'(x) d_{\nu}\varphi_n^{\nu}(x)
		+ \big( \lambda_{n,\nu}^2-\lambda_{1,\nu}^2\big) \gamma_{m}(x) \varphi_n^{\nu}(x),
$$
so by \eqref{eq:dec1} and Proposition \ref{prop:asd} it follows that
$$
\big| d_{\nu}^* \big[ \gamma_m(x) d_{\nu}\varphi_n^{\nu}(x)\big] \big| \lesssim 1, \qquad x \in (0,1), \quad m \ge 2.
$$
Thus the desired majorant is $C |f| \in L^p(d\eta_{\nu}) \subset L^1(d\eta_{\nu})$ for a sufficiently large constant $C$.

Now \eqref{claimq} follows and this finishes the proof.
\end{proof}

We are now ready to prove Proposition \ref{prop:Sdense}.
\begin{proof}[{Proof of Proposition \ref{prop:Sdense}}]
Let $f \in \W^p_{\nu}$ for some $1 \le p < \infty$. Using Lemma \ref{lem:derT} and then Lemma \ref{lem:maxpc} we infer that
$\mathfrak{T}_t^{\nu}f \in \W^{p}_{\nu}$ for $t > 0$, and
$$
\lim_{t \to 0^+} \big\| f - \mathfrak{T}_t^{\nu}f \big\|_{\W^p_{\nu}} = 0.
$$
Thus the desired conclusion will follow once we verify that, given any $t_0 > 0$ fixed, partial Fourier-Bessel
sums of $\mathfrak{T}_{t_0}^{\nu}f$ (that are elements of $S_{\nu}$) converge to $\mathfrak{T}_{t_0}^{\nu}f$ in $\W^p_{\nu}$.

Define
$$
\mathfrak{T}_{t,N}^{\nu}f(x) := \sum_{n=1}^N e^{-t \lambda_{n,\nu}^2} \langle f, \varphi_n^{\nu} \rangle_{d\eta_{\nu}}
	\varphi_n^{\nu}(x), \qquad x \in (0,1), \quad t > 0.
$$
By \eqref{claimq} one has $d_{\nu} \mathfrak{T}_{t,N}^{\nu}f(x) = \mathfrak{H}_{t,N}^{\nu}(d_{\nu}f)(x)$, where
$\mathfrak{H}_{t,N}^{\nu}f$ is the $N$th partial sum of $\mathfrak{H}_t^{\nu}f$. Then using H\"older's inequality
and Lemma \ref{lem:derT} we can write
\begin{align*}
\big\| \mathfrak{T}_{t_0,N}^{\nu}f - \mathfrak{T}_{t_0}^{\nu}f \big\|_{\W^p_{\nu}} & =
 \big\| \mathfrak{T}_{t_0,N}^{\nu}f - \mathfrak{T}_{t_0}^{\nu}f \big\|_{L^p(d\eta_{\nu})} +
	\big\| \mathfrak{H}_{t_0,N}^{\nu}(d_{\nu}f) - \mathfrak{H}_{t_0}^{\nu}(d_{\nu}f) \big\|_{L^p(d\eta_{\nu})} \\
& \le \|f\|_{L^p(d\eta_{\nu})} \Bigg( \sum_{n=N+1}^{\infty} e^{-t_0 \lambda_{n,\nu}^2} \|\varphi_n^{\nu}\|_{L^p(d\eta_{\nu})}
	 \|\varphi_n^{\nu}\|_{L^{p'}(d\eta_{\nu})} \\
	& \qquad \qquad \qquad+ \sum_{n=N+1}^{\infty} e^{-t_0 \lambda_{n,\nu}^2}
		\frac{\|d_{\nu}\varphi_n^{\nu}\|_{L^p(d\eta_{\nu})}\|d_{\nu}\varphi_n^{\nu}\|_{L^{p'}(d\eta_{\nu})}}
			{\|d_{\nu}\varphi_n^{\nu}\|^2_{L^2(d\eta_{\nu})}} \Bigg).
\end{align*}
In view of Lemma \ref{lem:ues}, the last expression is controlled, up to a multiplicative constant, by
$$
\|f\|_{L^p(d\eta_{\nu})} \sum_{n=N+1}^{\infty} e^{-t_0 \lambda_{n,\nu}^2} n^{2\nu+9},
$$
which clearly tends to $0$ as $N \to \infty$, cf.\ \eqref{eigvas}. It follows that
$\mathfrak{T}_{t_0,N}^{\nu}f \to \mathfrak{T}_{t_0}^{\nu}f$ in $\W^p_{\nu}$, as needed.
\end{proof}

Next, we will introduce potential spaces associated with $\mathfrak{L}_{\nu}$. Define, initially for $f \in S_{\nu}$,
\begin{equation} \label{Lpot}
\mathfrak{L}_{\nu}^{-\sigma}f = \sum_{n=1}^{\infty} \lambda_{n,\nu}^{-2\sigma} \langle f, \varphi_n^{\nu} \rangle_{d\eta_{\nu}}
	\varphi_n^{\nu}.
\end{equation}
When $\sigma < 0$, this is the so-called fractional derivative operator, while for $\sigma > 0$ we obtain
the fractional integral operator. In the latter case, \eqref{Lpot} makes sense for $f \in L^2(d\eta_{\nu})$ and
defines a bounded linear operator there. Formally $\mathfrak{L}_{\nu}^{-\sigma}$, $\sigma > 0$, can be written as
an integral operator which we denote $\mathfrak{I}_{\sigma}^{\nu}$,
$$
\mathfrak{I}_{\sigma}^{\nu}f(x) = \int_0^1 \mathfrak{K}_{\sigma}^{\nu}(x,y) f(y)\, d\eta_{\nu}(y),
$$
where the potential kernel is given via the heat kernel,
$$
\mathfrak{K}_{\sigma}^{\nu}(x,y) = \frac{1}{\Gamma(\sigma)} \int_0^{\infty} \mathfrak{G}_t^{\nu}(x,y) t^{\sigma-1}\, dt;
$$
see e.g.\ \cite[Section 1]{NoRo3}. We call $\mathfrak{I}_{\sigma}^{\nu}$ the potential operator.
Since $\mathfrak{I}_{\sigma}^{\nu}$ is bounded on $L^2(d\eta_{\nu})$ (see Proposition \ref{prop:Lppot} below), standard
arguments show (cf.\ e.g.\ \cite[Section 1]{NoRo3} and the reference given there) that for each $\sigma > 0$,
$\mathfrak{L}_{\nu}^{-\sigma}$ and $\mathfrak{I}_{\sigma}^{\nu}$ coincide on $L^2(d\eta_{\nu})$. Thus
$\mathfrak{I}_{\sigma}^{\nu}$ provides a natural extension of $\mathfrak{L}_{\nu}^{-\sigma}$ to more general spaces.
The natural domain of $\mathfrak{I}_{\sigma}^{\nu}$ consists of all functions for which the defining integral converges
for a.e.\ $x \in (0,1)$.

\begin{proposition} \label{prop:Lppot}
Let $\nu > -1$, $\sigma > 0$ and $1 \le p \le \infty$. The space $L^p(d\eta_{\nu})$ is contained in the natural domain of
$\mathfrak{I}_{\sigma}^{\nu}$ and, moreover, $\mathfrak{I}_{\sigma}^{\nu}$ is bounded on $L^p(d\eta_{\nu})$.
\end{proposition}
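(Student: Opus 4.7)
The plan is to apply Schur's test to the integral kernel $\mathfrak{K}_{\sigma}^{\nu}$. By symmetry of the heat kernel $\mathfrak{G}_t^{\nu}(x,y)$ (which is manifest from its spectral representation involving $\varphi_n^{\nu}(x)\varphi_n^{\nu}(y)$), the potential kernel $\mathfrak{K}_{\sigma}^{\nu}(x,y)$ is symmetric as well. Consequently, it suffices to establish the single uniform bound
$$
\sup_{x \in (0,1)} \int_0^1 \mathfrak{K}_{\sigma}^{\nu}(x,y) \, d\eta_{\nu}(y) < \infty,
$$
from which Schur's test will yield boundedness on $L^p(d\eta_{\nu})$ for every $1 \le p \le \infty$.

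First I would control $\int_0^1 \mathfrak{G}_t^{\nu}(x,y)\, d\eta_{\nu}(y)$ uniformly in $x$, splitting the $t$-range at some fixed $T>0$. For $0 < t \le T$, the same sub-Markovian contraction already invoked in the proof of Lemma \ref{lem:maxpc} gives
$$
\int_0^1 \mathfrak{G}_t^{\nu}(x,y) \, d\eta_{\nu}(y) = \mathfrak{T}_t^{\nu} \boldsymbol{1}(x) \le \mathfrak{T}_t^{\nu,M}\boldsymbol{1}(x) = 1.
$$
For $t \ge T$, the large-time half of Proposition \ref{prop:heess} gives $\mathfrak{G}_t^{\nu}(x,y) \lesssim e^{-\lambda_{1,\nu}^2 t}$ uniformly in $x,y$, and since $\eta_{\nu}$ is a finite measure on $(0,1)$, integrating yields $\int_0^1 \mathfrak{G}_t^{\nu}(x,y)\,d\eta_{\nu}(y) \lesssim e^{-\lambda_{1,\nu}^2 t}$ uniformly in $x$.

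Then, by Tonelli (all integrands being nonnegative),
$$
\int_0^1 \mathfrak{K}_{\sigma}^{\nu}(x,y)\, d\eta_{\nu}(y) = \frac{1}{\Gamma(\sigma)} \int_0^{\infty} t^{\sigma-1} \int_0^1 \mathfrak{G}_t^{\nu}(x,y)\, d\eta_{\nu}(y)\, dt \lesssim \int_0^T t^{\sigma-1}\, dt + \int_T^{\infty} t^{\sigma-1} e^{-\lambda_{1,\nu}^2 t}\, dt,
$$
and both terms on the right are finite: the first because $\sigma > 0$, the second because of the exponential decay (this is precisely where the hypothesis $\sigma>0$ and the large-time estimate enter). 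Schur's test now delivers the $L^p(d\eta_{\nu})$-boundedness.

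Finally, the inclusion $L^p(d\eta_{\nu}) \subset \mathrm{dom}\,\mathfrak{I}_{\sigma}^{\nu}$ is immediate. For $f \in L^p(d\eta_{\nu})$ with $p \ge 1$, since $\eta_{\nu}$ is finite we have $|f| \in L^1(d\eta_{\nu})$, and Tonelli combined with the uniform bound established above yields
$$
\int_0^1 \mathfrak{I}_{\sigma}^{\nu}(|f|)(x)\, d\eta_{\nu}(x) = \int_0^1 |f(y)| \int_0^1 \mathfrak{K}_{\sigma}^{\nu}(x,y)\, d\eta_{\nu}(x) \, d\eta_{\nu}(y) \lesssim \|f\|_{L^1(d\eta_{\nu})} < \infty,
$$
so $\mathfrak{I}_{\sigma}^{\nu}(|f|)(x)$ is finite for a.e.\ $x \in (0,1)$. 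There is no serious obstacle here; the only point requiring mild attention is the integrability at $t=\infty$ of $t^{\sigma-1}$ against the heat kernel, which is secured by the spectral gap encoded in the large-time estimate of Proposition \ref{prop:heess}.
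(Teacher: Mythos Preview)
Your proof is correct and takes a genuinely different, more elementary route than the paper's. The paper proceeds by relating $\mathfrak{K}_{\sigma}^{\nu}$ to the natural measure Fourier--Bessel potential kernel via \eqref{GK10}, then invoking sharp pointwise estimates for that kernel and for the Jacobi potential kernel from \cite{NoRo3}, and finally transferring the known $L^p$ mapping properties from the Jacobi setting (\cite[Theorem~2.3]{NoRo3}). Your argument bypasses all of this external machinery: Schur's test together with the sub-Markov property $\mathfrak{T}_t^{\nu}\boldsymbol{1}\le 1$ and the large-time exponential decay from Proposition~\ref{prop:heess} suffices. In fact your split at $T$ is not even needed, since $\mathfrak{T}_t^{\nu}\boldsymbol{1}(x)=e^{-\lambda_{1,\nu}^2 t}$ exactly (as $\boldsymbol{1}=\varphi_1^{\nu}$), giving directly $\int_0^1 \mathfrak{K}_{\sigma}^{\nu}(x,y)\,d\eta_{\nu}(y)=\lambda_{1,\nu}^{-2\sigma}$ for every $x$.

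What each approach buys: yours is self-contained and gives the cleanest possible Schur constant. The paper's route, while heavier, delivers sharp two-sided pointwise bounds on $\mathfrak{K}_{\sigma}^{\nu}$ itself, which would be needed for finer results such as off-diagonal $L^p$--$L^q$ estimates or weak-type endpoint bounds that a bare Schur test cannot see.
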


\begin{proof}
In view of \eqref{GK10},
$$
\mathfrak{K}_{\sigma}^{\nu}(x,y) = \frac{1}{\phi_1^{\nu}(x)\phi_1^{\nu}(y)} \mathcal{K}_{\sigma}^{\nu}(x,y),
$$
where $\mathcal{K}_{\sigma}^{\nu}(x,y)$ is the potential kernel associated with the natural measure Fourier-Bessel setting.
Taking into account the above, the bounds $\phi_1^{\nu}(x) \simeq 1-x$, $x \in (0,1)$, sharp estimates for
$\mathcal{K}_{\sigma}^{\nu}(x,y)$ from \cite[Theorem 2.6]{NoRo3} and finally sharp estimates for the Jacobi
potential kernel from \cite[Theorem 2.2]{NoRo3}, we conclude that $\mathfrak{K}_{\sigma}^{\nu}(x,y)$ has exactly
the same sharp bounds as the potential kernel in the Jacobi trigonometric polynomial setting with natural measure,
scaled to $(0,1)$, with parameters $\alpha = \nu$ and $\beta = 1/2$. Furthermore, the corresponding measures are
comparable. Thus $\mathfrak{I}_{\sigma}^{\nu}$ has the same $L^p$ mapping properties as the Jacobi potential operator
in the just mentioned circumstances. Then it follows from \cite[Theorem 2.3]{NoRo3} and comments preceding it that,
in particular, $L^p(d\eta_{\nu})$ lies in the natural domain of $\mathfrak{I}_{\sigma}^{\nu}$, and
$\mathfrak{I}_{\sigma}^{\nu}$ is bounded on $L^p(d\eta_{\nu})$, $1 \le p \le \infty$.
\end{proof}

\begin{proposition} \label{prop:1-1}
Let $\nu > -1$, $\sigma > 0$ and $1 < p < \infty$. Then $\mathfrak{I}_{\sigma}^{\nu}$ is injective on $L^p(d\eta_{\nu})$.
\end{proposition}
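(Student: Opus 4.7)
The plan is to exploit the self-adjointness of $\mathfrak{I}_{\sigma}^{\nu}$ (viewed via the symmetric kernel) together with the density of $S_{\nu} = \lin\{\varphi_n^{\nu}: n \ge 1\}$ in $L^{p'}(d\eta_{\nu})$, which was already established in the proof of Theorem \ref{thm:Riesz} as Condition (T3). First I would fix $f \in L^p(d\eta_{\nu})$ with $\mathfrak{I}_{\sigma}^{\nu}f = 0$ a.e.\ and observe that $d\eta_{\nu}$ is a finite measure on $(0,1)$: its density $x^{2\nu+1}[\phi_1^{\nu}(x)]^2$ is integrable for $\nu > -1$ since $\phi_1^{\nu}$ is bounded. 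Consequently $L^p(d\eta_{\nu}) \subset L^1(d\eta_{\nu})$, and Proposition \ref{prop:Lppot} gives $\mathfrak{I}_{\sigma}^{\nu}|f| \in L^1(d\eta_{\nu})$.

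Next I would test the vanishing of $\mathfrak{I}_{\sigma}^{\nu}f$ against each $\varphi_n^{\nu}$. Since $\varphi_n^{\nu}$ is bounded (for each fixed $n$, by Lemma \ref{lem:ues}) and $\mathfrak{I}_{\sigma}^{\nu}|f| \in L^1(d\eta_{\nu})$, the double integral
$$
\int_0^1 \int_0^1 \mathfrak{K}_{\sigma}^{\nu}(x,y) |\varphi_n^{\nu}(x)| |f(y)|\, d\eta_{\nu}(x)\, d\eta_{\nu}(y)
$$
is finite, so Fubini's theorem applies. Using the symmetry $\mathfrak{K}_{\sigma}^{\nu}(x,y) = \mathfrak{K}_{\sigma}^{\nu}(y,x)$, which is inherited from symmetry of the heat kernel $\mathfrak{G}_t^{\nu}(x,y)$, and the fact that $\mathfrak{I}_{\sigma}^{\nu}$ coincides with the spectrally defined $\mathfrak{L}_{\nu}^{-\sigma}$ on $L^2(d\eta_{\nu}) \ni \varphi_n^{\nu}$ (as noted just before Proposition \ref{prop:Lppot}), this yields
$$
0 = \big\langle \mathfrak{I}_{\sigma}^{\nu}f, \varphi_n^{\nu}\big\rangle_{d\eta_{\nu}}
= \big\langle f, \mathfrak{I}_{\sigma}^{\nu}\varphi_n^{\nu}\big\rangle_{d\eta_{\nu}}
= \lambda_{n,\nu}^{-2\sigma} \big\langle f, \varphi_n^{\nu}\big\rangle_{d\eta_{\nu}}.
$$

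Since $\lambda_{n,\nu} > 0$, it follows that $\langle f, \varphi_n^{\nu}\rangle_{d\eta_{\nu}} = 0$ for every $n \ge 1$, so $f$ annihilates the whole subspace $S_{\nu}$. Because $1 < p < \infty$ and $f \in L^p(d\eta_{\nu})$, the function $f$ represents a continuous linear functional on $L^{p'}(d\eta_{\nu})$ via the pairing $\langle \cdot,\cdot\rangle_{d\eta_{\nu}}$. The density of $S_{\nu}$ in $L^{p'}(d\eta_{\nu})$ established in the proof of Theorem \ref{thm:Riesz} then forces $f = 0$ in $L^p(d\eta_{\nu})$, proving injectivity.

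The only non-routine step is the Fubini justification above, but this becomes straightforward once one notices that $d\eta_{\nu}$ is finite and invokes the $L^1$-boundedness of $\mathfrak{I}_{\sigma}^{\nu}$ supplied by Proposition \ref{prop:Lppot}; the remainder is a clean duality argument using previously established density.
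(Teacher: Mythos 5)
Your proof is correct and follows essentially the same strategy as the paper: reduce injectivity to the identity $\langle \mathfrak{I}_{\sigma}^{\nu}f,\varphi_n^{\nu}\rangle_{d\eta_{\nu}} = \lambda_{n,\nu}^{-2\sigma}\langle f,\varphi_n^{\nu}\rangle_{d\eta_{\nu}}$ for all $n$, and then conclude $f=0$ from the density of $S_{\nu}$ in the dual space $L^{p'}(d\eta_{\nu})$. The only difference lies in how that identity is justified: you compute it directly via Fubini's theorem, using positivity and symmetry of the potential kernel together with the $L^1$-boundedness from Proposition \ref{prop:Lppot}, whereas the paper notes that both sides define bounded linear functionals of $f$ on $L^p(d\eta_{\nu})$ which agree on the dense subspace $S_{\nu}$; both justifications are valid.
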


\begin{proof}
The arguments are standard, see e.g.\ \cite[Proposition 2.4]{La1} and its proof.
Let $f \in L^p(d\eta_{\nu})$ for some $1 < p < \infty$.
First, we note that
\begin{equation} \label{fls}
\big\langle \mathfrak{I}_{\sigma}^{\nu}f,\varphi_n^{\nu}\big\rangle_{d\eta_{\nu}}
= \lambda_{n,\nu}^{-2\sigma} \langle f, \varphi_n^{\nu}\rangle_{d\eta_{\nu}}, \qquad n \ge 1.
\end{equation}
This is clear when $f \in S_{\nu}$. Further, by H\"older's inequality and Proposition \ref{prop:Lppot} the functionals
defined by the left-hand and right-hand sides of \eqref{fls} are bounded from $L^p(d\eta_{\nu})$ to $\mathbb{C}$.
Since $S_{\nu}$ is dense in $L^p(d\eta_{\nu})$, we see that \eqref{fls} holds for $f \in L^p(d\eta_{\nu})$.

Next, we show the following claim:
if $\langle f, \varphi_n^{\nu}\rangle_{d\eta_{\nu}} = 0$ for all $n \ge 1$, then $f \equiv 0$.
To justify this observe that the antecedent of the conditional means that $\langle f, g\rangle_{d\eta_{\nu}} = 0$
for all $g \in S_{\nu}$, and $S_{\nu}$ is dense in the dual space $(L^p(d\eta_{\nu}))^* = L^{p'}(d\eta_{\nu})$,
cf.\ Remark \ref{rem:densC0}.

Finally, taking into account \eqref{fls} and the claim, we infer that $\mathfrak{I}_{\sigma}^{\nu}f \equiv 0$ implies
$f \equiv 0$. It follows that $\mathfrak{I}_{\sigma}^{\nu}$ is one-to-one.
\end{proof}

We now define the Fourier-Bessel potential spaces $\L_{\nu}^{p,\sigma}$.
Keeping in mind Propositions \ref{prop:Lppot} and \ref{prop:1-1}, for $\nu > -1$, $\sigma > 0$ and $1 < p < \infty$ we set
$$
\L_{\nu}^{p,\sigma} := \mathfrak{I}_{\sigma/2}^{\nu} \big( L^p(d\eta_{\nu}) \big)
$$
and equip this space with a norm via the formula
$$
\|f\|_{\L_{\nu}^{p,\sigma}} := \|g\|_{L^p(d\eta_{\nu})}, \qquad f = \mathfrak{I}_{\sigma /2}^{\nu}g, \qquad g \in L^p(d\eta_{\nu}).
$$
It is straightforward to check that $(\L_{\nu}^{p,\sigma}, \|\cdot\|_{\L_{\nu}^{p,\sigma}})$ is a Banach space.
\begin{proposition} \label{prop:Ldense}
Let $\nu > -1$ and $\sigma > 0$. For each $1 < p < \infty$, $S_{\nu}$ is a dense subspace of $\L_{\nu}^{p,\sigma}$.
\end{proposition}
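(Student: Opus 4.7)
The plan is to unwind the definition of the potential space and reduce the density claim in $\L_\nu^{p,\sigma}$ to the already established density of $S_\nu$ in $L^p(d\eta_\nu)$ (Remark \ref{rem:densC0}). Given $f \in \L_\nu^{p,\sigma}$, by definition there is a unique $g \in L^p(d\eta_\nu)$ with $f = \mathfrak{I}_{\sigma/2}^{\nu}g$, and by Proposition \ref{prop:1-1} this $g$ is characterized by $\|f\|_{\L_\nu^{p,\sigma}} = \|g\|_{L^p(d\eta_\nu)}$. So it suffices to find $\{g_n\} \subset L^p(d\eta_\nu)$ with $g_n \to g$ in $L^p(d\eta_\nu)$ and $\mathfrak{I}_{\sigma/2}^\nu g_n \in S_\nu$.

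First I would choose $\{g_n\} \subset S_\nu$ with $g_n \to g$ in $L^p(d\eta_\nu)$, which is possible by Remark \ref{rem:densC0}. Then I would verify that $\mathfrak{I}_{\sigma/2}^\nu$ preserves $S_\nu$: on $L^2(d\eta_\nu) \supset S_\nu$ the integral operator $\mathfrak{I}_{\sigma/2}^\nu$ coincides with the spectrally defined $\mathfrak{L}_\nu^{-\sigma/2}$ (as noted preceding Proposition \ref{prop:Lppot}), so for a finite combination $g_n = \sum_{k=1}^{N_n} c_k \varphi_k^\nu$ one has
$$
\mathfrak{I}_{\sigma/2}^\nu g_n = \sum_{k=1}^{N_n} c_k \lambda_{k,\nu}^{-\sigma} \varphi_k^\nu \in S_\nu.
$$

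Finally, by linearity of $\mathfrak{I}_{\sigma/2}^\nu$ and the very definition of the potential norm,
$$
\bigl\|f - \mathfrak{I}_{\sigma/2}^\nu g_n\bigr\|_{\L_\nu^{p,\sigma}} = \bigl\|\mathfrak{I}_{\sigma/2}^\nu(g - g_n)\bigr\|_{\L_\nu^{p,\sigma}} = \|g - g_n\|_{L^p(d\eta_\nu)} \longrightarrow 0,
$$
which yields the desired density. There is no real obstacle here; the entire argument is a soft consequence of the fact that $\mathfrak{I}_{\sigma/2}^\nu$ is, by construction, an isometric isomorphism $L^p(d\eta_\nu) \to \L_\nu^{p,\sigma}$ that maps the dense subspace $S_\nu \subset L^p(d\eta_\nu)$ into $S_\nu$. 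The only point requiring minimal care is confirming the identification of $\mathfrak{I}_{\sigma/2}^\nu$ with $\mathfrak{L}_\nu^{-\sigma/2}$ on $S_\nu$, which has already been recorded in the paper.
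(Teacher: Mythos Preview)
Your proposal is correct and takes essentially the same approach as the paper. The paper's proof is a one-sentence summary of exactly the three facts you unwind: $\mathfrak{I}_{\sigma/2}^{\nu}$ is bounded and injective on $L^p(d\eta_{\nu})$, $S_{\nu}$ is dense in $L^p(d\eta_{\nu})$, and $S_{\nu}$ coincides with its image under $\mathfrak{I}_{\sigma/2}^{\nu}$; your explicit computation $\mathfrak{I}_{\sigma/2}^{\nu}\big(\sum c_k\varphi_k^{\nu}\big)=\sum c_k\lambda_{k,\nu}^{-\sigma}\varphi_k^{\nu}$ in fact shows this map is a bijection on $S_{\nu}$ (not merely ``into''), which also confirms the inclusion $S_{\nu}\subset\L_{\nu}^{p,\sigma}$ needed for the statement.
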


\begin{proof}
Combine the following facts: $\mathfrak{I}_{\sigma/2}^{\nu}$ is bounded and one-to-one on $L^p(d\eta_{\nu})$, $S_{\nu}$ is
a dense subspace of $L^p(d\eta_{\nu})$, and $S_{\nu}$ coincides with its image under the action of $\mathfrak{I}_{\sigma/2}^{\nu}$.
\end{proof}

\begin{remark}
The potential spaces $\L_{\nu}^{p,\sigma}$ possess various expected monotonicity, structural and embedding properties,
which can be established by standard methods, see e.g.\ \cite[Proposition 5.1]{La1} and \cite[Section~3]{La2}.
However, we shall not pursue these matters here to avoid confusion with the main line of the paper.
\end{remark}

We are now ready to formulate the analogue of the classical result of A.P.\ Calder\'on \cite{C}, which is the main result
of this section. This result is a crucial point when it comes to consistency of the theory of Fourier-Bessel Sobolev spaces with
the classical one.
\begin{theorem} \label{thm:isoCal}
Let $\nu \ge -1/2$ and $1 < p < \infty$. Then
$$
\W_{\nu}^p = \L_{\nu}^{p,1}
$$
in the sense of isomorphism of Banach spaces.
\end{theorem}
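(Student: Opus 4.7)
The overall strategy rests on the spectral operator identity $\mathfrak{L}_\nu^{1/2} = \mathfrak{R}_\nu^* d_\nu + \lambda_{1,\nu}^2 \mathfrak{L}_\nu^{-1/2}$ (valid on $S_\nu$, and formally a consequence of $d_\nu^* d_\nu = \mathfrak{L}_\nu - \lambda_{1,\nu}^2$ combined with $\mathfrak{R}_\nu = d_\nu \mathfrak{L}_\nu^{-1/2}$), which translates the $L^p$-boundedness of $\mathfrak{R}_\nu$ from Theorem \ref{thm:Riesz} into continuous inclusions in both directions. For $\L_\nu^{p,1} \hookrightarrow \W_\nu^p$, given $f = \mathfrak{I}_{1/2}^\nu g$ with $g \in L^p(d\eta_\nu)$, the bound $\|f\|_{L^p(d\eta_\nu)} \lesssim \|g\|_{L^p(d\eta_\nu)}$ is immediate from Proposition \ref{prop:Lppot}. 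I would then verify the identity $d_\nu \mathfrak{I}_{1/2}^\nu g = \mathfrak{R}_\nu g$ first pointwise for $g \in S_\nu$, where both sides reduce to the finite combination $\sum_n \lambda_{n,\nu}^{-1}\langle g, \varphi_n^\nu\rangle_{d\eta_\nu} d_\nu\varphi_n^\nu$, and then extend it distributionally to arbitrary $g \in L^p(d\eta_\nu)$ using density of $S_\nu$ in $L^p(d\eta_\nu)$ (Remark \ref{rem:densC0}) together with the $L^p$-continuity of $\mathfrak{I}_{1/2}^\nu$ and $\mathfrak{R}_\nu$. Theorem \ref{thm:Riesz} then yields $\|d_\nu f\|_{L^p(d\eta_\nu)} = \|\mathfrak{R}_\nu g\|_{L^p(d\eta_\nu)} \lesssim \|g\|_{L^p(d\eta_\nu)}$.

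For the reverse inclusion $\W_\nu^p \hookrightarrow \L_\nu^{p,1}$, given $f \in \W_\nu^p$ I would set
\[
g := \mathfrak{R}_\nu^*(d_\nu f) + \lambda_{1,\nu}^2 \mathfrak{I}_{1/2}^\nu f,
\]
where $\mathfrak{R}_\nu^*$ is bounded on $L^p(d\eta_\nu)$ by duality from Theorem \ref{thm:Riesz} applied on $L^{p'}(d\eta_\nu)$. Hence $g \in L^p(d\eta_\nu)$ with $\|g\|_{L^p(d\eta_\nu)} \lesssim \|f\|_{\W_\nu^p}$, and the task reduces to showing $f = \mathfrak{I}_{1/2}^\nu g$. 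For $f = \varphi_n^\nu$ this is a direct spectral computation: the series defining $\mathfrak{R}_\nu$ gives $\mathfrak{R}_\nu^* d_\nu \varphi_n^\nu = \lambda_{n,\nu}^{-1}(\lambda_{n,\nu}^2-\lambda_{1,\nu}^2)\varphi_n^\nu$, whence $g = \lambda_{n,\nu}\varphi_n^\nu$ and $\mathfrak{I}_{1/2}^\nu g = \varphi_n^\nu = f$. By linearity the identity holds on $S_\nu$, and since the map $f \mapsto f - \mathfrak{I}_{1/2}^\nu\bigl(\mathfrak{R}_\nu^*(d_\nu f) + \lambda_{1,\nu}^2 \mathfrak{I}_{1/2}^\nu f\bigr)$ is bounded from $\W_\nu^p$ into $L^p(d\eta_\nu)$ and vanishes on the dense subspace $S_\nu$ (Proposition \ref{prop:Sdense}), it vanishes identically.

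The most delicate step is the density extension — upgrading the spectral identities, which are immediate on $S_\nu$, to their weak versions on all of $L^p(d\eta_\nu)$ and $\W_\nu^p$ with $d_\nu$ treated distributionally. This is where Proposition \ref{prop:Sdense} is indispensable and where one must check, in the forward direction, that simultaneous $L^p$-convergence of $\mathfrak{I}_{1/2}^\nu g_k$ and of $\mathfrak{R}_\nu g_k$ identifies the distributional $d_\nu$ of the limit with $\mathfrak{R}_\nu g$. The restriction $\nu \ge -1/2$ enters only via Theorem \ref{thm:Riesz}; everything else is available in the full range $\nu > -1$. Combining both inclusions gives $\|f\|_{\W_\nu^p} \simeq \|f\|_{\L_\nu^{p,1}}$, the asserted Banach space isomorphism.
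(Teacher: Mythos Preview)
Your proposal is correct and uses essentially the same ingredients and spectral identities as the paper's proof: the decomposition $\mathfrak{L}_\nu^{1/2} = \mathfrak{R}_\nu^* d_\nu + \lambda_{1,\nu}^2\,\mathfrak{I}_{1/2}^\nu$ on $S_\nu$, the $L^p$-boundedness of $\mathfrak{R}_\nu$ (Theorem~\ref{thm:Riesz}), of $\mathfrak{I}_{1/2}^\nu$ (Proposition~\ref{prop:Lppot}), and the density results (Propositions~\ref{prop:Sdense} and~\ref{prop:Ldense}). The only organizational difference is that the paper works entirely on the common dense subspace $S_\nu$, proving $\|f\|_{\W_\nu^p}\simeq\|f\|_{\L_\nu^{p,1}}$ there and then invoking density once, whereas you extend the operator identities $d_\nu\mathfrak{I}_{1/2}^\nu g=\mathfrak{R}_\nu g$ and $f=\mathfrak{I}_{1/2}^\nu g$ to general $g\in L^p(d\eta_\nu)$ and $f\in\W_\nu^p$ before comparing norms; this is slightly more laborious but equally valid.
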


\begin{proof}
Since $S_{\nu}$ is dense both in $\W_{\nu}^p$ and in $\L_{\nu}^{p,1}$, it suffices to show that the norms are comparable
on~$S_{\nu}$,
$$
\|f\|_{\W_{\nu}^p} \simeq \|f\|_{\L_{\nu}^{p,1}}, \qquad f \in S_{\nu}.
$$
To this end, we always assume that $f \in S_{\nu}$.

Observe that the following identities hold:
$$
 f = \mathfrak{L}_{\nu}^{-1/2} \mathfrak{L}_{\nu}^{1/2}f = \mathfrak{I}_{1/2}^{\nu} \mathfrak{L}_{\nu}^{1/2}f, \qquad
d_{\nu}f = \mathfrak{R}_{\nu} \mathfrak{L}_{\nu}^{1/2}f.
$$
Using this together with the $L^p$-boundedness of $\mathfrak{R}_{\nu}$ (Theorem \ref{thm:Riesz}) and $\mathfrak{I}_{1/2}^{\nu}$
(Proposition \ref{prop:Lppot}) we can write, uniformly in $f \in S_{\nu}$,
\begin{align*}
\|f\|_{\W_{\nu}^p} & = \|f\|_{L^p(d\eta_{\nu})} + \|d_{\nu}f\|_{L^p(d\eta_{\nu})}
	= \big\| \mathfrak{I}_{1/2}^{\nu} \mathfrak{L}_{\nu}^{1/2}f\big\|_{L^p(d\eta_{\nu})}
		+ \big\|\mathfrak{R}_{\nu} \mathfrak{L}_{\nu}^{1/2}f\big\|_{L^p(d\eta_{\nu})} \\
		& \lesssim \big\| \mathfrak{L}_{\nu}^{1/2}f\big\|_{L^p(d\eta_{\nu})} = \|f\|_{\L_{\nu}^{p,1}}.
\end{align*}

On the other hand, in view of the identities
$$
\mathfrak{L}_{\nu}^{1/2}f = \mathfrak{L}_{\nu}^{-1/2}\mathfrak{L}_{\nu} f
	= \mathfrak{L}_{\nu}^{-1/2} \big( \lambda_{1,\nu}^2 + d_{\nu}^* d_{\nu}\big)f
	= \lambda_{1,\nu}^2 \mathfrak{L}_{\nu}^{-1/2}f + \mathfrak{R}_{\nu}^* d_{\nu}f,
$$
with $\mathfrak{R}_{\nu}^{*} = \mathfrak{L}_{\nu}^{-1/2}d_{\nu}^*$ being the adjoint of $\mathfrak{R}_{\nu}$, we have
\begin{align*}
\|f\|_{\L_{\nu}^{p,1}} & = \big\| \mathfrak{L}_{\nu}^{1/2}f \big\|_{L^p(d\eta_{\nu})}
	\le \lambda_{1,\nu}^2 \big\| \mathfrak{I}_{1/2}^{\nu}f \big\|_{L^p(d\eta_{\nu})}
		+ \big\| \mathfrak{R}_{\nu}^* d_{\nu}f\big\|_{L^p(d\eta_{\nu})} \\
	& \lesssim \|f\|_{L^p(d\eta_{\nu})} + \|d_{\nu}f\|_{L^p(d\eta_{\nu})} = \|f\|_{\W_{\nu}^p},
\end{align*}
uniformly in $f \in S_{\nu}$.
Here to write the last estimate we used again the $L^p$-boundedness of $\mathfrak{I}_{1/2}^{\nu}$ and
$L^{p'}$-boundedness of $\mathfrak{R}_{\nu}$;
notice that by duality the latter implies $L^p$-boundedness of $\mathfrak{R}_{\nu}^{*}$.

The conclusion follows.
\end{proof}

\begin{remark}
The slight restriction on the parameter in Theorem \ref{thm:isoCal} is inherited from Theorem \ref{thm:Riesz}, since
$L^p$-boundedness of the Riesz transform is an essential ingredient of the proof. Nevertheless, it is natural to conjecture that
Theorem \ref{thm:isoCal} holds for all $\nu > -1$, see Remark \ref{rem:Riesz}.
\end{remark}

\begin{remark}
In the probabilistic setting of $\mathfrak{L}^M_{\nu}$ zero is the bottom eigenvalue and the negative powers/Riesz potentials
$(\mathfrak{L}_{\nu}^M)^{-\sigma}$ are not well defined even on $S_{\nu}$.
Thus the appropriate potential operators underlying the definition of potential spaces associated with $\mathfrak{L}_{\nu}^M$
are the Bessel type potentials $(c + \mathfrak{L}_{\nu}^M)^{-\sigma}$, $c > 0$ fixed. Here it is natural and convenient
to choose $c = \lambda_{1,\nu}^2$. Since $\lambda_{1,\nu}^2 + \mathfrak{L}_{\nu}^M = \mathfrak{L}_{\nu}$, we see
that potential spaces associated with $\mathfrak{L}_{\nu}^M$ are the same as for $\mathfrak{L}_{\nu}$.
Choosing some other $c$, in particular $c=1$,
does not lead to any qualitative change, only to some additional and mostly parallel analysis
related to the Laplacian $\mathfrak{L}_{\nu} - \lambda_{1,\nu}^2 + c$ in place of $\mathfrak{L}_{\nu}$
in Sections \ref{sec:riesz} and \ref{sec:Sob}. We omit the details.
\end{remark}

The problem of defining higher order Sobolev spaces (that includes defining higher order Fourier-Bessel derivatives)
so that they are isomorphic with the corresponding Fourier-Bessel potential spaces remains open.

\subsection{Comments on natural and Lebesgue measure Fourier-Bessel settings} \label{ssec:sob_nl}

Recall from Section~\ref{sec:der} that the derivatives used commonly in the literature in the natural and Lebesgue measures
Fourier-Bessel settings are, respectively,
$$
\mathfrak{d}_{\nu} = \frac{d}{dx} \qquad \textrm{and} \qquad \mathfrak{D}_{\nu} = \frac{d}{dx} - \frac{\nu+1/2}{x}.
$$
On the other hand, our derivatives introduced in this paper are, respectively,
$$
\delta_{\nu} = \frac{d}{dx} + R^{\nu}(x) \qquad \textrm{and} \qquad \mathbb{D}_{\nu} = \frac{d}{dx} - \frac{\nu+1/2}x + R^{\nu}(x).
$$
Note that $\delta_{\nu}$ and $\mathbb{D}_{\nu}$ are compatible with $d_{\nu}$ in the essential measure Fourier-Bessel context,
see Section~\ref{sec:der}, which is not the case of $\mathfrak{d}_{\nu}$ and $\mathfrak{D}_{\nu}$.
We claim that $\mathfrak{d}_{\nu}$ and $\mathfrak{D}_{\nu}$, in contrast with $\delta_{\nu}$ and $\mathbb{D}_{\nu}$,
are not suitable for defining Sobolev spaces associated with the corresponding settings.

For $\nu > -1$ and $1 \le p < \infty$ define
\begin{align*}
\mathcal{W}_{\nu}^p & := \{ f \in L^p(d\mu_{\nu}) : \delta_{\nu}f \in L^p(d\mu_{\nu}) \}, \\
\widetilde{\mathcal{W}}_{\nu}^p & := \{ f \in L^p(d\mu_{\nu}) : \mathfrak{d}_{\nu}f \in L^p(d\mu_{\nu}) \}, \\
\mathbb{W}_{\nu}^p & := \{ f \in L^p(dx) : \mathbb{D}_{\nu}f \in L^p(dx)\}, \\
\widetilde{\mathbb{W}}_{\nu}^p & := \{ f \in L^p(dx) : \mathfrak{D}_{\nu}f \in L^p(dx) \},
\end{align*}
where the derivatives are understood in a weak sense.
It is straightforward to see that 
\begin{equation} \label{neqSob}
\mathcal{W}_{\nu}^p \neq \widetilde{\mathcal{W}}_{\nu}^p \qquad \textrm{and} \qquad
\mathbb{W}_{\nu}^p \neq \widetilde{\mathbb{W}}_{\nu}^p.
\end{equation}
Indeed, the function $f = \chi_{(1/2,1)}$ is in the tilded spaces, but does not belong to the untilded ones.
With some more effort, with the aid of some known $L^p$ inequalities for a Hardy operator, it can be shown that
actually $\mathcal{W}_{\nu}^p \varsubsetneq \widetilde{\mathcal{W}}_{\nu}^p$
and $\mathbb{W}_{\nu}^p \varsubsetneq \widetilde{\mathbb{W}}_{\nu}^p$, but we shall not go into details here.

Recall now from Section \ref{ssec:comm_Riesz} the connections with the Jacobi setting that occur for $\nu = \pm 1/2$.
When $\nu = 1/2$ the Lebesgue measure Fourier-Bessel context coincides with the Jacobi one with parameters $\a=\b=1/2$.
When $\nu = -1/2$ both the Fourier-Bessel settings under consideration coincide with the Jacobi one with parameters
$\a=-1/2$, $\b = 1/2$. Thus from the results obtained in the Jacobi framework \cite{La1} we know that
$\mathcal{W}_{-1/2}^p$ and $\mathbb{W}_{\pm 1/2}^p$ are for $1<p<\infty$ isomorphic with the corresponding potential spaces,
and because of \eqref{neqSob} the tilded counterparts are not. 
Similar conclusions can be drawn for all $\nu \ge -1/2$ in the special case $p=2$, since then Theorem \ref{thm:isoCal}
can be transferred to the Lebesgue and natural measures Fourier-Bessel settings via the mappings $U_{\nu}$ and
$\widetilde{U}_{\nu}$, see Section \ref{sec:prel}.
All this strongly indicates that for general $\nu$ a crucial aspect of the theory, which is the isomorphism between Sobolev
and potential spaces, holds for the untilded spaces, but fails for the tilded ones.
A strict formal confirmation of this general conjecture is another interesting open problem.

\section*{Appendix: summary of notation}

For readers' convenience, in Table \ref{tab:notation} below we summarize the notation of various objects in
the contexts appearing in this paper, that is 
	\begin{itemize}
	 \item \textbf{F}ourier-\textbf{B}essel \textbf{essential} measure setting / its \textbf{prob}abilistic \textbf{variant}, 
	 \item \textbf{F}ourier-\textbf{B}essel \textbf{natural} measure setting,
	 \item \textbf{F}ourier-\textbf{B}essel \textbf{Lebesgue} measure setting,
	 \item \textbf{Jacobi} trigonometric function setting \textbf{scaled} to the interval $(0,1)$.
	 \end{itemize}
References in square brackets indicate subsections where the objects are defined.
{{
\begin{table*}[htbp]
\centering
\scalebox{0.9}{
\begin{tabular}{c|c|c|c|c|}
\cline{2-5}
 & F-B essential / prob.\ variant
 & F-B natural
 & F-B Lebesgue
 & Jacobi scaled \\ 
\cline{1-5} 
   \multicolumn{1}{|c|}{eigenfunctions} 
   & $\varphi_n^{\nu}$, $n\ge 1$ \; [\textsection \ref{FBneu}]
   & $\phi_n^{\nu}$, $n\ge 1$   \; [\textsection \ref{ssec:FBnat}]
   & $\psi_n^{\nu}$, $n\ge 1$   \; [\textsection \ref{ssec:FBleb}]
   & $\Phi_k^{\alpha,\beta}$, $k\ge 0$ \; [\textsection \ref{sec:jac}]\\
\cline{1-5} 
   \multicolumn{1}{|c|}{reference measure} 
   & $d\eta_{\nu}$ \; [\textsection\ref{FBneu}]
   & $d\mu_{\nu}$ \; [\textsection \ref{ssec:FBnat}]
   & $dx$ \; [\textsection \ref{ssec:FBleb}]
   & $dx$ \; [\textsection \ref{sec:jac}]\\
\cline{1-5} 
   \multicolumn{1}{|c|}{Laplacian} 
   & $\mathfrak{L}_{\nu}$ / $\mathfrak{L}_{\nu}^M$ \; [\textsection\ref{FBneu}]
   & $\mathcal{L}_{\nu}$ \;  [\textsection \ref{ssec:FBnat}]
   & $\mathbb{L}_{\nu}$  \; [\textsection\ref{ssec:FBleb}]
   & $\mathbb{J}_{\alpha,\beta}$ \; [\textsection\ref{sec:jac}]\\
\cline{1-5} 
   \multicolumn{1}{|c|}{derivative} 
   & $d_{\nu}$ \; [\textsection\ref{sec:ess}]
   & $\delta_{\nu}$ \; [\textsection\ref{sec:FBdernat}]
   & $\mathbb{D}_{\nu}$  \; [\textsection\ref{sec:FBderleb}]
   & $D_{\alpha,\beta}$ \; [\textsection\ref{sec:jacder}]\\ 
\cline{1-5} 
   \multicolumn{1}{|c|}{old derivative} 
   & 
   & $\mathfrak{d}_{\nu}$ \; [\textsection\ref{ssec:sob_nl}]
   & $\mathfrak{D}_{\nu}$ \; [\textsection\ref{ssec:sob_nl}]
   &  \\
\cline{1-5} 
   \multicolumn{1}{|c|}{Riesz transform} 
   & $\mathfrak{R}_{\nu}$ / $\mathfrak{R}_{\nu}^M$ \; [\textsection\ref{sec:FBriesz}]
   & 
   & 
   & \\ 
\cline{1-5} 
   \multicolumn{1}{|c|}{heat kernel} 
	 & $\mathfrak{G}_t^{\nu}(x,y)$ / $\mathfrak{G}_t^{\nu,M}(x,y)$  \; [\textsection\ref{ssec:6ess}]
   & $G_t^{\nu}(x,y)$ \; [\textsection\ref{ssec:6ess}]
   & 
   & $\mathbb{G}_t^{\alpha,\beta}(x,y)$ \; [\textsection\ref{ssec:jac_diff}] \\
\cline{1-5} 
   \multicolumn{1}{|c|}{heat semigroup} 
	 & $\mathfrak{T}_t^{\nu}$ / $\mathfrak{T}_t^{\nu,M}$ \; [\textsection\ref{ssec:6ess}]
   & 
   & 
   &  \\ 
\cline{1-5} 
   \multicolumn{1}{|c|}{modified Laplacian} 
   & $\mathfrak{M}_{\nu}$ / $\mathfrak{M}_{\nu}^M$ \; [\textsection\ref{ssec:6ess}]
   & 
   & $\mathbb{M}_{\nu}$ \; [\textsection\ref{ssec:FBL_diff}]
   & $M_{\alpha,\beta}$ \; [\textsection\ref{ssec:jac_diff}]\\ 
\cline{1-5} 
   \multicolumn{1}{|c|}{modified heat kernel} 
	 & $\mathfrak{H}_t^{\nu}(x,y)$ / $\mathfrak{H}_t^{\nu,M}(x,y)$ \; [\textsection\ref{ssec:6ess}]
   & 
   & $\mathbb{H}_t^{\nu}(x,y)$  \; [\textsection\ref{ssec:FBL_diff}]
   & $H_t^{\alpha,\beta}(x,y)$ \; [\textsection\ref{ssec:jac_diff}] \\ 
\cline{1-5} 
   \multicolumn{1}{|c|}{modified heat semigroup} 
	 & $\mathfrak{H}_t^{\nu}$ / $\mathfrak{H}_t^{\nu,M}$ \; [\textsection\ref{ssec:6ess}]
   & 
   & 
   &  \\
\cline{1-5} 
   \multicolumn{1}{|c|}{potential kernel} 
	 & $\mathfrak{K}_{\sigma}^{\nu}(x,y)$  \; [\textsection\ref{ssec:sob_ess}]
   & $\mathcal{K}^{\nu}_{\sigma}(x,y)$  \; [\textsection\ref{ssec:sob_ess}]
   & 
   &  \\
\cline{1-5} 
   \multicolumn{1}{|c|}{potential operator} 
	 & $\mathfrak{I}_{\sigma}^{\nu}$  \; [\textsection\ref{ssec:sob_ess}]
   & 
   & 
   &  \\
\cline{1-5} 
   \multicolumn{1}{|c|}{Sobolev space} 
	 & $\mathbf{W}_{\nu}^p$ \; [\textsection\ref{ssec:sob_ess}]
   & $\mathcal{W}_{\nu}^p$ \; [\textsection\ref{ssec:sob_nl}]
   & $\mathbb{W}_{\nu}^p$ \; [\textsection\ref{ssec:sob_nl}]
   &  \\ 
\cline{1-5}
\noalign{\bigskip}
\end{tabular}
}
\caption{Summary of notation.}
\label{tab:notation}
\end{table*}
}}


\end{document}